\def\N{{\mathds{N}}}
\def\R{{\mathds{R}}}
\def\eps{\varepsilon}
\def\wto{\rightharpoondown}
\newcommand{\wsto}{\overset{\raisebox{-1ex}{\scriptsize $*$}}{\rightharpoondown}}
\def\T{\mbox{\upshape T}_b}
\def\E{E}
\def\HM{\mathcal{H}}
\def\Hm{\HM^{m-1}}
\def\Lm{\mathcal{L}^m}
\def\Hdim{\mbox{\upshape dim}_{\mathcal{H}}}
\def\B{\mathcal{B}_\Lambda}
\def\Bto{\rightrightarrows}
\def\d{\mathrm{d}}
\def\x{{\times}}
\def\edge{\hspace{0.1em}\mbox{\LARGE$\llcorner$}\hspace{0.05em}}
\def\C{\mathcal{C}}
\def\F{\mathds{F}}
\def\CF{\mathcal{F}_i}
\DeclareMathOperator\Div{div}
\DeclareMathOperator\sing{sing}
\DeclareMathOperator\spt{spt}
\DeclareMathOperator\trace{tr}
\newcommand{\xint}[3]{{\setbox0=\hbox{$#1{#2#3}{\int}$}
   \vcenter{\hbox{$#2#3$}}\kern-.5\wd0}}
\newcommand{\mint}{\mathchoice
   {\xint\displaystyle\textstyle-}
   {\xint\textstyle\scriptstyle-}
   {\xint\scriptstyle\scriptscriptstyle-}
   {\xint\scriptscriptstyle\scriptscriptstyle-}
   \!\int}
\newtheorem{thm}{Theorem}[section]
\newtheorem{defn}[thm]{Definition}
\newtheorem{lem}[thm]{Lemma}
\newtheorem{prop}[thm]{Proposition}
\newtheorem{cor}[thm]{Corollary}
\newtheorem{rem}[thm]{Remark}
\begin{document}

\title[Boundary regularity for biharmonic maps]{Blow-up analysis and boundary regularity for variationally biharmonic
  maps}

\author[S. Altuntas]{Serdar Altuntas}
\address{Serdar Altuntas\\ Fakult\"at f\"ur Mathematik, 
Universit\"at Duisburg-Essen\\Thea-Leymann-Stra\ss e 9\\45127 Essen, Germany}
\email{serdar.altuntas@uni-due.de}

\author[C. Scheven]{Christoph Scheven}
\address{Christoph Scheven\\ Fakult\"at f\"ur Mathematik, 
Universit\"at Duisburg-Essen\\Thea-Leymann-Stra\ss e 9\\45127 Essen, Germany}
\email{christoph.scheven@uni-due.de}

\date{\today}
\maketitle

\begin{abstract}
  We consider critical points $u:\Omega\to N$ of the bi-energy
  \begin{equation*}
    \int_\Omega |\Delta u|^2\,\d x,
  \end{equation*}
  where $\Omega\subset\R^m$ is a bounded smooth domain of dimension
  $m\ge 5$
  and $N\subset\R^L$ a compact submanifold without boundary.
  More precisely, we consider 
  variationally biharmonic maps  $u\in W^{2,2}(\Omega,N)$, which are
  defined as critical points of the bi-energy that satisfy
  a certain stationarity condition up to the boundary.
  For weakly convergent sequences of variationally biharmonic maps, we
  demonstrate that the only obstruction that can prevent the
  strong compactness up to the boundary
  is the presence of certain non-constant
  biharmonic $4$-spheres or $4$-halfspheres in the target manifold.
  As an application, we deduce full boundary regularity of
  variationally biharmonic maps provided such spheres do not exist. 
\end{abstract}

\section{Introduction and statement of the results}

Biharmonic maps are a higher order variant of harmonic maps
$u\in C^\infty(\Omega,N)$ into a Riemannian manifold $N\subset\R^L$, which are
defined as critical points of the Dirichlet energy
\begin{equation*}
  E_1(u):=\int_\Omega |Du|^2\d x.
\end{equation*}
Analogously, we call a map $u\in C^\infty(\Omega,N)$ biharmonic if it is a
critical point of the bi-energy
\begin{equation*}
  E_2(u):=\int_\Omega |\Delta u|^2\d x.
\end{equation*}
More generally, maps $u\in W^{2,2}(\Omega,N)$ that satisfy the
Euler-Lagrange equation of $E_2$ in the weak sense are called weakly
biharmonic, and the class of weakly harmonic maps is defined accordingly. 
The analytical and geometric properties of harmonic maps have been
extensively studied over the last decades and are quite well
understood. We refer to \cite{TwoReports} for an overview over the
classical theory.
The theory of biharmonic maps, however, is not yet
developed up to the same level as the one of harmonic maps.
In the present article, we analyse the behaviour of biharmonic maps at
the boundary and investigate the questions of compactness properties
and regularity up to the boundary. 

Before discussing the state of the regularity theory for biharmonic maps,
let us briefly recall some of the main results on regularity of harmonic
maps.  
For minimizing harmonic maps, i.e. minimizers of the Dirichlet energy
in a given Dirichlet class, Schoen \& Uhlenbeck proved that the
singular set can have at most Hausdorff-dimension $m-3$, see
\cite{Schoen-U}. An alternative proof was later given by Luckhaus
\cite{Luckhaus}. Moreover, in \cite{Schoen-U2}, Schoen
\& Uhlenbeck were even able to prove full regularity in a neighbourhood
of the boundary. For harmonic maps that are not minimizing, 
only slightly weaker results are known.
First of all, no regularity results can be derived in super-critical
dimensions $m>2$ for weakly harmonic maps that do not satisfy a
certain energy monotonicity formula,
cf. \cite{Riviere-discont}.
For the slightly smaller class of stationary
harmonic maps, however, Bethuel \cite{Bethuel} established that the singular
set has vanishing $(m-2)$-dimensional Hausdorff measure, see also
\cite{Riviere,Partial-Riviere} for an alternative proof. The reduction
of the dimension to the upper bound $m-3$ as in the case of
minimizers is not known in the general situation. On a technical
level, the reason is
that weakly convergent sequences of stationary harmonic maps may not
have a strongly convergent subsequence \cite[Example 1.1]{Lin},
differently from the case of
minimizing harmonic maps \cite{Schoen-U,Luckhaus}.
Therefore, it is not possible to derive the
dimension bound for the singular set by means of Federer's dimension
reduction principle.
However, a deep result by Lin \cite{Lin} states that this lack of
compactness can occur if and only if the target manifold contains a
non-constant smooth harmonic $2$-sphere $v:S^2\to N$. As a consequence,
under the assumption that the target manifold does not carry any
non-trivial harmonic $2$-spheres, it is possible to prove full
regularity in the neighbourhood of the boundary also for more general
critical points and not only for minimizers \cite{Lin,Scheven0}.

The regularity of biharmonic maps was first investigated by Chang,
Wang \& Yang \cite{CWY}, see also
\cite{Wang_sphere,Wang4d,Wang}, with the result that for any
stationary biharmonic map, the set of interior singular points is
negligible with respect to the $(m-4)$-dimensional Hausdorff
measure. In the case of minimizing biharmonic maps, the dimension of
the interior singular set was further reduced to at most $m-5$ by the second
author \cite{Scheven1}. The latter article also contains
results for stationary biharmonic maps under the assumption that the
target manifold does not carry any non-constant Paneitz-biharmonic
$4$-spheres (cf. Definition \ref{def:paneitz-biharmonic}), which turns
out to be the analogue of the condition found by Lin \cite{Lin} in the
harmonic map case.
As for harmonic maps, an indispensable tool
for all mentioned partial regularity results in
super-critical dimensions $m>4$ is an energy monotonicity formula. For
biharmonic maps, this formula was derived in the interior case
by Angelsberg \cite{Angelsberg}.
In the boundary situation, however, the question
for the corresponding monotonicity formula remained open for some
time. In fact, since such a formula was unknown, the first results on
partial boundary regularity \cite{GongLammWang} and
full boundary regularity for minimizers \cite{Mazowiecka} had to
impose this monotonicity property as an additional assumption.
This gap in the theory has been
closed by the first author \cite{Altuntas}, who provided a suitable boundary
monotonicity formula and thereby completed the mentioned results from
\cite{GongLammWang,Mazowiecka}.

The present article now is concerned with the question whether full
boundary regularity can also be derived for biharmonic maps that are
not minimizing, but only critical points of the bi-energy. The
suitable notion of critical point in the boundary situation
is that of a variationally biharmonic map, see
Definition~\ref{def:variationally-bi}. This notion, which is slightly stronger
than that of a stationary biharmonic map, has been introduced in
\cite{Scheven0} in the harmonic map case and allows in particular to use any 
variation of the domain that keeps the boundary values fixed.
Our first main result is a 
compactness property for sequences of variationally biharmonic
maps. For the proof, we adapt the strategy from \cite{Scheven1}, which are in
turn based on \cite{Lin}, to the boundary
case. The arguments consist in an intricate blow-up analysis of the
defect measure, which detects a possible lack of strong convergence.
We achieve basically the analogous result
as in the harmonic map case, with the only
exception that additionally to the non-existence of
non-trivial Paneitz-biharmonic $4$-spheres, we also need to exclude
the existence of
non-constant Paneitz-biharmonic $4$-halfspheres with constant boundary
values. The reason is that the non-existence proof of harmonic
$2$-halfspheres from \cite{Lemaire} does not seem to carry over
to the higher order case.
However, it seems to be plausible that whenever it
is possible to exclude nontrivial Paneitz-biharmonic $4$-spheres, the
same arguments will also yield the nonexistence of the corresponding
halfspheres. An example of this principle is given in Proposition
\ref{prop:flat-torus}.

This compactness property is the prerequisite for our second main
result, which ensures the full boundary regularity of variationally
biharmonic maps under the assumption that no biharmonic $4$-spheres
and $4$-halfspheres as above exist in the target manifold.
For the proof, we follow Federer's dimension reduction argument and
analyse tangent maps of variationally biharmonic maps in singular
boundary points. For the construction of the tangent maps, it is
crucial to have the strong convergence properties from our first main
result. Since it is possible to show that all tangent maps necessarily
have to be constant, we can deduce that singular boundary points do
not exist.

Next, we specify our assumptions and state
our main results. 

\subsection{Variationally biharmonic maps}

Let $\Omega\subset\R^m$ be a bounded domain of dimension $m\ge5$. We
prescribe Dirichlet boundary data on a boundary part
$\Gamma\subset\partial\Omega$, where the boundary datum is
given in form of a map 
$g\in C^3(\Gamma_\delta,N)$ defined on a neighborhood
$\Gamma_\delta:=\{x\in\overline\Omega\,:\, \mathrm{dist}(x,\Gamma)<\delta\}$
of $\Gamma$.
We consider critical points of the bi-energy 
\begin{equation*}
  E_2(u):=\int_\Omega|\Delta u|^2\d x
  \qquad\mbox{for }u\in W^{2,2}(\Omega,N).
\end{equation*}

The following notion of weakly biharmonic maps can be derived 
by considering variations of the type $u_s(x):=\pi_N(u(x)+s V(x))$ 
with the nearest-point retraction $\pi_N$ onto $N$. 

\begin{defn}
  A map $u\in W^{2,2}(\Omega,N)$ is called weakly biharmonic iff
  \begin{equation*}
    \int_\Omega \Delta u\cdot \Delta V\,d x=0
  \end{equation*}
  holds true for every vector field 
  $V\in W^{2,2}_0\cap L^\infty(\Omega,\R^L)$ that is tangential along
  $u$ in the sense $V(x)\in T_{u(x)}N$ for a.e. $x\in\Omega$. 
\end{defn}

In other words, a weakly biharmonic map is characterized by the fact
that $\Delta^2u\perp T_uN$ holds in the weak sense. This is equivalent to the
differential equation 
\begin{equation}
  \label{weakly-bi}
  \Delta^2u
  =
  \Delta \big(A(u)(Du\otimes Du)\big)
  +2\mathrm{div}\,\big(D(\Pi(u))\cdot\Delta u\big)-\Delta(\Pi(u))\cdot\Delta u
\end{equation}
in the distributional sense, 
where $A(u)$ denotes the second fundamental form of $N\subset\R^L$ and
$\Pi(u(x)):\R^L\to T_{u(x)}N$ is the orthogonal projection onto the
tangent space at $u(x)\in N$. For a
detailed 
proof, we refer to \cite[Prop. 2.2]{Wang}.
Classical solutions $u\in C^4(\Omega,N)$ of \eqref{weakly-bi}
are called biharmonic maps. 

As mentioned above, in super-critical  dimensions $m\ge 5$
a monotonicity formula is
crucial for the derivation of regularity results. Since such a formula
can not be expected to hold for general weakly biharmonic maps, we
have to consider a stronger notion of biharmonicity.
Considering variations of the type $u_s(x)=u(x+s\xi(x))$ leads to the
following notion of biharmonic maps.
\begin{defn}\label{def:variationally-bi}
  A map $u\in W^{2,2}(\Omega,N)$ is called \emph{stationary
    biharmonic} iff it is weakly biharmonic in $\Omega$ and satisfies
  the differential equation 
  \begin{align}\label{stationary-bi-interior}
    &\int_\Omega \big(4\Delta u\cdot D^2uD\xi+2\Delta u\cdot
    Du\,\Delta\xi-|\Delta u|^2\mathrm{div}\,\xi\big)\,\d x=0
  \end{align}
  for every $\xi\in C^\infty_0(\Omega,\R^m)$. 
\end{defn}

However, it turns out that this notion is still not sufficient for the
treatment of the Dirichlet problem, since the differential
equation~\eqref{stationary-bi-interior} only contains information on
interior properties of solutions.
Therefore, we rely on the following notion of biharmonic maps that
is adapted to the Dirichlet boundary problem. 

\begin{defn}
 A map $u\in W^{2,2}(\Omega,N)$ is called \emph{variationally
   biharmonic} with respect to the Dirichlet datum $g$ on $\Gamma$ if
   \begin{equation}\label{Dirichlet-Gamma}
    (u,Du)=(g,Dg)\qquad\mbox{on $\Gamma$ in the sense of traces,}
  \end{equation}
  and if
  \begin{equation*}
    \frac{d}{ds}\bigg|_{s=0}E_2(u_s)=0
  \end{equation*}
  holds true for every variation $u_s\in W^{2,2}(\Omega,N)$,
  $s\in(-\eps,\eps)$, for which the above derivative exists, which
  satisfies $u_0=u$, the boundary condition~\eqref{Dirichlet-Gamma}
  with $u_s$ in place of $u$, and $u_s=u$ a.e. on $\Omega\setminus K$
  for some compact set $K\subset\Omega\cup\Gamma$.  
\end{defn}

\subsubsection{Paneitz-biharmonic maps}

For maps on the $k$-dimensional upper halfsphere $S^k_+$, there are corresponding notions of
biharmonicity. For the purposes of the present article, it suffices to
consider the case $k=4$. 
We prescribe boundary values in form of a map $g\in
C^3(U_\delta,N)$, where we abbreviated
$U_\delta:=S^{4}_+\cap(\R^4\times[0,\delta))$.  For maps defined on
the $4$-sphere $S^4$, we define the Paneitz-bi-energy by 
\begin{equation*}
  P_{S^4}(u):=\int_{S^4}\big[|\Delta_S u|^2+2|Du|^2\big]\d x
  \qquad\mbox{for }u\in W^{2,2}(S^k,N),
\end{equation*}
with the Laplace-Beltrami operator $\Delta_S$ on $S^4$. Analogously,
we define $P_{S^4_+}(u)$ for any $u\in W^{2,2}(S^4_+,N)$.
The Euler-Lagrange operator of this functional is given by the
Paneitz-operator $\mathcal{P}u:=\Delta_S^2u-2\Delta_S u$ on $S^4$, which plays an
important role in comformal geometry. In particular, well-known
properties of the Paneitz operator imply that the Paneitz-bi-energy
$P_{S^4}$ is conformally invariant, cf. \cite{Paneitz,Chang}. 
Critical points of $P_{S^k}$ are called
Paneitz-biharmonic maps in the following sense.
\begin{defn}\label{def:paneitz-biharmonic}
  A map $u\in C^4(S^4,N)$ is called \emph{Paneitz-biharmonic} if 
  \begin{equation}\label{paneitz-biharmonic}
    \Delta_S^2u(x)-2\Delta_Su(x)\perp T_{u(x)}N
  \end{equation}
  holds true for any $x\in S^4$.
  Analogously, a map $u\in C^4(S^4_+,N)$ is called
  \emph{Paneitz-biharmonic}
  with Dirichlet datum $g$ if $(u,Du)=(g,Dg)$ on $\partial S^4_+$ and
  \eqref{paneitz-biharmonic} holds true
  for any $x\in S^4_+$.
\end{defn}

\begin{defn}
  We say that the Riemannian manifold $N$ does not carry any
  non-constant Paneitz-biharmonic $4$-spheres if every
  Paneitz-biharmonic map $u\in C^4(S^4,N)$ is constant.

  Analogously, we say that $N$ does not carry any non-constant Paneitz-biharmonic
  $4$-halfspheres with constant boundary values if every Paneitz-biharmonic map $u\in C^4(S^4_+,N)$
  with $(u,Du)=(c,0)$ on $\partial S^4_+$ for some constant $c\in N$
  satisfies $u=c$ on $S^4_+$. 
\end{defn}

\subsection{Statement of the results}

Now we are in a position to state our main results. In all statements,
we restrict
ourselves to the case of a flat boundary, i.e. to the case that
$\Omega$ is a half ball $B_R^+$, with boundary values prescribed on the flat
part of the boundary, which we denote by $T_R$. The general case of a smooth boundary can be
reduced to this case by flattening the boundary. However, this
procedure will change the Euclidean metric to a more general
Riemannian one. Nevertheless, we decided to treat only the model case
of the Euclidean metric in order not to
overburden this work with additional technicalities.

Our first main result is the following compactness property for
bounded sequences of variationally biharmonic maps. 

\begin{thm}\label{thm:compact}
  Let $N\subset\R^L$ be a compact, smooth Riemannian manifold that
  does neither carry non-constant Paneitz-biharmonic $4$-spheres nor
  non-constant Paneitz-biharmonic $4$-halfspheres with constant boundary values. 
  Assume that $g_i\in C^\infty(B_4^+,N)$, $i\in\N$, is a sequence of boundary
  values and $u_i\in W^{2,2}(B_4^+,N)$ are
  variationally biharmonic maps with respect to the Dirichlet data
  $g_i$ on $T_4$, and that both sequences are bounded in the sense   
  \begin{equation*}
    \sup_{i\in\N}\,\|u_i\|_{W^{2,2}(B_4^+)}<\infty
    \qquad\mbox{and}\qquad
    \sup_{i\in\N}\,\|g_i\|_{C^{4,\alpha}(B_4^+)}<\infty,
  \end{equation*}
  for some $\alpha\in(0,1)$. 
  Then there is a map $u\in  W^{2,2}(B_1^+,N)$ so that after passing to
  a subsequence, we have the convergence 
  \begin{equation*}
    u_{i}\to u \qquad\mbox{in $W^{2,2}(B_{1}^+,\R^L)$, as $i\to\infty$.} 
  \end{equation*}
\end{thm}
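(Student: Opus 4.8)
The plan is to follow the blow-up strategy for the defect measure, adapted from \cite{Scheven1,Lin} to the boundary setting. First I would extract a weak limit: by the uniform $W^{2,2}$ bound, after passing to a subsequence $u_i\wto u$ weakly in $W^{2,2}(B_4^+,\R^L)$ and strongly in $W^{1,2}_{loc}$, with $u\in W^{2,2}(B_4^+,N)$ (the constraint $u(x)\in N$ passes to the limit by a.e.\ convergence). Likewise $g_i\to g$ in $C^4_{loc}$ by Arzel\`a--Ascoli applied to the $C^{4,\alpha}$-bounded sequence, and one checks that the traces $(u,Du)=(g,Dg)$ on $T_4$ are preserved. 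The key object is the \emph{defect measure} $\mu$, defined as the weak-$*$ limit of $|D^2u_i|^2\,\d x$ minus $|D^2u|^2\,\d x$ (equivalently via $|\Delta u_i|^2$, after controlling the lower-order terms by the strong $W^{1,2}_{loc}$ convergence). The goal is to show $\mu\edge B_1^+ = 0$, which upgrades $u_i\to u$ to strong convergence in $W^{2,2}(B_1^+)$.

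Next I would establish the structural properties of $\mu$. Using the interior, respectively boundary, monotonicity formula for variationally biharmonic maps (the boundary version being the one supplied by \cite{Altuntas}), one shows that the normalized bi-energy densities $\theta_i(x,r):=r^{4-m}\int_{B_r(x)\cap B_4^+}|\Delta u_i|^2$ are almost monotone in $r$, with an error controlled by the $C^4$-norm of $g_i$; passing to the limit yields an almost-monotone density $\Theta(x,r)$ for the pair $(u,\mu)$, hence a well-defined $(m-4)$-density $\Theta(x)$ for $\mu$. A small-energy $\eps_0$-regularity theorem — both the interior version of \cite{CWY} and its boundary counterpart — then shows that on the set where $\Theta<\eps_0$ one has strong convergence, so $\spt\mu$ is contained in the concentration set $\Sigma:=\{\Theta\ge\eps_0\}$, and a standard covering argument gives $\Hm(\Sigma)<\infty$, so $\mu = \Theta\,\Hm\edge\Sigma$ with $\Sigma$ $(m-4)$-rectifiable. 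The heart of the argument is then the blow-up: at $\Hm$-a.e.\ concentration point $x_0$ one rescales $u_i$ (and translates so the boundary, if $x_0\in T_4$, becomes a hyperplane through the origin) and passes to a limit; the almost-monotonicity forces the blow-up limit to be a homogeneous-of-degree-zero — hence, after restricting to a sphere, essentially a map on $S^4$ or $S^4_+$ — weakly Paneitz-biharmonic map carrying positive energy, and regularity theory promotes it to a smooth non-constant Paneitz-biharmonic $4$-sphere or $4$-halfsphere with constant boundary values. In the boundary case the constancy of the boundary trace of the blow-up is where the translation structure of the monotonicity formula is used. This contradicts the hypothesis on $N$, so $\Sigma\cap B_1^+=\emptyset$ and $\mu\edge B_1^+=0$.

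The main obstacle, and the technically heaviest part, is the blow-up analysis at boundary points: one must show that the rescaled maps converge \emph{strongly} enough (in $W^{2,2}_{loc}$ away from a lower-dimensional set) that the limit is genuinely weakly Paneitz-biharmonic rather than merely a stationary object with its own defect, and simultaneously that the limit is homogeneous — this requires iterating the blow-up (a "blow-up of the blow-up", as in \cite{Lin,Scheven1}) together with a dimension-reduction bookkeeping to land on exactly the $4$-dimensional model. A second delicate point is that the monotonicity formula near $T_4$ carries error terms from the non-flat reflection and from the boundary datum $g_i$; one needs that these errors vanish in the blow-up limit, which uses the $C^{4,\alpha}$-bound on $g_i$ to ensure $g_i$ rescales to a constant. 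Once these points are in place, the remaining steps — the $\eps_0$-regularity, the covering estimates for $\Hm(\Sigma)$, and the final contradiction — are by now standard adaptations of the interior theory and of \cite{Scheven0} in the harmonic case.
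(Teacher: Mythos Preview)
Your overall architecture --- extract a defect measure, analyse its structure via monotonicity and $\eps$-regularity, then blow up to produce a forbidden object --- matches the paper. The gap is in how the $4$-sphere or $4$-halfsphere actually appears. You describe the blow-up limit as a \emph{non-constant} homogeneous-of-degree-zero map which, restricted to a sphere, is ``essentially a map on $S^4$ or $S^4_+$''. Two things go wrong here. First, the initial blow-up at a generic concentration point does not yield a non-constant homogeneous map carrying the energy: at $\mathcal{H}^{m-4}$-a.e.\ point of $\Sigma$ the map part of the rescaled limit is \emph{constant} (this is exactly the generic density property you invoke), and all the energy sits in a flat defect measure supported on an $(m-4)$-plane (cf.\ Lemma~\ref{blow-up}). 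Second, even if you had a homogeneous map on $\R^m_+$, restricting to the unit sphere gives a map on $S^{m-1}_+$, not on $S^4_+$; there is no ``restriction'' that bridges the dimensions, and Federer-type dimension reduction applies to tangent maps of a single map, not to the defect of a sequence.

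The paper's resolution (Theorem~\ref{sphere}) is different and more delicate. After the first blow-up one has a sequence $u_i$ converging to a constant with flat defect on $\R^{m-4}\times\{0\}$. One then performs a \emph{second} rescaling $v_i(y)=u_i(p_i+\delta_i y)$ at carefully chosen points $p_i=(p_i',p_i'')$ and scales $\delta_i\searrow 0$: the slice $p_i'$ is selected by a maximal-function argument so that the energy of the tangential derivatives $\partial_1 u_i,\dots,\partial_{m-4}u_i$ is small, the scale $\delta_i$ is fixed by an intermediate-value argument so the rescaled energy hits exactly the level $2^{1-m}\eps_0$, and the transverse point $p_i''$ realizes the maximum. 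The stationary identity~\eqref{stationary-bi} (not the monotonicity formula) is then used to show the limit $\hat v$ is constant in the $\R^{m-4}$-directions, so $\hat v(y',y'')=v(y'')$ for a genuinely $4$-dimensional, non-constant biharmonic $v\colon\R^4_+\to N$ (or $\R^4\to N$, depending on whether $\delta_i^{-1}p_{i,m}$ stays bounded) with $D^2v\in L^2$. The passage to $S^4_+$ is via stereographic projection and the conformal invariance of the Paneitz bi-energy, not via homogeneity. Your sketch would need these ingredients --- the Lin-type point/scale selection and the stationary-equation argument for constancy in the flat directions --- to close.
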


\begin{rem}
  \textup{The assumption on the non-existence of Paneitz-biharmonic
    $4$-spheres and $4$-halfspheres is necessary in the following
    sense. Assume that there is a non-constant Paneitz-biharmonic
    $4$-halfsphere with constant boundary values. Then, by means of
    stereographic projection and the conformal invariance of the
    Paneitz-bienergy, we infer a non-constant biharmonic map $u\in
    C^\infty(\R^4_+,N)$ of finite bi-energy
    with $(u,Du)=(c,0)$ on $\partial\R^4_+$. This
    map gives rise to the sequence of rescaled biharmonic maps
    \begin{equation*}
      u_i\in C^\infty(B_1^{4,+},N),\quad
      u_i(x):=u(\lambda_ix)
      \ \mbox{for }x\in B_1^{4,+}\subset\R^4_+\,,
 \end{equation*}
for any sequence $\lambda_i\to\infty$, with the abbreviation $B_r^{4,+}$ for the
four-dimensional upper halfball of radius $r>0$.
For this sequence of biharmonic maps, we have 
\begin{equation*}
  \sup_{i\in\N}\int_{B_1^{4,+}}|\Delta u_i|^2\,dx
  =
  \sup_{i\in\N}\int_{B_{\lambda_i}^{4,+}}|\Delta u|^2\,dx
  =
  \int_{\R^4_+}|\Delta u|^2\,dx<\infty,
\end{equation*}
which implies $\sup_{i\in\N}\|u_i\|_{W^{2,2}(B_1^{4,+})}<\infty$ by
standard $L^2$-estimates for the Laplace operator.
Moreover, for an arbitrary map 
$\varphi\in C^0(\overline{B_1^{4,+}})$ with compact support, we compute
\begin{equation*}
  \int_{B_1^{4,+}}|\Delta u_i(x)|^2\varphi(x)\,\d x
  =
  \int_{B_{\lambda_i}^{4,+}}|\Delta u(y)|^2\varphi(\lambda_i^{-1}y)\,\d y
  \to
  \varphi(0)\int_{\R^4_+}|\Delta u|^2\,\d y,
\end{equation*}
which corresponds to the convergence of measures $\Lm\edge|\Delta
v_i|^2\wto c\delta_0$ in the limit $i\to\infty$, with the Dirac
measure $\delta_0$ and the constant $c=\int_{\R^4_+}|\Delta u|^2\d
y>0$. The latter convergence contradicts the subconvergence of the sequence
$u_i$ with respect to the $W^{2,2}$-norm.\\
Analogously, the existence of a non-constant Paneitz-biharmonic
$4$-sphere yields a sequence of biharmonic maps $u_i$ on the full ball
$B_1^4\subset\R^4$ that does not contain a strongly convergent subsequence.
This demonstrates the necessity of the assumptions on
Paneitz-biharmonic spheres and halfspheres for our compactness result.
}
\end{rem}

The preceding compactness result is the crucial step for the derivation of the
full boundary regularity for variationally biharmonic maps.

\begin{thm}\label{thm:boundary-regularity}
  Let $N\subset\R^L$ be a smooth compact Riemannian manifold that does
  neither carry a non-constant Paneitz-biharmonic $4$-sphere nor a
  non-constant Paneitz-biharmonic $4$-halfsphere with constant
  boundary values. 
  Assume that the map $u\in W^{2,2}(B_1^+,N)$ is variationally biharmonic
  with $(u,Du)=(g,Dg)$ on $T_1$ in the sense of traces,
  for Dirichlet values $g\in
  C^\infty(B_1^+,N)$. Then $u$ is smooth in a full neighbourhood of
  $T_1$. 
\end{thm}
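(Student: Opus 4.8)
The plan is to combine Federer's dimension reduction argument with the compactness result of Theorem~\ref{thm:compact}, following the strategy used in the harmonic map case in \cite{Lin,Scheven0} and its biharmonic interior analogue in \cite{Scheven1}. First I would recall the partial boundary regularity result that is already available: by \cite{GongLammWang} together with the boundary monotonicity formula of \cite{Altuntas}, a variationally biharmonic map $u$ is smooth in a relative neighbourhood of every boundary point $x_0\in T_1$ at which the rescaled bi-energy $r^{4-m}\int_{B_r^+(x_0)}|\Delta u|^2\,\d x$ is sufficiently small; hence the boundary singular set $\mathrm{sing}_b\,u\subset T_1$ is relatively closed and has vanishing $(m-4)$-dimensional Hausdorff measure. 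The goal is to improve this to $\mathrm{sing}_b\,u=\emptyset$, which by Federer's principle reduces to showing that every tangent map of $u$ at a boundary singular point is constant.

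The key step is the construction and analysis of tangent maps. For $x_0\in\mathrm{sing}_b\,u$ one forms the blow-up sequence $u_{x_0,\lambda_i}(x):=u(x_0+\lambda_i x)$ on half-balls, with $\lambda_i\downarrow 0$. The boundary monotonicity formula bounds the scaled energies uniformly, and the rescaled Dirichlet data $g_{x_0,\lambda_i}$ converge in $C^{4,\alpha}_{loc}$ to a constant $c\in N$ (since $g\in C^\infty$). Thus the hypotheses of Theorem~\ref{thm:compact} are met on every fixed half-ball, and after passing to a subsequence $u_{x_0,\lambda_i}\to\varphi$ strongly in $W^{2,2}_{loc}(\R^m_+,\R^L)$. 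Strong convergence is exactly what is needed to pass the variationally biharmonic equation, the interior stationarity identity~\eqref{stationary-bi-interior}, and the boundary monotonicity formula to the limit: the defect measure vanishes by Theorem~\ref{thm:compact}, so no energy is lost. Consequently $\varphi\in W^{2,2}(\R^m_+,N)$ is a variationally biharmonic map with constant boundary data $(\varphi,D\varphi)=(c,0)$ on $\partial\R^m_+$, it has finite scaled energy, and—again by strong convergence and the monotonicity formula—it is \emph{homogeneous of degree zero}, i.e. $\varphi(x)=\psi(x/|x|)$ for some $\psi$ defined on the upper half-sphere $S^{m-1}_+$.

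It then remains to show that such a zero-homogeneous boundary tangent map must be constant, which is where the hypothesis on Paneitz-biharmonic $4$-halfspheres enters. I would argue by an induction on the dimension $m$, passing to the link: a zero-homogeneous variationally biharmonic map with constant boundary values on $\R^m_+$ restricts to a Paneitz-biharmonic map on $S^{m-1}_+$ with constant boundary data on $\partial S^{m-1}_+$ (this is the $W^{2,2}$-analogue of the classical computation relating $\Delta$ on the cone to the Paneitz operator $\mathcal P=\Delta_S^2-2\Delta_S$ on the link, cf. the discussion of conformal invariance above). One also needs to know that the singular set of $\varphi$, being a cone, is either empty or meets $S^{m-1}_+$; a further dimension reduction on $S^{m-1}_+$ (together with the interior non-existence hypothesis on Paneitz-biharmonic $4$-spheres, which handles interior singularities of the link, and the smooth boundary regularity near $\partial S^{m-1}_+$ coming from \cite{GongLammWang,Altuntas}) shows $\varphi$ is smooth away from the origin, hence smooth on $S^{m-1}_+$. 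For $m=5$ the link $S^4_+$ is four-dimensional and the hypothesis directly forces $\psi\equiv c$; for $m\ge 6$ the smoothness plus zero-homogeneity plus the inductive hypothesis that no non-constant boundary tangent maps exist in lower dimensions forces $\psi$ to be independent of the spherical variable, hence constant. Since all boundary tangent maps are constant, Federer dimension reduction yields $\Hdim(\mathrm{sing}_b\,u)\le \max\{0,m-5-1\}$ and in fact $\mathrm{sing}_b\,u=\emptyset$, so $u$ is smooth in a full neighbourhood of $T_1$.

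The main obstacle I expect is establishing the zero-homogeneity and the correct cone structure of the boundary tangent map in the $W^{2,2}$ setting: unlike the first-order case, the second-order monotonicity formula of \cite{Altuntas} yields a monotone quantity whose vanishing derivative must be translated into the precise statement that $\varphi$ is a homogeneous-degree-zero cone solving the Paneitz equation on the link, and the identification of the link equation requires care because the strong $W^{2,2}$-convergence, while enough to pass the weak equation, must be upgraded to control the restriction to $S^{m-1}_+$ and to rule out concentration of the defect measure on the boundary sphere $\partial S^{m-1}_+$—which is exactly the point at which both the $4$-sphere and the $4$-halfsphere hypotheses are genuinely used.
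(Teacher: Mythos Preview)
Your overall strategy---Federer dimension reduction combined with Theorem~\ref{thm:compact} to produce zero-homogeneous tangent maps with constant boundary data---is the right one and matches the paper, and your treatment of the base case $m=5$ via the $4$-halfsphere hypothesis is correct. The gap is in the step $m\ge 6$. You write that ``smoothness plus zero-homogeneity plus the inductive hypothesis that no non-constant boundary tangent maps exist in lower dimensions forces $\psi$ to be independent of the spherical variable, hence constant,'' but this inference is unjustified. The link map $\psi$ satisfies a Paneitz-type equation on $S^{m-1}_+$, which for $m\neq 5$ is \emph{not} the biharmonic equation on $\R^{m-1}_+$; an inductive hypothesis about boundary tangent maps in dimension $m-1$ therefore says nothing about $\psi$. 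Even granting that $\psi$ is smooth on $S^{m-1}_+$, nothing you have written rules out a non-constant smooth solution there---that is precisely the missing Liouville theorem, and the hypotheses of Theorem~\ref{thm:boundary-regularity} concern only $S^4$ and $S^4_+$, not higher-dimensional links.

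The paper closes this gap without any induction on $m$ and without working on the curved link. Instead it stays in Euclidean space and iterates the tangent-map construction in the standard Federer way: blowing up $v_0=\varphi$ at a singular boundary point $b\in\sing v_0\cap T_1\setminus\{0\}$ yields a tangent map $v_1$ that is again zero-homogeneous and in addition satisfies $\partial_b v_1\equiv 0$. After $d$ iterations one arrives at a map $v_d$ invariant along a $d$-plane in $\partial\R^m_+$ whose boundary singular set equals exactly that plane; the restriction $\tilde v:=v_d|_{\{0\}\times\R^{m-d}_+}$ is then zero-homogeneous, stationary biharmonic on $\R^{m-d}_+$ with constant boundary values, and smooth near $\partial\R^{m-d}_+\setminus\{0\}$. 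The decisive new ingredient is a direct Pohozaev-type Liouville theorem (Theorem~\ref{thm:Liouville}): testing the stationarity identity~\eqref{stationary-bi-interior} with $\xi(x)=\eta(|x|)\,e_m$ and using homogeneity produces, after integration by parts, the inequality
\[
(m-d-5)\int_0^\infty r^{m-d-6}\eta(r)\,\d r\int_{S_1^+}x_{m-d}\big[|\Delta\tilde v|^2+(m-d-3)|D\tilde v|^2\big]\,d\mathcal H^{m-d-1}\le 0,
\]
which forces $D\tilde v\equiv 0$ whenever $m-d\ge 6$. The $4$-halfsphere hypothesis is invoked only in the bottom case $m-d=5$. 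This Pohozaev identity is the idea your proposal is missing.
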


\subsection{Plan of the paper}
In Section~\ref{sec:preliminaries}, we gather some
technical tools that will be crucial for our arguments. In
particular, we derive a Morrey space estimate that is a consequence of
a boundary monotonicity formula, and we recall some partial regularity
results for variationally biharmonic maps. Moreover, we prove a
gradient estimate under a smallness assumption on the energy.

Section~\ref{sec:compact} is then devoted to the proof of our
compactness result. We introduce the notion of defect measure for a
sequence of variationally biharmonic maps and analyse in what sense
this measure detects the lack of strong convergence. We show that if a
nontrivial defect measure exists, then a blow-up
procedure yields a flat defect measure that is supported on an
$(m-4)$-dimensional plane. This means that we can find a sequence of
variationally biharmonic maps that converge strongly away from this
plane. In this more controlled situation, it is then possible to
follow the ideas by Lin \cite{Lin} and to construct a suitable
blow-up sequence around carefully chosen blow-up points. Depending on
whether these points approach the boundary or not, we can show that the
limit gives rise to a non-constant Paneitz-biharmonic $4$-halfsphere
with constant boundary values or to a corresponding full
$4$-sphere. Since the existence of such maps is excluded by
assumption, we deduce the desired strong compactness for any bounded
sequence of variationally biharmonic maps.

The next Section~\ref{sec:liouville} contains some Liouville type
results for biharmonic maps on half-spaces that arise as tangent maps
of biharmonic maps in boundary points. Since we can show that all
possible tangent maps are constant, 
the implementation of Federer's dimension reduction principle
in Section~\ref{sec:dim-red} allows us to deduce our second main result on the
full boundary regularity of variationally biharmonic maps.\\ 

\noindent
\textbf{Acknowledgments. }This work has been supported by the
DFG-project SCHE 1949/1-1 ``Randregularit\"at biharmonischer Abbildungen
zwischen Riemann'schen Mannigfaltigkeiten''.

\section{Preliminaries}
\label{sec:preliminaries}

\subsection{Notation}

We write $B_r(a)\subset\R^m$ for the open ball with radius $r>0$ and
center $a\in\R^m$ and $S_r(a):=\partial B_r(a)$ for the corresponding
sphere.
For the upper halfspace, we use the abbreviation
$\R^m_+:=\R^{m-1}\times[0,\infty)$, and
write $B_r^+(a):=B_r(a)\cap\R^m_+$ for arbitrary centers $a\in\R^m_+$.
Moreover, in the case
of a center $a=(a',0)\in\partial\R^m_+$ we use the
abbreviations
$S_r^+(a):=S_r(a)\cap\R^m_+$ for the curved part and 
$T_r(a):=B_r(a')\times\{0\}$ for the flat part of the
boundary of $B_r^+(a)$.  If the center is clear
from the context, we will often omit it in the notation and simply
write $B_r$, $B_r^+$, $S_r$, $S_r^+$, $T_r$ instead of $B_r(a)$, $B_r^+(a)$,
$S_r(a)$, $S_r^+(a)$, $T_r(a)$.

For the Lebesgue measure on $\R^m$ we write $\mathcal{L}^m$, and
for $0\le k\le m$, the $k$-dimensional Hausdorff measure on $\R^m$
will be abbreviated by $\mathcal{H}^k$.

For $f\in L^2(B_2^+)$ and $\lambda>0$, we write 
\begin{equation*}
  \|f\|_{L^{2,\lambda}(B_2^+)}^2
  :=
  \sup_{B_\rho^+(y)\subset
    B_2^+}\frac1{\rho^\lambda}\int_{B_\rho^+(y)}|f|^2\,\d x.
\end{equation*}

Finally, the singular set of a map $u:\Omega\to\R^L$ is defined by
\begin{equation*}
  \sing(u):=\overline\Omega\setminus \left\{x\in\overline\Omega\,:\, u\in
    C^\infty(B_\rho(x)\cap\overline\Omega,\R^L)\mbox{ for some }\rho>0 \right\}.
\end{equation*}

\subsection{Monotonicity formula and consequences}

The proof of the following lemma can be retrieved from \cite[Lemma
2.1]{Altuntas}. 

\begin{lem}
  Let $u\in W^{2,2}(\Omega,N)$ be a variationally biharmonic map
  with respect to the Dirichlet datum $g$ on
  $\Gamma\subset\partial\Omega$. Then for every $\xi\in
  C^\infty(\Omega\cup\Gamma,\R^m)$ with $\xi(x)\in T_x(\partial\Omega)$
  for every $x\in\Gamma$ and $\spt\xi\Subset\Omega\cup\Gamma$, we have
  \begin{align}\label{stationary-bi}
    &\int_\Omega \big(4\Delta u\cdot D^2uD\xi+2\Delta u\cdot
    Du\,\Delta\xi-|\Delta u|^2\mathrm{div}\,\xi\big)\,\d x\\\nonumber
    &\quad=
    \int_\Omega 2\Delta u\cdot \Delta \big[\Pi(u)(Dg\,\xi)\big]\,\d x,
  \end{align}
  with the orthogonal projection $\Pi(u(x)):\R^L\to T_{u(x)}N$. 
\end{lem}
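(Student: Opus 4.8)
The plan is to prove~\eqref{stationary-bi} by exhibiting, for a given tangential field $\xi$, an admissible competitor for the variational characterization of $u$ in Definition~\ref{def:variationally-bi} which realizes an inner (domain) variation along $\xi$, suitably corrected so as not to disturb the Dirichlet data on $\Gamma$, and then computing the first variation of the bi-energy along it. Let $\Phi_s$ be the flow of $\xi$, i.e.\ $\partial_s\Phi_s=\xi\circ\Phi_s$ with $\Phi_0=\mathrm{id}$. Since $\xi(x)\in T_x(\partial\Omega)$ for $x\in\Gamma$ and $\spt\xi\Subset\Omega\cup\Gamma$, for $|s|$ small $\Phi_s\colon\overline\Omega\to\overline\Omega$ is a smooth diffeomorphism mapping $\Gamma$ onto $\Gamma$ and coinciding with the identity both near $\partial\Omega\setminus\Gamma$ and outside a compact subset of $\Omega\cup\Gamma$. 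I would then consider
\begin{equation*}
  u_s:=\pi_N\big(u\circ\Phi_s+g-g\circ\Phi_s\big),
\end{equation*}
which is well defined for $|s|$ small because its argument stays in a tubular neighbourhood of $N$; it is the pure inner variation $u\circ\Phi_s$ corrected by the term $g-g\circ\Phi_s$, which vanishes on $\Gamma$ in exactly the way needed to keep the boundary values fixed.

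The first step is to check that $u_s$ is an admissible variation. On $\Gamma$ we have $\Phi_s(x)\in\Gamma$, hence $u\circ\Phi_s=g\circ\Phi_s$ there by the trace condition~\eqref{Dirichlet-Gamma} for $u$, so the argument of $\pi_N$ equals $g$ and thus $u_s=g$ on $\Gamma$. Differentiating and using $Du=Dg$ on $\Gamma$ in the sense of traces together with $D\pi_N(p)=\Pi(p)$ for $p\in N$ and $Dg\in T_gN$, one also obtains $Du_s=Dg$ on $\Gamma$, so $u_s$ satisfies~\eqref{Dirichlet-Gamma}; moreover $u_0=u$ and $u_s=u$ outside a compact subset of $\Omega\cup\Gamma$. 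It remains to ensure $u_s\in W^{2,2}(\Omega,N)$ and that $s\mapsto E_2(u_s)$ is differentiable at $s=0$. The delicate point is that $\Delta\pi_N(w)$ contains a term of the form $D^2\pi_N(w)(Dw\otimes Dw)$ which, for $m\ge5$, is only borderline in $L^2$. This is exactly where the Morrey space estimate from Section~\ref{sec:preliminaries} is needed: it gives $Du\in L^4_{\mathrm{loc}}(\Omega\cup\Gamma)$, hence $Dw\otimes Dw\in L^2_{\mathrm{loc}}$, so that $u_s\in W^{2,2}(\Omega,N)$ with finite bi-energy and differentiation under the integral is legitimate.

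With $u_s$ admissible, the variational characterization yields $\frac{d}{ds}\big|_{s=0}E_2(u_s)=0$, and it remains to identify this derivative with the difference of the two sides of~\eqref{stationary-bi}. I would split it into the inner part coming from $u\circ\Phi_s$ and the target-correction part. For the inner part, changing variables $y=\Phi_s(x)$ one writes $E_2(u\circ\Phi_s)=\int_\Omega|\mathcal L_su|^2J_s\,\d x$ with a second order operator $\mathcal L_s$ with smooth coefficients depending smoothly on $s$, $\mathcal L_0=\Delta$, $J_0=1$; differentiating at $s=0$ (unproblematic since $u$ is fixed) and integrating by parts in the smooth field $\xi$ produces the classical inner-variation expression for the bi-energy,
\begin{equation*}
  \int_\Omega\big(4\Delta u\cdot D^2uD\xi+2\Delta u\cdot Du\,\Delta\xi-|\Delta u|^2\Div\xi\big)\,\d x
  +\int_\Gamma|\Delta u|^2(\xi\cdot\nu)\,\d\Hm,
\end{equation*}
and the boundary integral vanishes because $\xi$ is tangential along $\Gamma$ while compactly supported away from $\partial\Omega\setminus\Gamma$. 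For the correction part, from $\partial_s(u\circ\Phi_s+g-g\circ\Phi_s)\big|_{s=0}=Du\,\xi-Dg\,\xi$, the relation $D\pi_N(u)=\Pi(u)$ and $Du\,\xi\in T_uN$, one reads off the variation field $\frac{d}{ds}\big|_{s=0}u_s=Du\,\xi-\Pi(u)(Dg\,\xi)$, whence the correction contributes $-\int_\Omega2\Delta u\cdot\Delta\big[\Pi(u)(Dg\,\xi)\big]\,\d x$ to the first variation (the second-order products here being controlled again by $Du\in L^4_{\mathrm{loc}}$). Adding the two contributions and using that the total is zero gives precisely~\eqref{stationary-bi}.

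I expect the main obstacle to be not the bookkeeping of the inner-variation formula but the fact that, in the supercritical range $m\ge5$, a $W^{2,2}$ map composed with the nonlinear retraction $\pi_N$ need not be $W^{2,2}$ again; hence producing an admissible competitor and justifying the differentiation of $s\mapsto E_2(u_s)$ genuinely rely on the improved gradient integrability $Du\in L^4_{\mathrm{loc}}$ coming from the boundary monotonicity formula. Once that is in place, the remaining steps---verifying that the $\Gamma$-boundary term vanishes by tangentiality and collecting the terms produced by the inner and target variations---are routine.
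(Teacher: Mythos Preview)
Your overall strategy---building an admissible variation $u_s=\pi_N(u\circ\Phi_s+g-g\circ\Phi_s)$ from the flow of a tangential field $\xi$, checking that it preserves the Dirichlet data on $\Gamma$, and identifying the first variation as the sum of an inner-variation term and a correction involving $\Pi(u)(Dg\,\xi)$---is the natural one and is essentially what the reference \cite{Altuntas} (to which the paper defers for the proof) carries out.

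There is, however, a genuine circularity in your justification. You invoke ``the Morrey space estimate from Section~\ref{sec:preliminaries}'' to obtain $Du\in L^4_{\mathrm{loc}}$, but Lemma~\ref{lemma:morrey} is derived from the monotonicity formula of Theorem~\ref{thm:monotonicity}, and the paper explicitly states that the present lemma ``is the first step in the derivation of a boundary monotonicity formula.'' So you cannot use Lemma~\ref{lemma:morrey} here. Moreover, the Morrey bound $Du\in L^{2,m-2}$ does not by itself give $Du\in L^4$. The correct---and logically prior---source of $Du\in L^4$ is the Gagliardo--Nirenberg interpolation $\|Du\|_{L^4}^2\le c\,\|u\|_{L^\infty}\|D^2u\|_{L^2}$, which applies because $u\in W^{2,2}\cap L^\infty$ (the latter since $N$ is compact). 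This is precisely the argument the paper itself uses later (cf.\ the proof of Lemma~\ref{defect}\,(ii)), and it removes the circularity entirely.

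A second, smaller point: the boundary term $\int_\Gamma|\Delta u|^2(\xi\cdot\nu)\,\d\Hm$ you write in the inner-variation step is not well defined for a $W^{2,2}$ map, and in fact does not arise if you derive the inner variation by first changing variables $y=\Phi_s(x)$ (which maps $\Omega$ onto $\Omega$ since $\xi$ is tangential on $\Gamma$) and only then differentiating in $s$. This keeps $u$ fixed and puts all $s$-dependence into smooth coefficients, so no integration by parts in $u$ is needed. The same trick makes your ``split'' rigorous: after the substitution one has $E_2(u_s)=\int_\Omega|\mathcal L_s[\pi_N(u+\tilde h_s)]|^2J_s\,\d y$ with smooth $\tilde h_s$ satisfying $\tilde h_0=0$, and differentiating at $s=0$ yields exactly the two groups of terms you describe without ever needing to make sense of $\Delta(Du\,\xi)$.
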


The preceding lemma is the first step in the derivation of a boundary
monotonicity formula, which has been proven in 
\cite{Altuntas} for the case of a flat boundary.
More precisely, we specialize to the case 
$\Omega=B_R^+(a)$ and $\Gamma=T_R(a)$.

\begin{thm}{\upshape (\cite[Theorem 1.2]{Altuntas})}\label{thm:monotonicity}
  Let $m\ge 5$, $0<R\le1$
  and assume that
  $u\in W^{2,2}(B_R^+(a),N)$ satisfies
  \eqref{stationary-bi}. Then, for a.e. radii
  $0<\rho<r<R$, we have the monotonicity formula
  \begin{align}\label{bdry-monotonicity}
    &\Phi_u(a;\rho)
    +
    4\int_{B_r^+(a)\setminus B_\rho^+(a)}e^{\chi|x-a|}\bigg(\frac{|D\partial_Xu|^2}{|x-a|^{m-2}}+(m-2)\frac{|\partial_Xu|^2}{|x-a|^m}\bigg)\,\d x\\\nonumber
    &\quad\le
    \Phi_u(a;r)
    +
    K\Psi_u(a;\rho,r),    
  \end{align}
  where we abbreviated
  \begin{align}\label{Def-Phi}
    \Phi_u(a;r)&:=e^{\chi r}r^{4-m}\int_{B_r^+(a)}|\Delta u|^2\,\d x\\\nonumber
    &\qquad+ e^{\chi
      r}r^{3-m}\int_{S_r^+(a)}\big(\partial_X|Du|^2+4|Du|^2-4r^{-2}|\partial_Xu|^2\big)\,d\mathcal{H}^{m-1},
  \end{align}
  with the short-hand notation $\partial_X:=(x_i-a_i)\partial_i$, and
  \begin{align}\label{Def-Psi}
    \Psi_u(a;\rho,r)
    &:=r+\int_{B_r^+(a)\setminus
      B_\rho^+(a)}\bigg(\frac{|D^2u|^2}{|x-a|^{m-5}}+\frac{|Du|^2}{|x-a|^{m-3}}\bigg)\,\d x\\\nonumber
    &\qquad+
    \int_{S_r^+(a)\cup S_\rho^+(a)}\frac{|D^2u|^2}{|x-a|^{m-6}}\,d\mathcal{H}^{m-1}.
  \end{align}
  In the above formula, the constants $\chi,K\ge0$ depend only on the data
  $m,N$ and $\|Dg\|_{C^2}$. In particular, the constants
  $\chi$ and $K$ vanish in the limit $\|Dg\|_{C^2}\to0$. 
\end{thm}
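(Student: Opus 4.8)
The plan is to derive the monotonicity formula \eqref{bdry-monotonicity} from the stationarity identity \eqref{stationary-bi} by testing it with a family of radial vector fields adapted to the flat boundary. For $0<r<R$ I would insert into \eqref{stationary-bi} the field $\xi_r(x):=\zeta\big(|x-a|/r\big)(x-a)$, where $\zeta\in C^\infty([0,\infty))$ equals $1$ near $0$ and vanishes on $[1,\infty)$, and then let $\zeta$ converge to the indicator of $[0,1)$, which, by a standard coarea/Fubini argument, turns the terms concentrated near $|x-a|=r$ into surface integrals over $S_r^+(a)$ for a.e.\ radius $r$. What makes this admissible at the boundary is that $a=(a',0)$ lies on $\{x_m=0\}$, so the last component of $x-a$ is $x_m$; hence $\xi_r$ is tangential along $T_R(a)$ as required by \eqref{stationary-bi}, and moreover the contributions of the integrations by parts over the flat part $T_r(a)$ drop out. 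For interior centers this reduces to Angelsberg's interior formula.

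For this choice $D\xi_r$ is the identity plus a term supported on the shell $\{|x-a|\sim r\}$ and $\Div\xi_r\approx m$ on $B_r^+(a)$. Expanding the three terms on the left-hand side of \eqref{stationary-bi} and integrating by parts -- this is the higher-order Pohozaev computation, and is the computational heart of the argument -- the bulk contributions, after multiplication by the integrating factor $e^{\chi r}r^{4-m}$, recombine into $\tfrac{\d}{\d r}\Phi_u(a;r)$ with $\Phi_u$ as in \eqref{Def-Phi}; the contributions concentrated near $S_r^+(a)$ produce, on the one hand, the surface part of $\Phi_u$ and, on the other, the nonnegative bulk term $4\int_{B_r^+\setminus B_\rho^+}e^{\chi|x-a|}\big(|D\partial_Xu|^2|x-a|^{2-m}+(m-2)|\partial_Xu|^2|x-a|^{-m}\big)\,\d x$ on the left of \eqref{bdry-monotonicity}. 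The exponential weight $e^{\chi r}$ and the additive $r$ in $\Psi_u$ are introduced exactly to absorb those bulk pieces that fail to be sign-definite but are of genuinely lower order.

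It then remains to estimate the right-hand side of \eqref{stationary-bi} with $\xi=\xi_r$. Expanding $\Delta\big[\Pi(u)(Dg\,\xi_r)\big]$ by the product rule gives a sum of terms pairing derivatives of $\Pi$ up to second order (bounded in terms of $N$) with $D^2g,Dg$ (bounded by $\|Dg\|_{C^2}$), with $D^2u,Du$, and with $\xi_r$ and its first two derivatives, the latter of size $O(|x-a|),\,O(1),\,O(r^{-1})$ on $\{|x-a|\sim r\}$. Pairing each such term with the factor $2\Delta u$, applying Cauchy--Schwarz with the appropriate powers of $|x-a|$, and integrating the per-radius estimate over $(\rho,r)$, one bounds the total error by $K\,\Psi_u(a;\rho,r)$ with $\Psi_u$ as in \eqref{Def-Psi} and $K=C(m,N)\,\|Dg\|_{C^2}$, so that $K\to0$ as $\|Dg\|_{C^2}\to0$; the error terms localized near the endpoint radii produce the surface integrals over $S_r^+(a)\cup S_\rho^+(a)$ in $\Psi_u$. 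Integrating the resulting differential inequality over $(\rho,r)$ then gives \eqref{bdry-monotonicity}.

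The main obstacle I expect is the bookkeeping of the boundary contributions: one must check carefully that every surface integral over the flat part $T_r(a)$ vanishes by the tangency (the $m$-th component of $\xi_r$ is zero on $\{x_m=0\}$) or else recombines into a controlled quantity, and that $\Psi_u$ is genuinely of lower order, i.e.\ that its weights $|x-a|^{-(m-5)},|x-a|^{-(m-3)},|x-a|^{-(m-6)}$ are one power milder than the weights $|x-a|^{-(m-4)},|x-a|^{-(m-2)},|x-a|^{-(m-3)}$ entering $\Phi_u$ and the good term, so that the formula has content as $\rho\to0$. The approximation needed to make the surface integrals meaningful for a.e.\ $r$ is routine but delicate because $D^2u$ is only $L^2$, so one should first establish the identity for mollified maps and pass to the limit along a suitable sequence of radii.
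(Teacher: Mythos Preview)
The paper does not actually prove this theorem: it is quoted verbatim as \cite[Theorem~1.2]{Altuntas} and used as a black box, with no proof or sketch given in the present text. So there is nothing to compare your proposal against here.

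That said, your outline is the standard route to such boundary monotonicity formulae and matches what one expects in the cited reference: testing the stationarity identity \eqref{stationary-bi} with the tangential radial field $\xi_r(x)=\zeta(|x-a|/r)(x-a)$, performing the biharmonic Pohozaev computation to produce $\tfrac{\d}{\d r}\Phi_u(a;r)$ together with the nonnegative annular integral, and estimating the right-hand side $2\int\Delta u\cdot\Delta[\Pi(u)(Dg\,\xi_r)]\,\d x$ by Cauchy--Schwarz to obtain the error $K\Psi_u$ with $K=C(m,N)\|Dg\|_{C^2}$. Your identification of the role of the exponential weight $e^{\chi r}$ (to absorb non-sign-definite lower-order pieces) and of the tangency $\xi_r\cdot e_m=x_m=0$ on $T_R(a)$ (which kills the flat-boundary contributions) is correct. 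The caveats you flag---bookkeeping of the surface terms on $S_r^+(a)$ and the a.e.\ meaning of the sphere integrals for $W^{2,2}$ maps---are exactly the technical points one has to track, but none of them is an obstruction.
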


An important consequence of the preceding monotonicity formula is the
following Morrey space estimate.

\begin{lem}\label{lemma:morrey}
  Let $m\ge 5$ and $g\in C^3(B_1^+,N)$ be given.
  Assume that $u\in W^{2,2}(B_1^+,N)$ satisfies
  \eqref{stationary-bi} and that $(u,Du)=(g,Dg)$ holds on $T_1$, in
  the sense of traces. Then for any ball
  $B_R^+(a)\subset B_1^+$ with radius $R\in(0,1]$ we have 
  \begin{align*}
    \sup_{B_\rho^+(y)\subset B_{R/2}^+(a)}\,&
    \rho^{4-m}\int_{B_\rho^+(y)}\big(|D^2u|^2+\rho^{-2}|Du|^2\big)\,\d x\\
    &\le
    c_1R^{4-m}\int_{B_R^+(a)}\big(|\Delta u|^2+R^{-2}|Du|^2\big)\,\d x
    +c_2R,
  \end{align*}
  with constants $c_1$, $c_2$ that depend at most on $m,N,$ and
  $\|Dg\|_{C^2}$. Moreover, we have $c_2\to0$ in the limit
  $\|Dg\|_{C^2}\to0$.  
\end{lem}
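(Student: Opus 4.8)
The plan is to derive this estimate as a more-or-less direct consequence of the boundary monotonicity formula in Theorem~\ref{thm:monotonicity}, following the well-known scheme that produces Morrey bounds from monotonicity (as in the harmonic map case of Schoen--Uhlenbeck and its biharmonic analogue in \cite{Scheven1,Altuntas}). The central quantity is the monotone-type functional $\Phi_u(a;r)$, so the first task is to control $\Phi_u$ in terms of the simpler Dirichlet-type energy $R^{4-m}\int_{B_R^+}(|\Delta u|^2+R^{-2}|Du|^2)\,\d x$ appearing on the right-hand side. The terms in $\Phi_u$ involving $\int_{S_r^+}(\partial_X|Du|^2+\cdots)$ are boundary integrals over spheres, so I would average over $r$ in a dyadic range, say $r\in(R/2,R)$, using Fubini to convert the spherical integrals into solid integrals over $B_R^+\setminus B_{R/2}^+$; the term $\partial_X|Du|^2$ contributes (after an integration by parts on the sphere, or directly) something controlled by $\int_{B_R^+}(|D^2u||Du| + |Du|^2)\,\d x$ with the right scaling weights. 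This yields, for a suitable radius $r_0\in(R/2,R)$,
\begin{equation*}
  \Phi_u(a;r_0)\le c\,R^{4-m}\int_{B_R^+(a)}\big(|\Delta u|^2 + R^{-2}|Du|^2\big)\,\d x + cR,
\end{equation*}
where the additive $cR$ absorbs the $e^{\chi r}$ factors and any lower-order terms, and vanishes as $\|Dg\|_{C^2}\to 0$ since then $\chi,K\to0$.

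Next, I would apply the monotonicity formula \eqref{bdry-monotonicity} on balls centered at points $y$ with $B_\rho^+(y)\subset B_{R/2}^+(a)$. Here one must distinguish whether $y$ lies on $T_1$ (a genuine boundary point, where Theorem~\ref{thm:monotonicity} applies directly with center $a=y$) or $y$ is an interior point (where one invokes the interior monotonicity formula of Angelsberg, or notes that $B_\rho^+(y)$ may be a full ball or a half ball with possibly smaller effective radius; in the standard way one reduces to a comparison with a concentric half-ball whose center is the nearest boundary point, at the cost of universal constants). In either case one compares $\Phi_u(y;\rho)$ with $\Phi_u(y;r)$ for $r$ comparable to $R$, and then with $\Phi_u(a;r_0)$ by a covering/triangle-inequality argument: since $B_r(y)\subset B_{cR}(a)$ for $r\sim R$, the quantity $\Phi_u(y;r)$ is bounded by a constant multiple of $R^{4-m}\int_{B_{cR}^+(a)}|\Delta u|^2\,\d x$ plus sphere terms handled exactly as above. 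The monotone inequality then gives
\begin{equation*}
  \rho^{4-m}\int_{B_\rho^+(y)}|\Delta u|^2\,\d x \lesssim \Phi_u(y;\rho) + (\text{sphere correction terms}) \lesssim \Phi_u(a;r_0) + cR,
\end{equation*}
and taking the supremum over admissible $B_\rho^+(y)$ produces the Morrey bound for $\int|\Delta u|^2$.

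Finally I would upgrade from $|\Delta u|^2$ to $|D^2u|^2$ and add the lower-order term $\rho^{-2}|Du|^2$. For the Hessian, this is a Calder\'on--Zygmund / interior-$L^2$ estimate for the Laplacian: on $B_{\rho/2}^+(y)$ one writes $u = w + h$ with $\Delta w = \Delta u$, $w\in W_0^{2,2}$, and $h$ harmonic, so $\|D^2w\|_{L^2(B_{\rho/2}^+)}\le c\|\Delta u\|_{L^2(B_\rho^+)}$ while $h$ enjoys the usual decay estimate $\rho'^{4-m}\int_{B_{\rho'}^+}|D^2h|^2 \le c(\rho'/\rho)^2 \cdot \rho^{4-m}\int_{B_\rho^+}|D^2h|^2$; combining these and iterating (absorbing a small multiple of the $|D^2u|^2$ Morrey norm back into the left side) converts the Morrey bound on $\Delta u$ into one on $D^2u$. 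The gradient term is then controlled by interpolation: $\rho^{-2}\int_{B_\rho^+}|Du|^2 \le \epsilon\int_{B_\rho^+}|D^2u|^2 + c_\epsilon \rho^{-4}\int_{B_\rho^+}|u-\bar u|^2$, and on the half-ball with boundary trace $u=g$ one uses a Poincar\'e inequality adapted to the flat boundary $T_1$ (using $(u,Du)=(g,Dg)$ there), bootstrapping against the already-established $D^2u$ Morrey bound. I expect the main obstacle to be the bookkeeping in the second step — carefully handling the two cases (boundary-centered versus interior-centered half-balls) and the sphere-term corrections uniformly, so that all constants depend only on $m$, $N$, $\|Dg\|_{C^2}$ and the additive term retains the property $c_2\to0$ as $\|Dg\|_{C^2}\to0$; the Calder\'on--Zygmund upgrade in the last step is standard but must be done with half-ball boundary estimates rather than interior ones, which requires the boundary condition on $Du$ along $T_1$.
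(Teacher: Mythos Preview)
Your overall strategy is sound, but there is a genuine gap in how you handle the correction term $K\Psi_u(a;\rho,r)$ from the monotonicity formula. You propose to absorb it into an additive ``$cR$'', but $\Psi_u$ contains the solid integral $\int_{B_r^+\setminus B_\rho^+}|D^2u|^2/|x-a|^{m-5}\,\d x$ together with sphere integrals of $|D^2u|^2$; these are precisely of the same order as the Morrey norm of $D^2u$ that you are trying to bound, not lower order. The same problem recurs in your lower bound on $\Phi_u(y;\rho)$: the sphere term $\int_{S_\rho^+}\partial_X|Du|^2$ has no sign, and after averaging in $\rho$ it again produces a contribution comparable to $\rho^{4-m}\int_{B_\rho^+}|D^2u|^2$. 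Thus your two-stage plan (first a Morrey bound on $|\Delta u|^2$, then a Calder\'on--Zygmund upgrade to $|D^2u|^2$) is circular: already the first stage requires control of $|D^2u|^2$ at all intermediate scales.

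The paper closes this circularity by a single self-improving inequality rather than by separating the two stages. One introduces the auxiliary quantity
\[
  \widetilde\Phi_u(a;\rho):=e^{\chi\rho}\rho^{4-m}\!\int_{B_\rho^+(a)}|\Delta u|^2\,\d x
  +e^{\chi\rho}\rho^{3-m}\!\int_{S_\rho^+(a)}|Du|^2\,\d\HM^{m-1},
\]
and observes that, after averaging over $\rho\in[2s,4s]$, the difference $\widetilde\Phi_u-\Phi_u$ is dominated by the non-negative radial terms on the left-hand side of~\eqref{bdry-monotonicity}. Setting $E(\sigma):=\sigma^{4-m}\int_{B_\sigma^+(a)}(|D^2u|^2+\sigma^{-2}|Du|^2)\,\d x$, one estimates both $\Psi_u$ and the residual sphere terms by $cR\sup_{\sigma\in[s,R]}E(\sigma)$ (via an integration-by-parts identity converting the weighted integral into $\int_s^r\sigma E(\sigma)\,\d\sigma$). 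On the other side, $\widetilde\Phi_u(a;\rho)$ bounds $E(s)$ from below thanks to a simple cut-off integration by parts (giving $|D^2u|^2\lesssim|\Delta u|^2+\rho^{-2}|Du|^2$ on half-balls with $(u-g,D(u-g))=0$ on $T$) and a Pohozaev-type identity for $|Du|^2$. The outcome is $\sup_{s\le R}E(s)\le cE(R)+c\widetilde K R+cR\sup_{s\le R}E(s)$, and the last term is absorbed once $R\le R_0(m,N,\|Dg\|_{C^2})$. This replaces both your step~2--3 and your Calder\'on--Zygmund iteration; the latter is heavier than necessary here, since the upgrade from $|\Delta u|^2$ to $|D^2u|^2$ is just two integrations by parts using the flat boundary condition.
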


\begin{proof}
  For the proof, we modify some ideas from \cite{Struwe} and
  \cite{Moser}.
  Throughout this proof, we write $c$ for a constant that may depend
  on $m$, $N$, and $\|Dg\|_{C^2}$. We first consider the case $a\in T_1$.
  For $0<s<\frac{R}{8}$, we consider radii
  $\rho\in[2s,4s]$ and $r\in[\frac R2,R]$ that will be chosen later. 
  Our goal is to derive an estimate for the term
  \begin{equation*}
    \widetilde\Phi_u(a;\rho)
    :=
    e^{\chi\rho}\rho^{4-m}\int_{B_\rho^+(a)}|\Delta u|^2\,\d x
    +
    e^{\chi\rho}\rho^{3-m}\int_{S_\rho^+(a)}|Du|^2\,\d \HM^{m-1},
  \end{equation*}
  where $\chi\ge0$ is the constant from
  Theorem~\ref{thm:monotonicity}, which depends only on $m,N,$ and $\|Dg\|_{C^2}$.
  For the difference of this
  term and the term $\Phi_u(a;\rho)$ defined in~\eqref{Def-Phi}, we
  compute, using again the abbreviation $X(x):=x-a$,
  \begin{align*}
    &\widetilde\Phi_u(a;\rho)-\Phi_u(a;\rho)\\
    &\qquad=
    e^{\chi\rho}\rho^{3-m}
    \int_{S_\rho^+(a)}\big(4\rho^{-2}|\partial_Xu|^2
    -\partial_X|Du|^2-3|Du|^2\big)\d\mathcal{H}^{m-1} \\
    &\qquad=
    e^{\chi\rho}\rho^{3-m}
    \int_{S_\rho^+(a)}\big(4\rho^{-2}|\partial_Xu|^2
    -2D(\partial_Xu)\cdot Du-|Du|^2\big)\d\mathcal{H}^{m-1} \\
    &\qquad\le
    e^{\chi\rho}\rho^{3-m}
    \int_{S_\rho^+(a)}\big(4\rho^{-2}|\partial_Xu|^2+|D\partial_Xu|^2\big)\d\mathcal{H}^{m-1},
  \end{align*}
  where we applied Young's inequality in the last step. 
  Taking the mean integral over $\rho\in[2s,4s]$, we deduce 
  \begin{align*}
    &\mint_{2s}^{4s}\widetilde\Phi_u(a;\rho)\d\rho\\
    &\ \le
    \mint_{2s}^{4s}\Phi_u(a;\rho)\d\rho\\
    &\quad\ +
    c\int_{B_{4s}^+(a)\setminus
      B_{2s}^+(a)}e^{\chi|x-a|}\bigg(\frac{|D\partial_Xu|^2}{|x-a|^{m-2}}+(m-2)\frac{|\partial_Xu|^2}{|x-a|^m}\bigg)\d
    x\\
    &\ \le
    \mint_{2s}^{4s}\Phi_u(a;\rho)\d\rho\\
    &\quad\ +
    c\,\mint_{2s}^{4s}\int_{B_{r}^+(a)\setminus
      B_{\rho/2}^+(a)}e^{\chi|x-a|}\bigg(\frac{|D\partial_Xu|^2}{|x-a|^{m-2}}+(m-2)\frac{|\partial_Xu|^2}{|x-a|^m}\bigg)\d
    x\d \rho,
  \end{align*}
  where we used $\frac\rho2\le 2s$ and $4s\le\frac R2\le r$ in the
  last step.
  Both terms on the right-hand side can be estimated by an application
  of the monotonicity formula~\eqref{bdry-monotonicity}. 
  This leads to the estimate
  \begin{align*}
    \mint_{2s}^{4s}\widetilde\Phi_u(a;\rho)\d\rho
    &\le
    c\,\Phi_u(a;r)\\
    &\quad+
    cK\mint_{2s}^{4s}\big(\Psi_u(a;\rho,r)+\Psi_u(a;\tfrac\rho2,r)\big)\d\rho,
  \end{align*}
  for a.e. $r\in[\frac R2,R]$, with the constant
  $K=K(m,N,\|Dg\|_{C^2})$ from Theorem~\ref{thm:monotonicity}. We
  recall that $K\to0$ as $\|Dg\|_{C^2}\to0$. 
  By definition of $\Psi_u$, the last integral can be estimated by
  \begin{align*}
    &\mint_{2s}^{4s}\big(\Psi_u(a;\rho,r)+\Psi_u(a;\tfrac\rho2,r)\big)\d\rho\\
    &\qquad\le
    2r+c\int_{B_r^+(a)\setminus
    B_{s}^+(a)}\bigg(\frac{|D^2u|^2}{|x-a|^{m-5}}+\frac{|Du|^2}{|x-a|^{m-3}}\bigg)\,\d x\\\nonumber
    &\qquad\quad+
    2\int_{S_r^+(a)}\frac{|D^2u|^2}{|x-a|^{m-6}}\,d\mathcal{H}^{m-1}.
  \end{align*}
Integrating by parts twice, we estimate the second last integral as
follows. 
\begin{align}\label{169}
&\int_{B_r^+(a)\setminus B_s^+(a)}\left(\dfrac{\vert D^2u\vert^2}{\vert
  x-a\vert^{m-5}}+\dfrac{\vert Du\vert^2}{\vert
  x-a\vert^{m-3}}\right)\d x\\\nonumber
&\quad=\int_{s}^r
\left(\sigma^{5-m}\int_{S_{\sigma}^+(a)}\vert D^2u\vert^2\d \HM^{m-1}
 +\sigma^{3-m}\int_{S_{\sigma}^+(a)}\vert Du\vert^2\d \HM^{m-1}\right)
  \d \sigma\\\nonumber
  &\quad\le
  r^{5-m}\int_{B_{r}^+(a)}\big(\vert
D^2u\vert^2+r^{-2}|Du|^2\big)\d x\\\nonumber
&\quad\quad+c\int_{s}^r\sigma^{4-m}\int_{B_{\sigma}^+(a)}\big(\vert D^2u\vert^2+\sigma^{-2}|Du|^2\big)\d x\d\sigma\\\nonumber
&\quad\leq
  cr\sup_{\sigma\in[s,r]}\E(\sigma),
\end{align}
where we introduced the notation 
\begin{align*}
  \E(\sigma)
  :=
  \sigma^{4-m}\int_{B_{\sigma}^+(a)}\big(\vert
  D^2u\vert^2+\sigma^{-2}\vert Du\vert^2\big)\d x.
\end{align*}
Combining the three preceding estimates and recalling the definition
of $\Phi_u$, we deduce
\begin{align*}
  &\mint_{2s}^{4s}\widetilde\Phi_u(a;\rho)\d\rho\\\nonumber
  &\qquad\le
  c\,\Phi_u(a;r) + cr^{6-m}\int_{S_r^+(a)}|D^2u|^2\,d\mathcal{H}^{m-1}
  +cKR+cR\sup_{\sigma\in[s,r]}\E(\sigma)\\\nonumber
  &\qquad\le
  cr^{4-m}\int_{B_r^+(a)}|\Delta u|^2\d x+cr^{5-m}\int_{S_r^+(a)}\big(|D^2u|^2+r^{-2}|Du|^2\big)\,d\mathcal{H}^{m-1}\\\nonumber
  &\quad\qquad
   +cKR+
  cR\sup_{\sigma\in[s,r]}\E(\sigma),\nonumber
\end{align*}
for a.e. $r\in[\frac R2,R]$. 
Now we choose a good radius $r\in[\frac R2,R]$ in the sense that
\begin{align*}\nonumber
  r^{5-m}\int_{S_{r}^+(a)}\big(\vert D^2u\vert^2+r^{-2}\vert
  Du\vert^2\big)\d\mathcal{H}^{m-1}
  \leq
  c\E(R).
\end{align*}
Moreover, this radius can be chosen in such a way that the preceding estimate is
valid for this choice of $r$, which implies
\begin{align}\label{estimate-of-the-mean}
  \mint_{2s}^{4s}\widetilde\Phi_u(a;\rho)\d\rho
  \le
  c\E(R) +cKR
  +
  cR\sup_{\sigma\in[s,R]}\E(\sigma).
\end{align}
Our next aim is to estimate $\widetilde\Phi_u(a;\rho)$ from below. First, we
observe that with a standard cut-off function $\eta\in
C^\infty_0(B_\rho(a))$ with $\eta\equiv1$ in $B_{\rho/2}(a)$, two
integrations by parts and Young's inequality lead to the estimate 
\begin{align*}
  &\rho^{4-m}\int_{B_{\rho/2}^+(a)}|D^2(u-g)|^2\d x
  \le
  \rho^{4-m}\int_{B_{\rho}^+(a)}\eta^2|D^2(u-g)|^2\d x\\
  &\qquad\le
  c\rho^{4-m}\int_{B_\rho^+(a)}\big(|\Delta (u-g)|^2+\rho^{-2}|D(u-g)|^2\big)\d x,
\end{align*}
which implies
\begin{align}\label{L2-estimate}
  &\rho^{4-m}\int_{B_{\rho/2}^+(a)}|D^2u|^2\d x\\\nonumber
  &\qquad\le
  c\rho^{4-m}\int_{B_\rho^+(a)}\big(|\Delta
  u|^2+\rho^{-2}|Du|^2\big)\d x
  +c\rho^2\|Dg\|_{C^1}^2,
\end{align}
where $c=c(m)$.
Two applications of Gau\ss' theorem yield
\begin{align*}
  &m\int_{B_\rho^+(a)}|Du|^2\d x\\
  &\quad=
  \int_{B_\rho^+(a)}\mathrm{div}\big((x-a)|Du|^2\big)\d x
  -
  2\int_{B_\rho^+(a)}\partial_XDu\cdot Du\,\d x\\
  &\quad=
  \rho\int_{S_\rho^+(a)}|Du|^2\d\mathcal{H}^{m-1}
  +
  2\int_{B_\rho^+(a)}\big(\partial_Xu\cdot \Delta
  u+|Du|^2\big)\d x\\
  &\quad\qquad
  -2\rho^{-1}\int_{S_\rho^+(a)}|\partial_Xu|^2\,\d\mathcal{H}^{m-1}
  +2\int_{T_\rho(a)}\partial_Xg\cdot\partial_mg\,\d x\\
  &\quad\le
  \rho\int_{S_\rho^+(a)}|Du|^2\d\mathcal{H}^{m-1}\\
  &\quad\qquad+
  \int_{B_\rho^+(a)}\big(\rho^2|\Delta u|^2+3|D u|^2\big)\d x
  +c\rho^{m}\|Dg\|_{C^0}^2. 
\end{align*}
Since $m>3$, we can re-absorb the integral of $|Du|^2$ into the left-hand
side. Multiplying the resulting estimate by $\rho^{2-m}$, we infer 
\begin{align}\label{Du-estimate}
  &\rho^{2-m}\int_{B_\rho^+(a)}|Du|^2\d x\\\nonumber
  &\quad\le
  c\rho^{3-m}\int_{S_\rho^+(a)}|Du|^2\d\mathcal{H}^{m-1}
  +
  c\rho^{4-m}\int_{B_\rho^+(a)}|\Delta u|^2\d x
  +c\rho^2\|Dg\|_{C^0}^2\\\nonumber
  &\quad\le
  c\,\widetilde\Phi_u(a;\rho) +c\rho^2\|Dg\|_{C^0}^2.
\end{align}
Combining estimates \eqref{L2-estimate} and \eqref{Du-estimate}, we
deduce
\begin{align*}
  &\rho^{4-m}\int_{B_{\rho/2}^+(a)}\big(|D^2u|^2+\rho^{-2}|Du|^2\big)\d
  x\\
  &\qquad\le
  c\rho^{4-m}\int_{B_\rho^+(a)}\big(|\Delta
  u|^2+\rho^{-2}|Du|^2\big)\d x
  +c\rho^2\|Dg\|_{C^1}^2\\
  &\qquad\le
  c\,\widetilde\Phi_u(a;\rho) +c\rho^2\|Dg\|_{C^1}^2
\end{align*}
for any $\rho\in[2s,4s]$, which implies in turn 
\begin{align*}
  E(s)
  \le
  c\,\widetilde\Phi_u(a;\rho)+cR^2\|Dg\|_{C^1}^2.
\end{align*}
We use this to estimate the left-hand side of
\eqref{estimate-of-the-mean} from below, with the result
\begin{align*}
  \E(s)&\le
  c\E(R)+c\widetilde K R
   +
  cR\sup_{\sigma\in[s,R]}\E(\sigma),\nonumber
\end{align*}
for every $s\in (0,\frac R8)$, where we abbreviated $\widetilde K:=K+\|Dg\|_{C^1}^2$. We take the supremum over
$s\in [\delta,\frac R8)$ on both sides, for some $\delta>0$, and infer 
\begin{align*}
  \sup_{s\in[\delta,R]}\E(s)
  &\le
  \sup_{s\in[\delta,R/8)}\E(s)
  +c\E(R)\\
  &\leq
  c\E(R)+c\widetilde K R
  +
  cR \sup_{s\in[\delta,R]}\E(s).
\end{align*}
Now we choose a radius $R_0=R_0(m,N,\|Dg\|_{C^2})>0$ so small that $cR_0\le
\frac12$, which allows us to re-absorb the last term into the
left-hand side, provided $R\le R_0$. Letting $\delta\downarrow0$, we
deduce 
\begin{align}\label{Morrey-boundary}
  &s^{4-m}\int_{B_s^+(a)}\big(\vert
  D^2u\vert^2+s^{-2}|Du|^2\big)\d x\\\nonumber
  &\qquad\le
  cR^{4-m}\int_{B_R^+(a)}\big(\vert
  D^2u\vert^2+R^{-2}|Du|^2\big)\d x + c \widetilde KR
\end{align}
for all $s\le R\le R_0$, in the case $a\in T_1$.
For radii $s,R$ with $s\le R_0\le R\le1$, we obtain the same estimate by
applying~\eqref{Morrey-boundary} with $R_0$ in place of $R$ and
then enlarging the domain of integration on the right-hand side. 
Finally, the estimate~\eqref{Morrey-boundary} is immediate in the case
$R_0\le s\le R$, since $R_0$ is a universal constant.
Hence, we obtain~\eqref{Morrey-boundary} for any $s\le
R\le 1$ and boundary points $a\in T_1$.

In the interior case
$B_R(a)\subset B^+$, we can argue in the same way, starting from the
interior version of the monotonicity formula from
\cite{Angelsberg}, to derive the estimate
\begin{align}\label{Morrey-interior}
  &s^{4-m}\int_{B_s(a)}\big(\vert
  D^2u\vert^2+s^{-2}|Du|^2\big)\d x\\\nonumber
  &\qquad\le
  cR^{4-m}\int_{B_R(a)}\big(\vert
  D^2u\vert^2+R^{-2}|Du|^2\big)\d x
\end{align}
for all $s\le R$. The two preceding estimates can be combined in a
standard way to obtain the result. In fact, let
$B_\rho^+(y)\subset B_{R/2}^+(a)$ be an arbitrary ball with
$\rho\le \frac R4$ and $y_m\le\frac R4$. We use the notation $(y',y_m):=y\in\R^{m-1}\times\R$
and let $R_1:=\max\{y_m,\rho\}$. Then we use first the interior
estimate \eqref{Morrey-interior} and then the boundary version
\eqref{Morrey-boundary} to deduce
\begin{align*}
  &\rho^{4-m}\int_{B_\rho^+(y)}\big(\vert
  D^2u\vert^2+\rho^{-2}|Du|^2\big)\d x\\
  &\qquad\le
  cR_1^{4-m}\int_{B_{R_1}^+(y)}\big(\vert
  D^2u\vert^2+R_1^{-2}|Du|^2\big)\d x\\
  &\qquad\le
  cR_1^{4-m}\int_{B_{2R_1}^+(y',0)}\big(\vert
  D^2u\vert^2+R_1^{-2}|Du|^2\big)\d x\\
  &\qquad\le
  cR^{4-m}\int_{B_{R/2}^+(y',0)}\big(\vert
  D^2u\vert^2+R^{-2}|Du|^2\big)\d x +c\widetilde K R\\
  &\qquad\le
  cR^{4-m}\int_{B_{R}^+(a)}\big(\vert
  D^2u\vert^2+R^{-2}|Du|^2\big)\d x+c\widetilde K R.
\end{align*}
In the remaining case $y_m>\frac R4$, the corresponding result follows from the
interior estimate \eqref{Morrey-interior}. Finally, for radii
$\rho>\frac R4$, the above estimate is trivial. We note that
$\widetilde K\to0$ in the limit $\|Dg\|_{C^2}\to0$. Hence, we have
established the assertion in any case. 
\end{proof}

For later reference, we state another consequence of the monotonicity formula.
\begin{cor}\label{cor:density}
  Assume that $u\in W^{2,2}(B_1^+,N)$ satisfies
  \eqref{stationary-bi} and $(u,Du)=(g,Dg)$ on $T_1$, in
  the sense of traces. Then the limit
  \begin{equation*}
    \lim_{\rho\downarrow0}\mint_{\rho/2}^\rho \Phi_u(a;\sigma)\d\sigma
  \end{equation*}
  exists for any $a\in T_1$.
\end{cor}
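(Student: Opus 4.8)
\textbf{Proof plan for Corollary~\ref{cor:density}.}
The plan is to exhibit the quantity $\rho\mapsto\mint_{\rho/2}^\rho\Phi_u(a;\sigma)\,\d\sigma$ as (almost) monotone in $\rho$, so that the limit as $\rho\downarrow0$ exists by monotone convergence. The monotonicity formula \eqref{bdry-monotonicity} already states that, for a.e.\ $0<\rho<r<R$,
\begin{equation*}
  \Phi_u(a;\rho)\le\Phi_u(a;r)+K\Psi_u(a;\rho,r),
\end{equation*}
and the correction term $\Psi_u(a;\rho,r)$ is an absolutely continuous function of $r$ (plus a term linear in $r$) whose density is integrable near the origin, since $m\ge5$ guarantees that $|x-a|^{5-m}|D^2u|^2$, $|x-a|^{3-m}|Du|^2\in L^1_{\mathrm{loc}}$ by the Morrey estimate of Lemma~\ref{lemma:morrey}. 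Concretely, I would fix $r\in(0,R_0]$ and set
\begin{equation*}
  F(\rho):=\mint_{\rho/2}^\rho\Phi_u(a;\sigma)\,\d\sigma + K\,G(\rho),
  \qquad
  G(\rho):=\rho+\int_{B_r^+(a)\setminus B_{\rho/2}^+(a)}\Big(\tfrac{|D^2u|^2}{|x-a|^{m-5}}+\tfrac{|Du|^2}{|x-a|^{m-3}}\Big)\d x,
\end{equation*}
plus the boundary-sphere term of $\Psi$ evaluated with the good-radius trick; the point is that $G$ is nondecreasing in $\rho$ and bounded (by the Morrey estimate), while averaging \eqref{bdry-monotonicity} over the shell $\sigma\in[\rho/2,\rho]$ and over a band of radii $s\in[\rho,2\rho]$ to absorb the boundary-sphere term of $\Psi$ (exactly as in the ``good radius'' step of the proof of Lemma~\ref{lemma:morrey}) shows that $\mint_{\rho/2}^\rho\Phi_u(a;\sigma)\,\d\sigma$ differs from a nondecreasing function by the bounded increasing correction $K\,G$. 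Hence $F$ is, up to a controlled nondecreasing perturbation, monotone, and both $F$ and $G$ are bounded below (the lower bound on $\mint_{\rho/2}^\rho\Phi_u$ follows from the lower bounds \eqref{L2-estimate}, \eqref{Du-estimate} on the negative sphere contribution, giving $\Phi_u(a;\sigma)\ge -c\sigma^2\|Dg\|_{C^1}^2\ge -c$). A bounded function that is monotone up to a bounded monotone perturbation has a limit as $\rho\downarrow0$, and since $K\,G(\rho)\to K\,G(0^+)$ exists, the limit of $\mint_{\rho/2}^\rho\Phi_u(a;\sigma)\,\d\sigma$ exists as well.

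In more detail, the cleanest route avoids introducing averages twice: since $\Phi_u(a;r)$ itself is only defined for a.e.\ $r$ (it involves traces of $Du$ and $D^2u$ on spheres), I would work directly with $r\mapsto\mint_{r/2}^r\Phi_u(a;\sigma)\d\sigma$, which is a genuine (locally absolutely continuous) function of $r$. Integrating \eqref{bdry-monotonicity} in $\rho$ over $[\rho_0/2,\rho_0]$ and in $r$ over $[r_0,2r_0]$ for $\rho_0<r_0$, and using that the error term $\Psi$, after this double averaging, is dominated by $K(r_0+\sup_{\sigma\le 2r_0}\E(\sigma))$ times a nondecreasing bounded factor (the radial integral of $|D^2u|^2|x-a|^{5-m}+|Du|^2|x-a|^{3-m}$), yields
\begin{equation*}
  \mint_{\rho_0/2}^{\rho_0}\Phi_u(a;\sigma)\d\sigma
  \le
  \mint_{r_0}^{2r_0}\Big(\mint_{r/2}^r\Phi_u(a;\sigma)\d\sigma\Big)\d r
  + K\,\omega(r_0),
\end{equation*}
where $\omega(r_0)\to0$ as $r_0\downarrow0$ by absolute continuity of the integral and the Morrey bound. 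Writing $h(r):=\mint_{r/2}^r\Phi_u(a;\sigma)\d\sigma+K\,\omega(r)$ with $\omega$ chosen nondecreasing, this gives $\liminf_{\rho\downarrow0}h(\rho)\le h(r_0)$ for every small $r_0$, hence $\limsup_{\rho\downarrow0}h(\rho)\le\liminf_{\rho\downarrow0}h(\rho)$, i.e.\ $h$, and therefore $\mint_{r/2}^r\Phi_u(a;\sigma)\d\sigma$, has a limit as $r\downarrow0$. Boundedness below (needed so the $\liminf$ is finite) comes from \eqref{Du-estimate} and the trivial bound $\int_{S_\sigma^+}|Du|^2\ge0$, which give $\widetilde\Phi_u(a;\sigma)\ge0$ and $\Phi_u(a;\sigma)\ge\widetilde\Phi_u(a;\sigma)-c\sigma^{3-m}\int_{S_\sigma^+}|D\partial_Xu|^2\,d\HM^{m-1}$, whose negative part is again integrable in $\sigma$ by the Morrey estimate, so the averages $\mint_{r/2}^r\Phi_u$ stay bounded below.

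The main obstacle I anticipate is handling the lower-order and boundary-sphere terms in $\Phi_u$ and $\Psi_u$ carefully: $\Phi_u$ is not itself monotone, only monotone modulo the $K\Psi_u$ error, and $\Psi_u$ contains a sphere integral $\int_{S_\rho^+\cup S_r^+}|D^2u|^2|x-a|^{6-m}$ that is \emph{not} automatically small as $\rho\downarrow0$ for a fixed $\rho$ — it is only small on a good sequence of radii. This is precisely why I pass to the average $\mint_{\rho/2}^\rho$ rather than working with $\Phi_u(a;\rho)$ directly: averaging converts the sphere integral into a solid-integral tail $\int_{B_\rho^+\setminus B_{\rho/4}^+}|D^2u|^2|x-a|^{5-m}$, which \emph{does} tend to zero as $\rho\downarrow0$ by the Morrey estimate of Lemma~\ref{lemma:morrey} and dominated convergence. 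Once this averaging is in place, the argument is the standard ``bounded $+$ almost monotone $\Rightarrow$ convergent'' reasoning, and the constants $\chi,K$ play no essential role beyond being finite.
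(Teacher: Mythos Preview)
Your approach is essentially the paper's, and the core idea---average the monotonicity inequality \eqref{bdry-monotonicity} over inner and outer radii, control the averaged $\Psi_u$-error by $c(u)R$ via Lemma~\ref{lemma:morrey}, then take $\limsup_{\rho\downarrow0}$ followed by $\liminf_{R\downarrow0}$---is exactly right. The paper's execution is much shorter: it simply writes
\[
  \mint_{\rho/2}^\rho \Phi_u(a;\sigma)\,\d\sigma
  \le
  \mint_{R/2}^{R} \Phi_u(a;s)\,\d s
  + K\mint_{\rho/2}^\rho\mint_{R/2}^{R} \Psi_u(a;\sigma,s)\,\d s\,\d\sigma,
\]
bounds the last term by $c(u)R$, and concludes in two lines.

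Two small slips in your write-up: in your displayed inequality the right-hand side should be the \emph{single} average $\mint_{r_0}^{2r_0}\Phi_u(a;s)\,\d s$, not the nested average $\mint_{r_0}^{2r_0}\big(\mint_{r/2}^r\Phi_u\big)\d r$ (which is what double-integrating \eqref{bdry-monotonicity} actually produces, and which is needed for the final step to go through); and in the concluding line you want $\limsup_{\rho\downarrow0}$ on the left, not $\liminf$, before taking $\liminf_{r_0\downarrow0}$ on the right. With those corrected your argument is the paper's. The discussion of boundedness below and the first ``$F,G$'' paragraph are unnecessary: once $\limsup\le\liminf$ the limit exists, full stop.
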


\begin{proof}
  For two radii $0<\rho<\frac R2<\frac14(1-|a|)$, the monotonicity formula from
  Theorem~\ref{thm:monotonicity} implies
  \begin{equation}\label{averaged-monotonicity}
    \mint_{\rho/2}^\rho \Phi_u(a;\sigma)\d\sigma
    \le
    \mint_{R/2}^{R} \Phi_u(a;s)\d s
    +
    K\mint_{\rho/2}^\rho\mint_{R/2}^{R} \Psi_u(a;\sigma,s)\d s\d\sigma.
  \end{equation}
  Using an integration by parts argument similarly to \eqref{169} and
  then applying Lemma~\ref{lemma:morrey}, we can estimate
  \begin{align*}
    &\mint_{\rho/2}^\rho\mint_{R/2}^R \Psi_u(a;\sigma,s)\d s\d\sigma\\
    &\qquad\le
    cR\sup_{\sigma\in[\rho/2,R]}\sigma^{4-m}\int_{B_\sigma^+(a)}\big(|D
    ^2u|^2+\sigma^{-2}|Du|^2\big)\d x
    \le
    c(u) R.
  \end{align*}
  Here, $c(u)$ denotes a constant that
  depends on $m,N,\|Dg\|_{C^2}$, and $\|u\|_{W^{2,2}}$. We use this to
  estimate the right-hand side of~\eqref{averaged-monotonicity}. Then,
  we first let $\rho\downarrow0$ and then $R\downarrow0$ in the
  resulting estimate, which implies
  \begin{equation*}
    \limsup_{\rho\downarrow0}\mint_{\rho/2}^\rho \Phi_u(a;\sigma)\,\d\sigma
    \le
    \liminf_{R\downarrow0}\mint_{R/2}^R \Phi_u(a;s)\,\d s.
  \end{equation*}
  This concludes the proof of the corollary.
\end{proof}

\subsection{Partial regularity for variationally biharmonic maps}

The following $\eps$-regularity result was first established in
\cite[Thm. 1.1]{GongLammWang} under the additional assumption that a
boundary monotonicity inequality of the
type~\eqref{bdry-monotonicity} is satisfied. The proof was later
completed in \cite{Altuntas}, where the boundary monotonicity formula
was proved for arbitrary variationally biharmonic maps. The Morrey
space estimate that results from the monotonicity formula is stated in
Lemma~\ref{lemma:morrey}. Combining
\cite[Lemma 3.1]{GongLammWang} with Lemma~\ref{lemma:morrey} leads to
the following regularity result.
\begin{thm}\label{epsreg}
  Let $m\ge 5$, $a\in\partial\R^m_+$, $\rho>0$, and $g\in
  C^\infty(B_\rho^+(a),N)$ be given.
  There exists a constant $\eps_1>0$, depending only on $m,N$, and
  $\|Dg\|_{C^2}>0$ so that for every weakly 
  biharmonic map $u\in W^{2,2}(B_\rho^+(a),N)$ with
  \eqref{stationary-bi} that attains the
  Dirichlet datum $g$ on $T_\rho(a)$ and fulfills the estimate 
  \begin{equation*}
    \rho^{4-m}\int_{B_\rho^+(a)}\big(|\Delta u|^2+\rho^{-2}|Du|^2\big)\,\d x+\rho<\eps_1
  \end{equation*}
  we have $u\in C^\infty(B_{\rho/2}^+(a),N)$.
\end{thm}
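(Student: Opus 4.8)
The plan is to establish Theorem~\ref{epsreg} by combining the
interior $\eps$-regularity theory for biharmonic maps with the
boundary Morrey estimate from Lemma~\ref{lemma:morrey}, following the
scheme of \cite{GongLammWang}. The key observation is that the smallness
hypothesis on the scale-invariant quantity
$\rho^{4-m}\int_{B_\rho^+(a)}(|\Delta u|^2+\rho^{-2}|Du|^2)\,\d x+\rho$
propagates to every smaller half-ball: applying
Lemma~\ref{lemma:morrey} on $B_\rho^+(a)$ (with $R=\rho$) shows that
for every $B_\sigma^+(y)\subset B_{\rho/2}^+(a)$ one has
\begin{equation*}
  \sigma^{4-m}\int_{B_\sigma^+(y)}\big(|D^2u|^2+\sigma^{-2}|Du|^2\big)\,\d x
  \le c_1\,\eps_1+c_2\rho\le c\,\eps_1,
\end{equation*}
so the full Morrey-type norm
$\|D^2u\|_{L^{2,m-4}(B_{\rho/2}^+(a))}^2+\|Du\|_{L^{2,m-2}(B_{\rho/2}^+(a))}^2$
is controlled by a fixed multiple of $\eps_1$. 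This is precisely the
quantity that \cite[Lemma 3.1]{GongLammWang} (the technical decay/Morrey
estimate underlying their boundary $\eps$-regularity) requires as input.

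From here I would invoke \cite[Lemma 3.1]{GongLammWang}, which upgrades
this Morrey bound into a higher-integrability or Morrey-decay gain for
$\Delta u$ and the lower-order terms, and then feed this into the
boundary version of the $\eps$-regularity iteration: once the relevant
Morrey norm is below the threshold, a Riesz-potential/Adams-type
estimate applied to the representation~\eqref{weakly-bi} yields that
$Du$ lies in a better Morrey space, hence (by the Adams embedding)
$u$ is Hölder continuous near $T_\rho(a)$. Continuity of $u$ combined
with the smallness of the scaled energy places $u$ into the regime of
the classical linear boundary elliptic theory for the fourth-order
system, and a standard bootstrap — differentiating the equation,
using Schauder estimates up to the flat boundary where the Dirichlet
data $(u,Du)=(g,Dg)$ with $g\in C^\infty$ live — gives
$u\in C^\infty(B_{\rho/2}^+(a),N)$.

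The main obstacle, and the only genuinely non-routine point, is the
first step: verifying that the hypotheses of \cite[Lemma
3.1]{GongLammWang} are met. That lemma was originally formulated under
the \emph{a priori} assumption of a boundary monotonicity inequality of
the form~\eqref{bdry-monotonicity}; the content of \cite{Altuntas}
(recorded here as Theorem~\ref{thm:monotonicity} and its consequence
Lemma~\ref{lemma:morrey}) is exactly what removes that assumption. So
the argument is essentially a matter of bookkeeping: one must check that
the Morrey bound produced by Lemma~\ref{lemma:morrey} has the right
scaling and the right constants (in particular that the additive term
$c_2\rho$ is harmless, which it is because $\rho<\eps_1$ is part of the
smallness hypothesis and $c_2\to0$ as $\|Dg\|_{C^2}\to0$), and that the
boundary trace condition on $(u,Du)$ is compatible with the Schauder
bootstrap near $T_\rho(a)$. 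Everything downstream of the Morrey estimate
is the by-now standard fourth-order $\eps$-regularity machinery and the
bootstrap, so I would only sketch those and refer to
\cite{GongLammWang,Altuntas,Wang} for the details.
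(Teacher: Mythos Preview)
Your proposal is correct and matches the paper's approach exactly: the paper does not give a separate proof but simply states that the result follows by ``combining \cite[Lemma 3.1]{GongLammWang} with Lemma~\ref{lemma:morrey},'' which is precisely the reduction you describe. Your additional remarks on the bootstrap and on why the additive term $c_2\rho$ is harmless are accurate elaborations of details the paper leaves implicit.
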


In a standard way, the preceding theorem and its interior counterpart
from \cite{Wang} imply the following partial
regularity result.

\begin{cor}\label{cor:partial}
  Let $u\in W^{2,2}(B_4^+,N)$ be variationally biharmonic with respect
  to the Dirichlet datum $g\in C^\infty(B_4^+,N)$ on $T_4$. Then there is a
  subset $\Sigma\subset\Omega$ with $\HM^{m-4}(\Sigma)=0$ so that
  $u\in C^\infty(\Omega\setminus\Sigma,N)$.
\end{cor}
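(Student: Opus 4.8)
The plan is to combine the boundary $\eps$-regularity Theorem~\ref{epsreg} with its interior counterpart via a standard covering and Vitali-type argument on the size of the singular set. First I would fix $\eps_1>0$ as in Theorem~\ref{epsreg} (and the analogous interior constant from \cite{Wang}), shrinking it if necessary so that both results apply with the same threshold; note that $\|Dg\|_{C^2}$ is finite on, say, $B_3^+$, so the constant is uniform on that region. I would then define $\Sigma$ to be the set of points $x\in\overline{B_3^+}$ at which the scaled energy does not become small, i.e.
\begin{equation*}
  \Sigma:=\Big\{x\in\overline{B_3^+}\,:\,\liminf_{\rho\downarrow0}\,\rho^{4-m}\!\int_{B_\rho(x)\cap B_4^+}\!\big(|\Delta u|^2+\rho^{-2}|Du|^2\big)\,\d y+\rho\ \ge\ \eps_1\Big\}.
\end{equation*}
By Theorem~\ref{epsreg} at boundary points $x\in T_3$ and by the interior $\eps$-regularity theorem at interior points, every point of $\overline{B_3^+}\setminus\Sigma$ has a neighbourhood in which $u$ is smooth; hence $u\in C^\infty(\Omega\setminus\Sigma,N)$, where $\Omega=B_3^+$ (any fixed relatively compact subdomain works, and one exhausts $B_4^+$).

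It remains to show $\HM^{m-4}(\Sigma)=0$. This is the routine but central computation. Here I would use the Morrey space estimate of Lemma~\ref{lemma:morrey}, which gives, for every ball $B_R^+(a)\subset B_4^+$,
\begin{equation*}
  \sup_{B_\rho^+(y)\subset B_{R/2}^+(a)}\rho^{4-m}\!\int_{B_\rho^+(y)}\!\big(|D^2u|^2+\rho^{-2}|Du|^2\big)\,\d x\ \le\ c_1R^{4-m}\!\int_{B_R^+(a)}\!\big(|\Delta u|^2+R^{-2}|Du|^2\big)\,\d x+c_2R,
\end{equation*}
together with absolute continuity of the integral. Fix $\delta>0$; since $|D^2u|^2+|Du|^2\in L^1(B_4^+)$, one can choose $R=R(\delta)$ small enough that $R^{4-m}\int_{B_R^+(a)}(\cdots)\le$ (small) for all relevant $a$ with controlled overlap, and also $c_2R<\eps_1/2$. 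For $x\in\Sigma$ one then has a definite lower bound on the energy of $u$ in $B_\rho(x)$ for a sequence $\rho\downarrow0$; a Vitali covering argument applied to such balls, combined with the upper energy bound, forces $\sum\rho_j^{m-4}$ to be controlled by $\int_{\{\text{small nbhd of }\Sigma\}}(|D^2u|^2+|Du|^2)\,\d x/\eps_1$, which tends to $0$ as $\delta\downarrow0$ by absolute continuity. Letting $\delta\downarrow0$ yields $\HM^{m-4}(\Sigma)=0$.

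The main obstacle is purely bookkeeping rather than conceptual: one must handle boundary balls and interior balls on the same footing, so that the covering argument does not break down near $T_4$. This is exactly what the boundary Morrey estimate Lemma~\ref{lemma:morrey} is designed to provide — it controls the scaled energy on half-balls centered at or near the boundary in terms of the energy on a larger half-ball — so the standard interior argument (as in \cite{Wang}, \cite{Scheven1}) carries over verbatim once one replaces balls by half-balls and invokes Lemma~\ref{lemma:morrey} in place of the interior monotonicity. The only point requiring a little care is the distinction, when covering a point $x$ near $T_4$, between centering the covering half-ball at $x$ itself or at its projection $(x',0)$ onto $T_4$; this is handled exactly as in the final paragraph of the proof of Lemma~\ref{lemma:morrey}.
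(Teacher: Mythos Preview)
Your approach is correct and is exactly the ``standard'' argument the paper alludes to (the paper gives no detailed proof beyond pointing to Theorem~\ref{epsreg} and its interior analogue in \cite{Wang}). One small point to tighten: the absolute-continuity step as you wrote it does not quite handle the term $\rho^{2-m}\int_{B_\rho}|Du|^2$, since the prefactor $\rho^{2-m}$ blows up faster than $\rho^{4-m}$ and $|Du|^2\in L^1$ alone only yields $\HM^{m-2}$-smallness of the corresponding bad set, not $\HM^{m-4}$-smallness. The clean fix, used elsewhere in the paper (cf.\ the proof of Lemma~\ref{defect}), is Gagliardo--Nirenberg: since $u\in L^\infty\cap W^{2,2}$ one has $Du\in L^4$, and H\"older gives $\rho^{2-m}\int_{B_\rho}|Du|^2\le c\big(\rho^{4-m}\int_{B_\rho}|Du|^4\big)^{1/2}$. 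Hence $\Sigma$ is contained in the set where $\rho^{4-m}\int_{B_\rho}(|D^2u|^2+|Du|^4)$ fails to tend to zero, and the Vitali/absolute-continuity argument (or directly \cite[Lemma~3.2.2]{Ziemer}) applied to the $L^1$ function $|D^2u|^2+|Du|^4$ gives $\HM^{m-4}(\Sigma)=0$.
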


Finally, we have the following quantitative estimate. In the case of
harmonic maps, the corresponding result is due to Schoen
\cite{Schoen}. For the higher order case considered here, we employ a
technique that goes back to Moser \cite[Lemma 5.3]{Moser1}, see also
\cite[Lemma 5.3]{Scheven2}. 

\begin{lem}\label{lem:uniform}
  For every $\delta>0$, $\Lambda>0$, and $\alpha\in(0,1)$,
  there is a constant $\eps=\eps(\delta,\Lambda,\alpha,m,N)>0$ so that the
  following holds. Assume that $u\in C^4(\overline B_1^+,N)$ is a
  biharmonic map with Dirichlet values $g\in C^{4,\alpha}(B_1^+,N)$ on
  $T_1$ for which 
  \begin{equation}\label{smalldelta}
    \left\{\begin{array}{l}
    \displaystyle\sup_{B_\rho^+(a)\subset B_1^+}\,
    \rho^{2-m}\int_{B_\rho^+(a)}|Du|^2\,\d x<\eps\\[3.5ex]
    \quad\|g\|_{C^{4,\alpha}(B_1^+)}\le \Lambda
    \end{array}\right.
  \end{equation}
  holds true, then we have the estimate 
  \begin{equation*}
    \sum_{k=1}^4|D^ku(x)|^{1/k}\le \frac\delta{1-|x|}
   \mbox{\qquad for all $x\in B_1^+$.}
 \end{equation*}
\end{lem}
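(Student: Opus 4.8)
The plan is to argue by contradiction via a point-picking (blow-up) argument in the spirit of Moser, using the $\eps$-regularity of Theorem~\ref{epsreg} together with interior elliptic estimates and the Morrey decay from Lemma~\ref{lemma:morrey}. Suppose the conclusion fails. Then there are $\delta>0$, $\Lambda>0$, $\alpha\in(0,1)$, a sequence $\eps_j\downarrow0$, and biharmonic maps $u_j\in C^4(\overline B_1^+,N)$ with $\|g_j\|_{C^{4,\alpha}(B_1^+)}\le\Lambda$ satisfying the smallness condition~\eqref{smalldelta} with $\eps_j$ in place of $\eps$, but for which there exists $x_j\in B_1^+$ with $\sum_{k=1}^4|D^ku_j(x_j)|^{1/k}>\delta/(1-|x_j|)$. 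Consider the scaling-invariant quantity $\Theta_j(x):=(1-|x|)\sum_{k=1}^4|D^ku_j(x)|^{1/k}$ on $\overline{B_1^+}$. It is continuous, vanishes on $S_1^+$, and by assumption $\sup\Theta_j>\delta$. A standard point-selection lemma (cf. Moser, \cite[Lemma 5.3]{Moser1}, or \cite[Lemma 5.3]{Scheven2}) produces points $y_j\in B_1^+$ and radii $r_j>0$ with $r_j\to 0$, $r_j\le\tfrac12(1-|y_j|)$, such that, writing $m_j:=\sum_{k=1}^4|D^ku_j(y_j)|^{1/k}$, we have $m_j r_j\to\infty$ while $\sum_{k=1}^4|D^ku_j(z)|^{1/k}\le 2 m_j$ for all $z\in B_{r_j}(y_j)\cap\overline{B_1^+}$.

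Next I would rescale. Set $v_j(z):=u_j(y_j+m_j^{-1}z)$ and $h_j(z):=g_j(y_j+m_j^{-1}z)$, defined on $\Omega_j:=m_j(B_1^+-y_j)$, which is a ball (or half-ball) of radius $m_j r_j\to\infty$. By construction $\sum_{k=1}^4|D^k v_j|^{1/k}\le 2$ on $B_{m_jr_j}\cap\Omega_j$, with equality of the sum at $0$ equal to $1$; moreover $\|h_j\|_{C^{4,\alpha}}$ scales favorably (the $C^1$ norm of $h_j$ is $O(m_j^{-1})\to0$, and likewise all higher norms tend to zero). One now distinguishes two cases according to whether $\operatorname{dist}(y_j,\partial\R^m_+)\cdot m_j$ stays bounded or tends to infinity. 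In the interior case, the $v_j$ converge in $C^4_{\mathrm{loc}}(\R^m)$ (after passing to a subsequence) to a map $v\in C^4(\R^m,N)$ which is biharmonic (the target equation~\eqref{weakly-bi} passes to the limit since all curvature terms are controlled and $h_j\to\mathrm{const}$), with $\sum_k|D^kv|^{1/k}\le2$ everywhere and $\sum_k|D^kv(0)|^{1/k}=1$; in the boundary case the limit lives on a half-space $\R^m_+$ (or a translate) and is biharmonic there with $(v,Dv)=(c,0)$ on the boundary plane, because $(h_j,Dh_j)$ converges to $(c,0)$. The key point is that $v$ is nonconstant, since $\sum_k|D^kv(0)|^{1/k}=1>0$.

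To reach a contradiction I would show the limit $v$ must actually be constant. Here the Morrey estimate of Lemma~\ref{lemma:morrey} (and its interior counterpart~\eqref{Morrey-interior}) is decisive: the smallness hypothesis~\eqref{smalldelta} gives $\rho^{2-m}\int_{B_\rho^+}|Du_j|^2<\eps_j$ uniformly, hence after rescaling $\rho^{2-m}\int_{B_\rho(z_0)}|Dv_j|^2\to0$ for every fixed ball, so the limit $v$ satisfies $\int_{B_\rho(z_0)}|Dv|^2=0$ for all balls, forcing $Dv\equiv0$. (Equivalently, the scaled Morrey norm $\rho^{4-m}\int_{B_\rho}(|D^2v_j|^2+\rho^{-2}|Dv_j|^2)$ is uniformly small by Lemma~\ref{lemma:morrey}, so one may even invoke $\eps$-regularity directly to get $v$ constant, or at least smooth with vanishing gradient.) This contradicts $\sum_k|D^kv(0)|^{1/k}=1$, and the proof is complete. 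The main obstacle is the bookkeeping in the point-picking step, namely verifying that the selected radii $r_j$ satisfy $m_jr_j\to\infty$ while keeping the gradient bound $\sum_k|D^ku_j|^{1/k}\le 2m_j$ on $B_{r_j}(y_j)$; this is the standard Moser-type trick but requires care because four derivatives of different scaling weights appear simultaneously in $\Theta_j$, so one must choose the comparison function and the iteration of scales so that \emph{all} four terms remain controlled. Once the correct scaling-invariant quantity $\Theta_j$ and the selection are set up, the convergence and the use of the Morrey estimate are routine.
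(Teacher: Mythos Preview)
Your approach is essentially the paper's: contradiction via Moser-type point-picking on $(1-|x|)\sum_{k=1}^4|D^ku|^{1/k}$, rescaling, and using the Morrey smallness $\rho^{2-m}\int|Du_j|^2\to0$ to force the $C^4$-limit to be constant, contradicting $[v]_{C^4}(0)>0$. One minor correction: the standard point-picking (as in \cite{Moser1,Scheven2} and the paper) yields only $m_jr_j\ge\delta/2$, not $m_jr_j\to\infty$, so the rescaled domain merely contains a \emph{fixed} half-ball rather than exhausting $\R^m_+$; this is enough for the contradiction, and the paper exploits it by working directly on $\Omega_i\cap B_{1/2}$ with half-space Schauder estimates, avoiding your interior/boundary case split.
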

\begin{proof}
  We use the abbreviation $[u]_{C^4}(x):=\sum_{k=1}^4|D^ku(x)|^{1/k}$ for 
  $x\in B_1^+$. If the assertion of the lemma was not true, we could
  find sequences of biharmonic maps $u_i\in C^4(\overline B_1^+,N)$ and
  boundary values $g_i\in C^{4,\alpha}(B_1^+,N)$ with
  $(u_i,Du_i)=(g_i,Dg_i)$ on $T_1$ for all $i\in\N$ so that 
  \begin{align}\label{smalldelta-sequence}
    \left\{
    \begin{array}{l}  
     \displaystyle\sup_{B_\rho^+(a)\subset
      B_1^+}\,\rho^{2-m}\int_{B_\rho^+(a)}|Du_i|^2\,\d x\to0
      \qquad\mbox{as  }i\to\infty,\\[3.5ex]
      \quad\;\displaystyle\sup_{i\in\N}\|g_i\|_{C^{4,\alpha}(B_1^+)}\le \Lambda,
    \end{array}
    \right.
  \end{align}
  but for all $i\in\N$, we have
  \begin{align}\label{notsmall}
    \max_{0\le r\le 1}\,(1-r)\max_{\overline B_r^+}\,[u_i]_{C^4}>\delta.
  \end{align}
  
  For every $i\in\N$, we choose a radius $r_i\in[0,1)$ with
  \begin{equation*}
    (1-r_i)\max_{\overline B_{r_i}^+}\,[u_i]_{C^4}
    =
    \max_{0\le r\le 1}(1-r)\max_{\overline B_r^+}\,[u_i]_{C^4}
  \end{equation*}
  and then a point $x_i\in\overline B_{r_i}^+$ with
  \begin{equation*}
    [u_i]_{C^4}(x_i)=\max_{\overline B_{r_i}^+}\,[u_i]_{C^4}.
  \end{equation*}
  With these choices, we define scaling factors by 
  \begin{equation*}
    \lambda_i:=\frac\delta{2\,[u_i]_{C^4}(x_i)}
    \qquad\mbox{for }i\in\N.
  \end{equation*}
  We observe that \eqref{notsmall} implies $\lambda_i<\frac{1-r_i}2<1$.
  With these factors we define rescaled maps
  \begin{equation*}
    v_i(x):=u_i(x_i+\lambda_ix)
    \qquad\mbox{and}\qquad
    h_i(x):=g_i(x_i+\lambda_ix)
  \end{equation*}
  for $x\in\Omega_i$, where
  \begin{equation*}
    \Omega_i:=\left\{(x^{(1)},\ldots,x^{(m)})\in
    B_1\,:\,x^{(m)}\ge-\lambda_i^{-1}x_i^{(m)}\right\}\supset B_1^+. 
  \end{equation*}
  The rescaled maps satisfy 
  \begin{equation}\label{delta2}
   [v_i]_{C^4}(0)=\lambda_i[u_i]_{C^4}(x_i)= \frac\delta2
  \end{equation}
  by definition of $\lambda_i$. Moreover, because of
  $B_{\lambda_i}(x_i)\subset B_{(1+r_i)/2}$ and the choice of $r_i$,
  $x_i$, and $\lambda_i$, we infer 
  \begin{equation}\label{delta3}
    \max_{\overline \Omega_i}\,[v_i]_{C^4}
    \le \lambda_i\max_{\overline B_{(1+r_i)/2}^+}\,[u_i]_{C^4}
    \le
    \lambda_i\,\frac{1-r_i}{1-\frac{1+r_i}2}\,[u_i]_{C^4}(x_i)
    =
    2\lambda_i[u_i]_{C^4}(x_i)
     =\delta.
  \end{equation}
  From $\lambda_i<1$ and \eqref{smalldelta-sequence}$_2$, we obtain
  \begin{equation}
    \label{G1}
    \sup_{i\in\N}\|h_i\|_{C^{4,\alpha}(\Omega_i)}
    <
    \sup_{i\in\N}\|g_i\|_{C^{4,\alpha}(B_1^+)}
    \le \Lambda.
  \end{equation}
  By the scaling invariance of the biharmonic map equation, the maps
  $v_i$ are again biharmonic, which means
  by~\eqref{weakly-bi} that they satisfy a boundary value problem of
  the form  
  \begin{equation}\label{boundary-problem}
    \left\{
    \begin{array}{ll}
      \Delta^2v_i=\tilde f(v_i,Dv_i,D^2v_i,D^3v_i)=:f_i
      &\mbox{in }\Omega_i,\\[1.5ex]
      v_i=h_i,\ Dv_i=Dh_i&
      \mbox{on $B_1\cap\big\{x^{(m)}=-\lambda_i^{-1}x_i^{(m)}\big\}$}.
    \end{array}
    \right.
  \end{equation}
  Clearly, in the case that the last set is empty, the map $v_i$ satisfies the
  differential equation on the full ball $B_1$ and there is no
  boundary condition.
  From the form of the biharmonic map equation and \eqref{delta3}, we
  infer
  \begin{equation}
    \label{right-hand-side-bounded}
    \sup_{i\in\N}\|f_i\|_{C^1(\Omega_i)}<\infty.
  \end{equation}
  For a standard cut-off function $\zeta\in C^\infty_0(B_1,[0,1])$
  with $\zeta\equiv 1$ in $B_{1/2}$, we use classical Schauder
  estimates for the maps $\zeta v_i$ on the halfspaces
  $\R^m\cap\{x^{(m)}=-\lambda_i^{-1} x_i^{(m)}\}$. In this way, we deduce
  that~\eqref{boundary-problem} implies
  \begin{equation*}
    \|v_i\|_{C^{4,\alpha}(\Omega_i\cap B_{1/2})}
    \le
    c\big(\|h_i\|_{C^{4,\alpha}(\Omega_i)}+\|f_i\|_{C^{0,\alpha}(\Omega_i)}+\|v_i\|_{C^{3,\alpha}(\Omega_i)}\big).
  \end{equation*}
  In view of \eqref{delta3}, \eqref{G1}, and \eqref{right-hand-side-bounded}, we infer
  that the restrictions $v_i|_{B_{1/2}^+}$ are bounded in
  $C^{4,\alpha}(B_{1/2}^+,N)$, independently of $i\in\N$.
  Therefore, after passing to a
  subsequence, the Arzel\`a-Ascoli theorem
  yields the convergence
  \begin{equation*}
    v_i\to v\qquad\mbox{in $C^4(B_{1/2}^+,N)$, as $i\to\infty$},
  \end{equation*}
  for some limit map $v\in C^4(B_{1/2}^+,N)$. Now on the one hand, the
  identity~\eqref{delta2} implies
  \begin{equation}\label{notconstant}
    [v]_{C^4}(0)=\lim_{i\to\infty}[v_i]_{C^4}(0)=\frac\delta2,
  \end{equation}
  but on the other hand, the choice of $u_i$ according
  to~\eqref{smalldelta-sequence}$_1$ leads to the estimate
  \begin{align*}
    \int_{B_{1/2}^+}|Dv|^2\,\d x
    &=
    \lim_{i\to\infty}\int_{B_{1/2}^+}|Dv_i|^2\,\d x
    \le
    \lim_{i\to\infty}\int_{\Omega_i\cap B_{1/2}}|Dv_i|^2\,\d x\\
    &=
    \lim_{i\to\infty}\lambda_i^{2-m}\int_{B_{\lambda_i/2}^+(x_i)}|Du_i|^2\,\d
    x=0,
  \end{align*}
  which means that $v$ is constant on $B_{1/2}^+$. In view of
  \eqref{notconstant}, this yields the desired contradiction and
  completes the proof of the lemma. 
\end{proof}

By combining the preceding regularity results, we arrive at the following conclusion.

\begin{cor}
  \label{cor:uniform}
  We consider a ball $B_r^+(x_0)=B_r(x_0)\cap\R^m_+$ for some
  $x_0\in\R^m_+$ and $r\in(0,1]$. 
  There is a constant $\eps_0=\eps_0(\Lambda,\alpha,m,N)>0$, so that for every
  weakly biharmonic map $u\in W^{2,2}(B_r^+(x_0),N)$ that satisfies
  \eqref{stationary-bi} and attains the 
  Dirichlet datum $g\in C^\infty(B_r^+(x_0),N)$ on $T_r(x_0)$, the estimates  
  \begin{equation}\label{small}
    \left\{
    \begin{array}{l}
      \displaystyle r^{4-m}\int_{B_r^+(x_0)}\big(|\Delta u|^2
      +r^{-2}|Du|^2\big)\,\d x<\eps_0,\\[3.5ex]
      \displaystyle\|g\|_{C^{4,\alpha}(B_r^+(x_0))}\le \Lambda
    \end{array}
    \right.
  \end{equation}
  imply $u\in C^4(B_{r/2}^+(x_0),N)$ with $\|Du\|_{C^3(B_{r/2}^+(x_0))}\le
  c(\Lambda,\alpha,m,N)$. 
\end{cor}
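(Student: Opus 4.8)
The plan is to combine the three preceding results into a bootstrap argument: first upgrade the smallness hypothesis on the scale-invariant $L^2$-energy of $\Delta u$ to the $\eps$-regularity threshold, then apply Theorem~\ref{epsreg} to obtain smoothness on $B_{3r/4}^+(x_0)$, and finally invoke Lemma~\ref{lem:uniform} to get the quantitative $C^3$-bound on the gradient on $B_{r/2}^+(x_0)$. By the usual rescaling $u_r(y):=u(x_0+ry)$, $g_r(y):=g(x_0+ry)$, the hypotheses and the conclusion are invariant, so it suffices to treat the case $x_0\in\R^m_+$, $r=1$; note, however, that the $C^{4,\alpha}$-norm of $g$ rescales with favourable powers of $r\le1$, so the bound $\|g_r\|_{C^{4,\alpha}}\le\Lambda$ is preserved. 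One must also distinguish the interior case $B_1(x_0)\subset\R^m_+$, where the analogous interior $\eps$-regularity from \cite{Wang} and the interior version of Lemma~\ref{lem:uniform} apply, from the genuine boundary case $x_0\in\partial\R^m_+$ (or $x_0$ close to the boundary), where one uses the boundary statements; the two cases are handled in parallel and we focus on the boundary one.

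First I would fix $\Lambda$ and choose $\eps_0$ at the end, small enough to make all the following steps work. Starting from \eqref{small}, the first quantitative input is Lemma~\ref{lemma:morrey}, applied with $R=1$ and $a=x_0$: it converts the single scale-invariant bound $\int_{B_1^+(x_0)}(|\Delta u|^2+|Du|^2)\,\d x<\eps_0$ into the Morrey bound
\begin{equation*}
  \sup_{B_\rho^+(y)\subset B_{1/2}^+(x_0)}\rho^{4-m}\int_{B_\rho^+(y)}\big(|D^2u|^2+\rho^{-2}|Du|^2\big)\,\d x
  \le
  c_1\eps_0+c_2,
\end{equation*}
where $c_1=c_1(m,N,\|Dg\|_{C^2})$ and $c_2=c_2(m,N,\|Dg\|_{C^2})$ with $c_2\to0$ as $\|Dg\|_{C^2}\to0$. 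Since $\|Dg\|_{C^2}\le\Lambda$ only gives an upper bound, $c_2$ need not be small; the cleaner route is to note that in Lemma~\ref{lemma:morrey} the term $c_2R$ comes from the $\|Dg\|_{C^1}^2$ and $K$ contributions, both of which carry an extra positive power of the radius, so after rescaling to radius $r\le 1$ these error terms are $O(r)$. Thus, by first passing to a small enough radius (absorbing a factor into $\eps_0$), one arranges that the right-hand side of the Morrey estimate is $<\eps_1^2$ where $\eps_1$ is the threshold of Theorem~\ref{epsreg}; in particular $\rho^{4-m}\int_{B_\rho^+(y)}(|\Delta u|^2+\rho^{-2}|Du|^2)\,\d x+\rho<\eps_1$ for every admissible $B_\rho^+(y)\subset B_{1/2}^+(x_0)$, using $|\Delta u|\le c|D^2u|$ and the smallness of the radius for the additive $\rho$. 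Applying Theorem~\ref{epsreg} on each such ball yields $u\in C^\infty(B_{1/2}^+(x_0),N)$, and in particular $u\in C^4(\overline{B_{3/8}^+(x_0)},N)$ after a further (harmless) shrinking of the radius.

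Next I would translate the smallness of the scale-invariant $\int|\Delta u|^2$ into the pointwise Morrey smallness of $|Du|^2$ that Lemma~\ref{lem:uniform} requires. This is exactly \eqref{Du-estimate} from the proof of Lemma~\ref{lemma:morrey} (together with \eqref{Morrey-boundary}): $\sup_{B_\rho^+(y)\subset B_{1/4}^+(x_0)}\rho^{2-m}\int_{B_\rho^+(y)}|Du|^2\,\d x\le c\,\eps_0+O(r)$, which, after the same small-radius reduction, is $<\eps(\delta,\Lambda,\alpha,m,N)$ for the $\eps$ and any fixed $\delta\in(0,1)$ appearing in Lemma~\ref{lem:uniform}. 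Now $u$ restricted to the relevant half-ball is a $C^4$ biharmonic map satisfying \eqref{smalldelta}, so Lemma~\ref{lem:uniform} gives $\sum_{k=1}^4|D^k u(x)|^{1/k}\le\delta/(1-|x|)$ on that half-ball, hence $|Du|\le c(\delta,m,N)$ and indeed $|D^k u|\le c$ for $k=1,\dots,4$ on $B_{r/2}^+(x_0)$ after rescaling back; a final application of interior/boundary Schauder estimates for the biharmonic system \eqref{boundary-problem} (whose right-hand side is now bounded in $C^{0,\alpha}$, using $\|g\|_{C^{4,\alpha}}\le\Lambda$ and the just-obtained $C^3$-bound on $u$) upgrades this to $\|Du\|_{C^3(B_{r/2}^+(x_0))}\le c(\Lambda,\alpha,m,N)$. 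Collecting the finitely many smallness requirements on $\eps_0$ imposed along the way fixes $\eps_0=\eps_0(\Lambda,\alpha,m,N)$, completing the proof.

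The main obstacle is bookkeeping rather than a deep new idea: one must be careful that the error terms $c_2R$, $\widetilde K R$ etc.\ produced by the monotonicity/Morrey machinery do \emph{not} shrink merely because $\|Dg\|_{C^2}$ is bounded — they shrink because they carry an extra power of the radius, which forces the rescaling-to-small-radius step and a corresponding adjustment of $\eps_0$. A second point of care is that Theorem~\ref{epsreg} and Lemma~\ref{lem:uniform} are stated for half-balls centred on $\partial\R^m_+$, so for a general centre $x_0\in\R^m_+$ one splits into the interior regime (where $B_1(x_0)\Subset\R^m_+$ and one uses the interior counterparts cited from \cite{Wang} and the interior analogue of Lemma~\ref{lem:uniform}) and the near-boundary regime (where $B_1^+(x_0)$ is comparable to a half-ball centred at $(x_0',0)$ and the boundary statements apply after enlarging the domain of integration, exactly as in the final paragraph of the proof of Lemma~\ref{lemma:morrey}); both regimes are routine once the scale-invariant smallness is in hand.
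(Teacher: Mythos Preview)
Your proposal is correct and follows essentially the same route as the paper: rescale to $r=1$, apply Lemma~\ref{lemma:morrey} at a small intermediate scale $R_0$ so that the error term $c_2R_0$ is small while the energy term picks up a harmless factor $R_0^{2-m}$ absorbed into $\eps_0$, then invoke Theorem~\ref{epsreg} and Lemma~\ref{lem:uniform} (together with their interior counterparts) and cover $B_{1/2}^+(x_0)$ by such small balls. The paper makes this two-parameter choice explicit---first $R_0$, then $\eps_0$, yielding the bound $cR_0^{2-m}\eps_0+cR_0$---which is clearer than your ``rescaling to small radius'' phrasing; also your final Schauder step is unnecessary, since Lemma~\ref{lem:uniform} with $\delta=1$ already gives the full quantitative bound $\|Du\|_{C^3}\le c(\Lambda,\alpha,m,N)$.
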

\begin{proof}
  By a scaling argument, it suffices to consider the case $r=1$. 
  In the case that $B_{3/4}^+(x_0)$ intersects
  $\partial\R^m_+$,
  we apply Lemma~\ref{lemma:morrey} on balls $B_{1/4}^+(a)$ 
  for any $a\in B_{3/4}^+(x_0)\cap \partial\R^m_+$. In view of this
  lemma and assumption~\eqref{small}$_1$, we infer 
  \begin{align*}
    \sup_{B_\rho^+(y)\subset B_{R_0}^+(a)}\,
    \rho^{4-m}\int_{B_\rho^+(y)}\big(|D^2u|^2+\rho^{-2}|Du|^2\big)\,\d x
    \le
    cR_0^{2-m}\eps_0+cR_0,
  \end{align*}
  for any $R_0\in(0,\tfrac18)$.
  By choosing first $R_0$ and then $\eps_0$ small enough in dependence on
  $\Lambda,\alpha, m$, and $N$,
  we can ensure that the assumptions of
  Theorem~\ref{epsreg} and Lemma~\ref{lem:uniform} with $\delta=1$
  are satisfied on
  the ball $B_{R_0}^+(a)$,
  which imply that $u\in C^4(B_{R_0/2}^+(a))$ with 
  $\|Du\|_{C^3(B_{R_0/4}^+(a))}\le c$. 
  We note that the application of Lemma~\ref{lem:uniform} is possible
  after a suitable rescaling.
  Since the last estimate holds for
  any $a\in B_{3/4}^+(x_0)\cap \partial\R^m_+$, we deduce the asserted
  gradient estimates in every point $y\in B_{1/2}^+(x_0)$ with $y_m<\frac14
  R_0$. In the remaining case $y\in B_{1/2}^+(x_0)$ with $y_m\ge\tfrac14 R_0$, we apply the
  corresponding interior estimates on the ball $B_{R_0/4}(y)$,
  cf. \cite[Thm. 2.6]{Scheven1}. In this way, we arrive at the desired
  bound $\|Du\|_{C^3(B_{1/2}^+(x_0))}\le c(\Lambda,\alpha,m,N)$.
\end{proof}

\section{Compactness for sequences of variationally biharmonic maps}
\label{sec:compact}

\subsection{The defect measure}
Our goal is to prove compactness for
a sequence of variationally biharmonic maps $u_i\in
W^{2,2}(B_4^+,N)$ with respect to Dirichlet values $g_i\in
C^{4,\alpha}(B_4^+,N)$ on $T_4$. We assume that the sequence is
bounded in the sense that 
\begin{align}\label{W22-bound}
  \sup_{i\in\N}\,(\|D^2u_i\|_{L^2(B_4^+)}+\|Du_i\|_{L^2(B_4^+)})<\infty
  \ \quad\mbox{and}\ \quad
  \sup_{i\in\N}\|g_i\|_{C^{4,\alpha}(B_4^+)}<\infty.
\end{align}
More precisely, we consider the slightly more general case of maps
with \eqref{weakly-bi} and \eqref{stationary-bi} instead of
variationally biharmonic maps, since the properties \eqref{weakly-bi}
and \eqref{stationary-bi} are clearly preserved under strong
convergence in $W^{2,2}$.
In view of \eqref{W22-bound}, 
Lemma~\ref{lemma:morrey} implies the Morrey space bound
\begin{equation}\label{bound:morrey}
  \sup_{i\in\N}\Big(|D^2u_i\|^2_{L^{2,m-4}(B_2^+)}+\|Du_i\|^2_{L^{2,m-2}(B_2^+)}
  +\|Dg_i\|_{C^{4,\alpha}(B_2^+)}^2\Big)\le\Lambda
\end{equation}
for a constant $\Lambda\ge1$ that depends on $m,N$, and the sequences
$(u_i)$ and $(g_i)$. Here we used the Morrey type norms that are defined by 
\begin{equation}\label{def:morrey-norm}
  \|f\|_{L^{2,\lambda}(B_2^+)}^2
  :=
  \sup_{B_\rho^+(y)\subset
    B_2^+}\frac1{\rho^\lambda}\int_{B_\rho^+(y)}|f|^2\,\d x,
\end{equation}
for $f\in L^2(B_2^+)$. 
From now on, we restrict ourselves to the ball $B_2^+$ and assume that
a bound of the type~\eqref{bound:morrey} is valid. 
This bound implies in particular that the sequence of Radon
measures $\Lm\edge|\Delta u_i|^2$ is bounded on $\overline B_2^+$. 
Therefore, possibly after passing to a
subsequence, we infer a map $u\in W^{2,2}(B_2^+,N)$ and a Radon
measure $\nu$ on $\overline B_2^+$ with $u_i\wto u$ weakly in
$W^{2,2}(B_2^+,\R^L)$, strongly in $W^{1,2}(B_2^+,\R^L)$ and a.e., and moreover 
\begin{align*}
  \Lm\edge|\Delta u_i|^2\wsto\Lm\edge|\Delta u|^2+\nu
\end{align*}
weakly*  in the space of Radon measures, as $i\to\infty$. 
The lower semicontinuity of the $L^2$-norm 
with respect to weak convergence 
implies $\nu\ge 0$. We call the measure $\nu$ the 
\emph{defect measure} of the sequence $u_i$, since it detects a
possible lack of strong convergence in $W^{2,2}$, see Lemma
\ref{defect} below.
The pair $(u,\nu)$ can be considered 
as the limit
configuration of the sequence $u_i$. This motivates the following 
\begin{defn}\label{def:BLambda}
  For sequences of maps $u_i\in W^{2,2}(B_2^+,N)$ and nonnegative 
  Radon measures $\nu_i$ on $\overline B_2^+$, where $i\in\N_0$, 
  we write $(u_i,\nu_i)\Bto(u_0,\nu_0)$ as $i\to\infty$ if and only if 
  convergence holds in the following sense. 
  \begin{align*}\nonumber
    \left\{
    \begin{array}{cl}
   u_i\wto u_0&\mbox{weakly in $W^{2,2}(B_2^+,\R^L)$}\\
   u_i\to u_0&\mbox{strongly in $W^{1,2}(B_2^+,\R^L)$ and a.e.,}\\
   \Lm\edge|\Delta u_i|^2+\nu_i\wsto\Lm\edge|\Delta u_0|^2+\nu_0&
    \mbox{weakly$^*$ as Radon measures.}
    \end{array}\right.
 \end{align*}
 For the set of all limit configurations of biharmonic maps, we
 write
 \begin{equation*}
   \B:=\left\{
        (u,\nu)\left|
        \begin{array}{l}
           (u_i,0)\Bto (u,\nu)\mbox{ for maps $u_i\in
          W^{2,2}(B_2^+,N)$ that satisfy}\\
          \mbox{\eqref{weakly-bi} and \eqref{stationary-bi}, 
          attain the boundary values $g_i\in C^\infty(B_2^+,N)$}\\
          \mbox{in the sense 
          $(u_i,Du_i)=(g_i,Dg_i)$ on $T_2$, and that satisfy}\\
          \mbox{$\|D^2u_i\|_{L^{2,m-4}(B_2^+)}^2+\|Du_i\|_{L^{2,m-2}(B_2^+)}^2+\|g_i\|_{C^{4,\alpha}(B_2^+)}^2\le\Lambda$}
         \end{array}
       \right.\right\}
 \end{equation*}
  for a given constant $\Lambda\ge1$ and $\alpha\in(0,1)$.
  Here, $"0"$ denotes the zero measure and we used the Morrey norms
  defined in \eqref{def:morrey-norm}.
  For a given pair $\mu=(u,\nu)\in\B$, we define the {\em energy concentration
  set} $\Sigma_\mu$ as the set of points $a\in \overline B_2^+$ with the
  property
  \begin{equation*}
    \liminf_{\rho\searrow 0}\left(\rho^{4-m}\int_{B_\rho^+(a)}(|\Delta
      u|^2+\rho^{-2}|Du|^2)\,\d x+\rho^{4-m}\nu(B_\rho^+(a))\right)\ge \eps_0,
  \end{equation*}
  where the constant $\eps_0=\eps_0(\Lambda,\alpha,m,N)>0$
  is chosen according to Corollary \ref{cor:uniform}.
\end{defn}

The following lemma clarifies the meaning of the defect measure and the energy
concentration set.
\begin{lem}\label{defect}
  Assume that $u_i\in W^{2,2}(B_2^+,N)$, $i\in\N$, is a sequence of
  maps with \eqref{weakly-bi}, \eqref{stationary-bi}, boundary values
  \begin{equation*}
    (u_i,Du_i)=(g_i,Dg_i)\qquad\mbox{on $T_2$ in the sense of traces},
  \end{equation*}
  so that the bound
  \begin{equation}\label{Lambda-bound}
    \sup_{i\in\N}\big(\|D^2u_i\|_{L^{2,m-4}(B_2^+)}^2
  +\|Du_i\|_{L^{2,m-2}(B_2^+)}^2+\|g_i\|_{C^{4,\alpha}(B_2^+)}^2\big)\le\Lambda
\end{equation}
  is satisfied. Moreover, we assume that 
  $(u_i,0)\Bto(u,\nu)=:\mu$ as $i\to\infty$,
  for some $(u,\nu)\in\B$.
  Then there holds
  \begin{enumerate}
  \item  $u_i\to u$ in $C^3_{\textup{loc}}(
    B_2^+{\setminus}\Sigma_\mu,\R^L)$ as
         $i\to\infty$.
   \item   If the defect measure satisfies
    $\spt\nu\cap \overline B_1^+=\varnothing$, then 
    we have strong convergence $u_i\to u$ in
    $W^{2,2}(B_{1}^+,\R^L)$, as $i\to\infty$.
  \end{enumerate}
\end{lem}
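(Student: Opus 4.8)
\textbf{Part (i): convergence away from the concentration set.}
The plan is to show that every point $a\in \overline B_2^+\setminus\Sigma_\mu$ has a neighbourhood on which the $u_i$ converge in $C^3$. By definition of $\Sigma_\mu$, if $a\notin\Sigma_\mu$ there is a sequence of radii $\rho_k\downarrow 0$ with
\begin{equation*}
  \rho_k^{4-m}\int_{B_{\rho_k}^+(a)}\big(|\Delta u|^2+\rho_k^{-2}|Du|^2\big)\,\d x
  +\rho_k^{4-m}\nu(B_{\rho_k}^+(a))<\eps_0
\end{equation*}
for some fixed $\rho_k$. Using the weak$^*$ convergence $\Lm\edge|\Delta u_i|^2+0\wsto \Lm\edge|\Delta u|^2+\nu$ together with the strong $W^{1,2}$-convergence of $u_i$, one passes to the limit in the energy on a slightly smaller ball (choosing a radius of continuity for the limiting measure of $\partial B_\rho^+(a)$ to avoid boundary concentration of the measures) and concludes that for $i$ large,
\begin{equation*}
  \rho^{4-m}\int_{B_\rho^+(a)}\big(|\Delta u_i|^2+\rho^{-2}|Du_i|^2\big)\,\d x<\eps_0.
\end{equation*}
If $a$ is a boundary point, the boundary values $g_i$ are uniformly bounded in $C^{4,\alpha}$ by \eqref{Lambda-bound}, so Corollary~\ref{cor:uniform} applies and gives $u_i\in C^4(B_{\rho/2}^+(a))$ with $\|Du_i\|_{C^3(B_{\rho/2}^+(a))}\le c(\Lambda,\alpha,m,N)$, uniformly in $i$. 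If $a$ is an interior point, the same conclusion follows from the interior counterpart of Corollary~\ref{cor:uniform} (cf.\ \cite[Thm.~2.6]{Scheven1}). A covering argument together with Arzel\`a--Ascoli then upgrades the weak $W^{2,2}$-convergence (and a.e.\ convergence) to $C^3_{\mathrm{loc}}$-convergence on $B_2^+\setminus\Sigma_\mu$, and the limit is necessarily $u$.

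\textbf{Part (ii): strong $W^{2,2}$-convergence when $\spt\nu\cap\overline B_1^+=\varnothing$.}
The key observation is that $\Sigma_\mu\cap\overline B_1^+$ must then be finite, in fact $\mathcal H^{m-4}$-negligible: for $a\in\Sigma_\mu\cap\overline B_1^+$ the concentration can only come from the absolutely continuous part, so $\liminf_{\rho\to0}\rho^{4-m}\int_{B_\rho^+(a)}(|\Delta u|^2+\rho^{-2}|Du|^2)\,\d x\ge\eps_0$; since $|\Delta u|^2+|Du|^2\in L^1$, a standard density/Vitali-covering argument bounds $\mathcal H^{m-4}(\Sigma_\mu\cap\overline B_1^+)$ and in particular shows it has zero $\Lm$-measure (indeed $\Sigma_\mu$ has finite $(m-4)$-measure). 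Now fix any $\varphi\in C^0(\overline B_1^+)$; testing the weak$^*$ convergence $\Lm\edge|\Delta u_i|^2\wsto\Lm\edge|\Delta u|^2$ (on $\overline B_1^+$, where $\nu$ vanishes) gives $\int_{B_1^+}|\Delta u_i|^2\varphi\,\d x\to\int_{B_1^+}|\Delta u|^2\varphi\,\d x$; taking $\varphi\equiv 1$ (approximating by a cutoff equal to $1$ on $B_1^+$ inside $B_2^+$, and using that $\nu(\overline B_1^+)=0$ controls the error) yields $\|\Delta u_i\|_{L^2(B_1^+)}\to\|\Delta u\|_{L^2(B_1^+)}$. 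Combined with the weak convergence $\Delta u_i\wto\Delta u$ in $L^2(B_1^+)$, the Radon--Riesz property of the Hilbert space $L^2$ gives $\Delta u_i\to\Delta u$ strongly in $L^2(B_1^+)$. Since $u_i=g_i$ with $Du_i=Dg_i$ on $T_1$ and the $g_i$ are uniformly bounded in $C^{4,\alpha}$, an interpolation together with the strong $W^{1,2}$-convergence and standard $L^2$-elliptic estimates for the Laplacian (applied to $u_i-u_j$ with a cutoff, controlling the difference by the $L^2$-norm of $\Delta(u_i-u_j)$ plus lower-order terms) turns strong convergence of the Laplacians into strong convergence in $W^{2,2}(B_1^+,\R^L)$.

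\textbf{Main obstacle.}
The delicate point is the passage to the limit in the localized energies in Part~(i): one must avoid the defect measure (or the weak$^*$ limit measure) charging the \emph{boundary} spheres $\partial B_\rho^+(a)$ of the test balls, so the radii have to be chosen off an at most countable exceptional set; and one must make sure the smallness condition with the \emph{full} scaled energy $\rho^{4-m}\int(|\Delta u_i|^2+\rho^{-2}|Du_i|^2)$ (not merely the $\Delta u$-part) survives the limit, which uses the strong $W^{1,2}$-convergence for the gradient term. In Part~(ii), the analogous subtlety is controlling the error when replacing the cutoff test function by the constant $1$ on $B_1^+$: this is exactly where the hypothesis $\spt\nu\cap\overline B_1^+=\varnothing$ enters, guaranteeing $\nu(\overline B_1^+)=0$ so that no mass of the defect measure is lost near $\partial B_1^+$.
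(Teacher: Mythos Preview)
Your Part~(i) is the paper's argument essentially verbatim: choose a radius $\rho$ with $\nu(\partial B_\rho^+(a))=0$, pass to the limit in the localized energy, apply Corollary~\ref{cor:uniform}, and conclude by Arzel\`a--Ascoli.

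For Part~(ii) your route is correct but genuinely different from the paper's. The paper proceeds by a covering argument: it first shows $\mathcal H^{m-4}(\Sigma_\mu\cap\overline B_1^+)=0$ (using Gagliardo--Nirenberg to obtain $Du\in L^4$, then a standard density result), covers $\Sigma_\mu\cap\overline B_1^+$ by balls with $\sum\rho_k^{m-4}<\eps$, controls $\int_A|D^2u_i-D^2u|^2$ on the cover $A$ by the uniform Morrey bound~\eqref{Lambda-bound}, and on the complement uses the $C^3$-convergence from Part~(i). Your Radon--Riesz argument is more direct: it uses only measure theory and the $L^2$-estimate for the Laplacian, and bypasses Part~(i), the Morrey bound, and the fine structure of $\Sigma_\mu$ altogether. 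What the paper's approach buys is that it never needs to enlarge the ball; what yours buys is conceptual simplicity.

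Two small points deserve attention. First, your claim that $\Sigma_\mu\cap\overline B_1^+$ is $\mathcal H^{m-4}$-negligible does not follow from the Vitali-type argument you sketch: from $|\Delta u|^2+|Du|^2\in L^1$ and the density condition with the factor $\rho^{-2}|Du|^2$ one only gets finite $\mathcal H^{m-4}$-measure; the paper needs $|Du|^4\in L^1$ (via Gagliardo--Nirenberg) to conclude measure zero. This is harmless for you, since your norm-convergence argument does not use the size of $\Sigma_\mu$ at all, but the sentence as written is inaccurate. Second, the elliptic upgrade from $\Delta u_i\to\Delta u$ in $L^2(B_1^+)$ to $D^2u_i\to D^2u$ in $L^2(B_1^+)$ via a cutoff requires the cutoff to be supported in a slightly larger half-ball $B_{1+\delta}^+$, and hence requires strong convergence of the Laplacians there. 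This is available because $\spt\nu$ is closed and disjoint from the compact set $\overline B_1^+$, so $\nu(\overline B_{1+\delta}^+)=0$ for some $\delta>0$; you should make this enlargement step explicit, since without it the cutoff estimate (cf.~\eqref{L2-estimate}) does not close on $B_1^+$.
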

\begin{proof}
  In order to prove (i), we choose an arbitrary point
  $a\in B_2^+{\setminus}\Sigma_\mu$.  By the definition of
  $\Sigma_\mu$, we may choose a $\rho\in(0,1)$ with
  \begin{equation*}
    \rho^{4-m}\int_{B_\rho^+(a)}(|\Delta u|^2
    +\rho^{-2}|Du|^2)\,\d x+\rho^{4-m}\nu(B_\rho^+(a))<\eps_0.
  \end{equation*}
  By slightly diminishing the value of $\rho$ if necessary, we can
  additionally achieve that $\nu(\partial B_\rho^+(a))=0$, because
  $\nu(\partial B_\rho^+(a))>0$ can hold at most for countably many
  values of $\rho\in(0,1)$. Using the convergence
  $(u_i,0)\Bto(u,\nu)$, we conclude
  \begin{align*}
    &\lim_{i\to\infty}\rho^{4-m}\int_{B_\rho^+(a)}(|\Delta u_i|^2
    +\rho^{-2}|Du_i|^2)\,\d x\\
    &\qquad=
    \rho^{4-m}\int_{B_\rho^+(a)}(|\Delta u|^2
    +\rho^{-2}|Du|^2)\,\d x+\rho^{4-m}\nu(B_\rho^+(a))
    <\eps_0.
  \end{align*}
  Therefore, Corollary \ref{cor:uniform} yields the uniform estimate 
  $\|u_i\|_{C^4(B_{\rho/2}^+(a))}\le c(\Lambda,\alpha,m,N)$ for all
  sufficiently large $i\in\N$, from which
  we infer by Arz\'ela-Ascoli's theorem that $u_i\to u$ holds in
  $C^3(B_{\rho/2}^+(a),N)$ as $i\to\infty$. Since
  $a\in B_2^+{\setminus}\Sigma_\mu$ was arbitrary,
  this implies (i).\\
  For the proof of (ii), we note that in the case of
  $\spt\nu\cap\overline B_1^+=\varnothing$,
  the set $\Sigma_\mu\cap \overline B_1^+$ is given by 
  \begin{equation*}
    \Sigma_\mu\cap \overline B_1^+=
    \Big\{y\in \overline B_1^+:\liminf_{\rho\searrow 0}
    \rho^{4-m}\int_{B_\rho^+(y)}\big(|\Delta u|^2+\rho^{-2}|Du|^2\big)\,\d x\ge\eps_0\Big\}.
  \end{equation*}
  Since the values of $u$ are contained in the bounded manifold $N$,
  we have $u\in L^\infty\cap W^{2,2}(B_1^+,N)$, which implies by
  Gagliardo-Nirenberg embedding that $u\in W^{1,4}(B_1^+,N)$. Therefore,
  we infer from 
  H\"older's inequality and \cite[Lemma 3.2.2]{Ziemer} that
  $\HM^{m-4}(\Sigma_\mu\cap \overline B_1^+)=0$. This implies that for any given $\eps>0$,
  we can choose a cover
  $A:=\cup_{k\in\N}B_{\rho_k}(a_k)\supset\Sigma_\mu\cap \overline B_{1}^+$
  of open balls with radii $\rho_k\in(0,1)$ and centers
  $a_k\in \overline B_{1}^+$ so that $\sum_{k\in\N}\rho_k^{m-4}<\eps$.
  Hence, the bound~\eqref{Lambda-bound}
  yields the following estimate for all $i\in\N$.
  \begin{eqnarray*}
    \int_{A\cap B_{1}^+}|D^2u_i-D^2u|^2\,\d x
    &\le& 2\sum_{k\in\N}\int_{B_{\rho_k}^+(a_k)}\big(|D^2u_i|^2+|D^2u|^2\big)\,\d x\\
    &\le& c\Lambda\sum_{k\in\N}\rho_k^{m-4}
          \le c\Lambda\eps.
  \end{eqnarray*}
  On the compact set
  $\overline B_{1}^+\setminus A\subset\overline
  B_{1}^+{\setminus}\Sigma_\mu$, the conclusion (i) implies $u_i\to u$
  in $C^3(\overline B_{1}^+\setminus A,\R^L)$. Hence,
  \begin{equation*}
    \limsup_{i\to\infty}
    \int_{B_{1}^+}|D^2u_i-D^2u|^2\,\d x
    \le c\Lambda\eps\,+\,\lim_{i\to\infty}\int_{B_{1}^+\setminus A}|D^2u_i-D^2u|^2\,\d x
    = c\Lambda\eps.
  \end{equation*}
  Since $\eps>0$ was arbitrary, we conclude that $u_i\to u$ holds in
  $W^{2,2}(B_{1}^+,\R^L)$ in the limit $i\to\infty$, as claimed.
\end{proof}

Next, we analyse the relation between the defect measure and the energy
concentration set. 
\begin{lem}\label{structure}
  There are positive constants $c_1,c_2$, depending only on $m$, 
  so that every pair $\mu=(u,\nu)\in\B$ satisfies
  \begin{equation}\label{equiv}
    c_1\eps_0\HM^{m-4}\edge(\Sigma_\mu\cap \overline B_1^+)
    \le\nu\edge\overline B_1^+ 
    \le c_2\Lambda\HM^{m-4}\edge(\Sigma_\mu\cap \overline B_1^+).
  \end{equation}
  Furthermore, $\Sigma_\mu$ is a closed set and $\Sigma_\mu=\sing(u)\cup\spt(\nu)$.
\end{lem}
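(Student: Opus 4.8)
The plan is to establish three separate facts: the upper bound for $\nu$, the lower bound for $\nu$, and the structural identities for $\Sigma_\mu$. I would begin with the density characterisation of $\Sigma_\mu$. Fix $\mu=(u,\nu)\in\B$, realised as $(u_i,0)\Bto(u,\nu)$ with the uniform Morrey bound $\Lambda$. For $a\in\overline B_1^+$ and small $\rho$ with $\nu(\partial B_\rho^+(a))=0$ (which excludes only countably many $\rho$), the convergence $(u_i,0)\Bto(u,\nu)$ gives
\begin{equation*}
  \lim_{i\to\infty}\rho^{4-m}\!\!\int_{B_\rho^+(a)}\!\!\big(|\Delta u_i|^2+\rho^{-2}|Du_i|^2\big)\,\d x
  =\rho^{4-m}\!\!\int_{B_\rho^+(a)}\!\!\big(|\Delta u|^2+\rho^{-2}|Du|^2\big)\,\d x+\rho^{4-m}\nu(B_\rho^+(a)),
\end{equation*}
using that $u_i\to u$ strongly in $W^{1,2}$ for the gradient term. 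Hence $a\in\Sigma_\mu$ precisely when the left-hand side has $\liminf_{\rho\to0}\ge\eps_0$. Combined with Corollary~\ref{cor:uniform} (via Lemma~\ref{defect}(i)), this shows $a\notin\Sigma_\mu$ iff $u_i\to u$ in $C^3$ near $a$, which immediately yields $\Sigma_\mu$ is closed and $\sing(u)\subset\Sigma_\mu$; conversely off $\spt\nu$, $a\in\Sigma_\mu$ forces the $W^{1,4}$-energy of $u$ not to vanish fast enough, so $a\in\sing(u)$ by $\eps$-regularity, giving $\Sigma_\mu\subset\sing(u)\cup\spt\nu$. The reverse inclusion $\spt\nu\subset\Sigma_\mu$ follows because at a point $a\notin\Sigma_\mu$ we have $C^3$ convergence on a ball $B_{\rho/2}^+(a)$, whence $\nu=0$ there; this proves $\Sigma_\mu=\sing(u)\cup\spt\nu$.

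For the upper bound $\nu\edge\overline B_1^+\le c_2\Lambda\,\HM^{m-4}\edge(\Sigma_\mu\cap\overline B_1^+)$, I would first note $\nu$ is concentrated on $\Sigma_\mu$ by the previous paragraph (off $\Sigma_\mu$, $\nu=0$). Then I estimate the $\HM^{m-4}$-density of $\nu$ from above: for $a\in\Sigma_\mu\cap\overline B_1^+$ and admissible $\rho$,
\begin{equation*}
  \rho^{4-m}\nu(B_\rho^+(a))
  \le\lim_{i\to\infty}\rho^{4-m}\!\!\int_{B_\rho^+(a)}|\Delta u_i|^2\,\d x
  \le\sup_i\|D^2u_i\|_{L^{2,m-4}(B_2^+)}^2\le\Lambda.
\end{equation*}
So $\Theta^{*m-4}(\nu,a):=\limsup_{\rho\to0}\omega_{m-4}^{-1}(2\rho)^{4-m}\nu(B_\rho(a))$ is bounded by a dimensional constant times $\Lambda$ at every point of $\Sigma_\mu$, and $\nu$ vanishes elsewhere. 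A standard density comparison theorem for Radon measures (e.g.\ the upper density bound implies $\nu\le c\Lambda\,\HM^{m-4}$ on the set where the density is controlled — this is the Radon measure version used in \cite{Ziemer}) then gives the claimed inequality.

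For the lower bound $c_1\eps_0\,\HM^{m-4}\edge(\Sigma_\mu\cap\overline B_1^+)\le\nu\edge\overline B_1^+$, I would argue by a covering/density argument in the other direction. By definition of $\Sigma_\mu$, every $a\in\Sigma_\mu\cap\overline B_1^+$ satisfies
\begin{equation*}
  \liminf_{\rho\to0}\Big(\rho^{4-m}\!\!\int_{B_\rho^+(a)}\!\!\big(|\Delta u|^2+\rho^{-2}|Du|^2\big)\,\d x+\rho^{4-m}\nu(B_\rho^+(a))\Big)\ge\eps_0.
\end{equation*}
Since $u\in W^{2,2}\cap W^{1,4}$, the first two terms have $\HM^{m-4}$-a.e.\ vanishing liminf on $\overline B_1^+$ by the fine estimates of \cite[Lemma 3.2.2]{Ziemer} (absolute continuity of the $W^{1,4}$ and $W^{2,2}$ energies with respect to $\HM^{m-4}$); hence for $\HM^{m-4}$-a.e.\ $a\in\Sigma_\mu\cap\overline B_1^+$ we get $\liminf_{\rho\to0}\rho^{4-m}\nu(B_\rho^+(a))\ge\eps_0$, so the lower $\HM^{m-4}$-density of $\nu$ is at least $c_1\eps_0$ there. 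The corresponding density lower bound theorem for Radon measures then yields $\nu\edge(\Sigma_\mu\cap\overline B_1^+)\ge c_1\eps_0\,\HM^{m-4}\edge(\Sigma_\mu\cap\overline B_1^+)$, which suffices. The main obstacle is the careful bookkeeping in the two density-comparison steps — in particular, handling boundary half-balls $B_\rho^+(a)$ rather than full balls when $a\in T_1$, where one should either work with the reflected measure or note that $\HM^{m-4}$-densities are insensitive to the flat boundary up to a fixed dimensional constant, which is exactly why the constants $c_1,c_2$ are allowed to depend only on $m$.
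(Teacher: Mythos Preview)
Your proposal is correct and follows essentially the same route as the paper: the identity $\Sigma_\mu=\sing(u)\cup\spt(\nu)$ via $\eps$-regularity and Lemma~\ref{defect}, the upper bound from the uniform Morrey estimate, and the lower bound by first removing (via \cite[Lemma~3.2.2]{Ziemer}) an $\HM^{m-4}$-null set on which the $u$-energy density fails to vanish. The only difference is presentational: where you invoke the standard upper/lower density comparison theorems for Radon measures as black boxes, the paper writes out the corresponding covering arguments directly (a straightforward cover for the upper bound and a Vitali covering for the lower bound).
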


\begin{proof}
  The inclusion $\sing(u)\cup\spt(\nu)\subset\Sigma_\mu$ holds by
  Corollary~\ref{cor:uniform} and 
  Lemma~\ref{defect}\,(i). For the converse inclusion, we assume that
  there is some point $a\in\Sigma_\mu\setminus\sing(u)$. By the choice of
  $a$, the functions $|\Delta u|$ and $|Du|$ are bounded on a neighborhood
  of $a$. Therefore, the definition of $\Sigma_\mu$ implies
  \begin{equation*}
    \liminf_{\rho\searrow 0}\rho^{4-m}\nu(B_\rho^+(a))\ge\eps_0,
  \end{equation*}
  from which we infer $a\in\spt(\nu)$. We have thus proven
  $\Sigma_\mu=\sing(u)\cup\spt(\nu)$, which implies in particular that
  $\Sigma_\mu$ is a closed set.\\
  Now we turn our attention to the proof of \eqref{equiv}. For a Borel
  set $A\subset\Sigma_\mu\cap\overline B_1^+$, we choose an arbitrary cover
  $\cup_{j\in\N}B_{\rho_j}(a_j)\supset A$ of balls with radii
  $\rho_j\in(0,1)$ and centers $a_j\in\overline B_1^+$. Since
  $(u,\nu)\in\B$, we infer that 
  \begin{equation*}
    \nu(A)\le\sum_{j\in\N}\nu(B_{\rho_j}(a_j))
    \le c_2\Lambda\sum_{j\in\N}\rho_j^{m-4}
  \end{equation*}
  holds true, for a constant $c_2=c_2(m)$. 
  Since the cover of $A$ was arbitrary, we conclude
  $\nu(A)\le c_2\Lambda\HM^{m-4}(A)$.
  For the proof of the first
  estimate in \eqref{equiv}, we choose a set $E\subset\Sigma_\mu$ with
  $\HM^{m-4}(E)=0$ and
  \begin{equation*}
    \lim_{\rho\searrow 0}\rho^{4-m}\int_{B_\rho^+(a)}\big(|\Delta u|^2+|Du|^4\big)\,\d x=0
    \qquad\mbox{for all }a\in \Sigma_\mu\setminus E.
  \end{equation*}
  A set with this property exists by \cite[Lemma 3.2.2]{Ziemer}. From
  the choice of $E$ and the definition of $\Sigma_\mu$, we know
  \begin{equation}\label{positive}
    \liminf_{\rho\searrow 0}\rho^{4-m}\nu(B_\rho^+(a))\ge\eps_0
    \qquad\mbox{for all }a\in\Sigma_\mu\setminus E.
  \end{equation}
  Let $\eps>0$ be given. By the definition of the Hausdorff measure,
  we may choose $\delta>0$ small enough to ensure
  \begin{equation*}
    \HM^{m-4}(A\setminus E)
    \le\eps+\inf\Big\{\alpha(m{-}4)\sum_{j\in\N}r_j^{m-4}\ \Big|\
    A\setminus E\subset\bigcup_{j\in\N}B_{r_j}(a_j),\ 0<r_j\le\delta\,\Big\},
  \end{equation*}
  where $\alpha(m-4)$ denotes the volume of the $(m-4)$-dimensional
  unit ball.  Since $\nu$ is a Radon measure, we may choose an open
  set $O_\eps\supset A$ with $\nu(O_\eps\cap\R^m_+)\le\nu(A)+\eps$.  We consider
  the family of balls $B_{\rho_j}(a_j)\subset O_\eps$ with
  $0<\rho_j\le\delta/5$, centers $a_j\in A\setminus E$ and the property
  \begin{equation}\label{positive2}
    \rho_j^{4-m}\nu(B_{\rho_j}^+(a_j))\ge\frac{\eps_0}2.
  \end{equation}
  By \eqref{positive}, the union of all balls with these properties
  covers the set $A\setminus E$.
  A Vitali type covering argument therefore yields
  the existence of a countable disjoint family
  $\{B_{\rho_j}(a_j)\}_{j\in\N}$ of balls with the property
  \eqref{positive2}, $B_{\rho_j}(a_j)\subset O_\eps$ and
  $A\setminus E\subset\,\cup_jB_{5\rho_j}(a_j)$, where
  $5\rho_j\le\delta$ for all $j\in\N$. This implies
  \begin{equation*}
    \HM^{m-4}(A)=\HM^{m-4}(A\setminus E)\le \eps+\alpha(m{-}4)\sum_{j\in\N}
    (5\rho_j)^{m-4}
  \end{equation*}
  by the choice of $\delta$. Since the balls $B_{\rho_j}(a_j)$ are pairwise disjoint and
  satisfy \eqref{positive2}, we can further estimate
  \begin{equation*}
    \sum_{j\in\N}\rho_j^{m-4}
    \le
    \frac2{\eps_0}\sum_{j\in\N}\nu(B_{\rho_j}^+(a_j))
    \le\frac2{\eps_0}\nu(O_\eps\cap\R^m_+)\le\frac2{\eps_0}\big(\nu(A)+\eps\big).
  \end{equation*}
  Combining the last two estimates, we arrive at
  $$\eps_0\HM^{m-4}(A)\le \eps_0\eps+c_1^{-1}(\nu(A)+\eps)$$
  with a constant $c_1=c_1(m)>0$. Since $\eps>0$ was arbitrary, we
  can omit the terms involving $\eps$ in the preceding estimate. 
  This completes the proof of the lemma.
\end{proof}

For the blow-up analysis of the defect measure, the case of a constant
limit map is of particular interest. In this case, the defect measure
inherits a monotonicity property from the sequence of biharmonic maps.
\begin{lem}\label{monodefect}
  We consider a pair $(c,\nu)\in\B$, where $c\in N$ denotes a constant
  map. More precisely, we assume that $(c,\nu)$ can be approximated by
  biharmonic maps $u_i$ in the sense $\B\ni (u_i,0)\Bto(c,\nu)$, where
  the approximating maps satisfy the boundary condition
  $(u_i,Du_i)=(g_i,Dg_i)$ on $T_2$, for 
  boundary values $g_i\in C^\infty(B_2^+,N)$ with 
  \begin{equation}\label{C4-conv-bdry-values}
    g_i\to c
    \mbox {\qquad in $C^4(B_2^+,N)$, as $i\to\infty$}. 
\end{equation}
  Then, the functions
  $$(0,1]\ni r\mapsto r^{4-m}\nu(B_r^+(a))$$
  are monotonically nondecreasing for every $a\in \overline
  T_1$.
  Moreover, there is a subsequence $\{i_j\}\subset\N$ so that
  \begin{equation}\label{Phi-converge}
    \left\{
    \begin{array}{ll}
      \displaystyle\lim_{j\to\infty}\Phi_{u_{i_j}}(a;r)=r^{4-m}\nu(B_r^+(a)),\\
      \displaystyle\lim_{j\to\infty} K_{i_j}\Psi_{u_{i_j}}(a;\rho,r)\to 0
    \end{array}
    \right.
  \end{equation}
  for a.e. $\rho,r\in(0,1]$, as $j\to\infty$, where
  the terms $\Phi_{u_i}$, $\Psi_{u_i}$ and $K_i$ are 
  defined in Theorem~\ref{thm:monotonicity}. 
\end{lem}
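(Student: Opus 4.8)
Fix $a\in\overline T_1$. The guiding idea is that the constant limit map forces every error term in the boundary monotonicity formula to vanish, after which one simply passes to the limit in that formula. I would first record the elementary consequences of the hypotheses. Since $g_i\to c$ in $C^4(B_2^+,N)$, we have $\|Dg_i\|_{C^2(B_2^+)}\to0$, so the constants $\chi_i,K_i\ge0$ associated with $u_i$ in Theorem~\ref{thm:monotonicity} satisfy $\chi_i,K_i\to0$. Since $u_i\to c$ strongly in $W^{1,2}$, the numbers $\eps_i:=\int_{B_2^+}|Du_i|^2\,\d x$ tend to $0$. Since $\Delta c=0$, the hypothesis $(u_i,0)\Bto(c,\nu)$ reduces to $\Lm\edge|\Delta u_i|^2\wsto\nu$. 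Set $M:=\sup_i\int_{B_2^+}|D^2u_i|^2\,\d x<\infty$.

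Next I would extract a subsequence adapted to the radial slices
\[
  f_i(s):=\int_{S_s^+(a)}|Du_i|^2\,\d\HM^{m-1},\qquad
  h_i(s):=\int_{S_s^+(a)}|D^2u_i|^2\,\d\HM^{m-1}.
\]
By Fubini's theorem, $\|f_i\|_{L^1(0,2)}=\eps_i\to0$ and $\|h_i\|_{L^1(0,2)}\le M$, and the Cauchy--Schwarz inequality gives $\int_0^2\big(f_i(s)(f_i(s)+s^2h_i(s))\big)^{1/2}\,\d s\le(\eps_i+4M)^{1/2}\eps_i^{1/2}\to0$. Moreover, since $m\ge5$, the coarea formula yields, for every $\rho_0\in(0,1)$, the uniform bound
\[
  \int_{\rho_0}^1 s^{6-m}h_i(s)\,\d s
  =\int_{B_1^+(a)\setminus B_{\rho_0}^+(a)}\frac{|D^2u_i|^2}{|x-a|^{m-6}}\,\d x
  \le c(m,\rho_0)\,M,
\]
so $K_i\,s^{6-m}h_i(s)\to0$ in $L^1(\rho_0,1)$. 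Hence, passing to a subsequence $\{i_j\}$ (diagonalising over $\rho_0=\tfrac1n$, $n\in\N$), I may assume that for a.e.\ $s\in(0,1]$ one has $f_{i_j}(s)\to0$, $\big(f_{i_j}(s)(f_{i_j}(s)+s^2h_{i_j}(s))\big)^{1/2}\to0$ and $K_{i_j}s^{6-m}h_{i_j}(s)\to0$.

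Now I would pass to the limit in $\Phi$ and $K\Psi$. For all but countably many $r\in(0,1]$ one has $\nu(S_r^+(a))=0$, and for such $r$ the convergence $\Lm\edge|\Delta u_{i_j}|^2\wsto\nu$ gives $r^{4-m}\int_{B_r^+(a)}|\Delta u_{i_j}|^2\,\d x\to r^{4-m}\nu(B_r^+(a))$, while $e^{\chi_{i_j}r}\to1$. For the boundary term in \eqref{Def-Phi} I would insert the identity $\partial_X|Du|^2=2D(\partial_Xu)\cdot Du-2|Du|^2$ already used in the proof of Lemma~\ref{lemma:morrey}, and estimate, on $S_r^+(a)$, $|\partial_Xu|^2\le r^2|Du|^2$ and $|D(\partial_Xu)|^2\le 2|Du|^2+2r^2|D^2u|^2$; together with Cauchy--Schwarz this bounds the modulus of that term by $c\,r^{3-m}\big(f_{i_j}(r)+(f_{i_j}(r)(f_{i_j}(r)+r^2h_{i_j}(r)))^{1/2}\big)$, which tends to $0$ for a.e.\ $r$. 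This gives $\Phi_{u_{i_j}}(a;r)\to r^{4-m}\nu(B_r^+(a))$ for a.e.\ $r\in(0,1]$, i.e.\ the first line of \eqref{Phi-converge}. For the second line I would split $\Psi_{u_{i_j}}(a;\rho,r)$ as in \eqref{Def-Psi}: the summand $r$ is killed by $K_{i_j}\to0$; the annular volume integral is at most $(\rho^{5-m}+\rho^{3-m})M$ for fixed $\rho$, since $m\ge5$, hence $K_{i_j}$ times it tends to $0$; and the surface contribution equals $r^{6-m}h_{i_j}(r)+\rho^{6-m}h_{i_j}(\rho)$, hence $K_{i_j}$ times it tends to $0$ for a.e.\ $\rho,r$ by the choice of the subsequence. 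Thus $K_{i_j}\Psi_{u_{i_j}}(a;\rho,r)\to0$ for a.e.\ $\rho,r\in(0,1]$.

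Finally I would deduce the monotonicity. For a.e.\ pair $0<\rho<r\le1$ the inequality \eqref{bdry-monotonicity}, applied to each $u_{i_j}$ on $B_r^+(a)$ (legitimate since $a\in\overline T_1$), gives $\Phi_{u_{i_j}}(a;\rho)\le\Phi_{u_{i_j}}(a;r)+K_{i_j}\Psi_{u_{i_j}}(a;\rho,r)$; letting $j\to\infty$ and using the two convergences just established yields $\rho^{4-m}\nu(B_\rho^+(a))\le r^{4-m}\nu(B_r^+(a))$ for a.e.\ $0<\rho<r\le1$. Since $r\mapsto\nu(B_r^+(a))$ is nondecreasing and left-continuous (continuity of measures from below) and $r^{4-m}$ is continuous, a routine approximation argument, approaching $\rho$ and $r$ from below through radii in the good set (with an intermediate good radius in between), upgrades this to monotonicity of $r\mapsto r^{4-m}\nu(B_r^+(a))$ on all of $(0,1]$. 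The main obstacle in this scheme is precisely the vanishing of $K_{i_j}\Psi_{u_{i_j}}$: the surface integrals in $\Psi$ cannot be controlled pointwise in the radius, since the slices $h_i(s)$ need not be pointwise bounded; one must instead combine $K_i\to0$ with the uniform $L^1(\rho_0,1)$-bound on $s\mapsto s^{6-m}h_i(s)$ coming from the coarea formula and pass to a subsequence along which $K_{i_j}s^{6-m}h_{i_j}(s)\to0$ for a.e.\ $s$. The same device---replacing a would-be pointwise bound by an $L^1$-bound plus passage to a subsequence---also governs the treatment of the boundary term of $\Phi$.
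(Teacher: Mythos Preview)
Your proof is correct and takes essentially the same approach as the paper: pass to a subsequence along which the radial slice integrals vanish a.e., establish \eqref{Phi-converge}, then let $j\to\infty$ in the monotonicity inequality and upgrade via left-continuity. Your diagonalisation over $\rho_0=1/n$ for the surface term $K_i s^{6-m}h_i(s)$ is a slightly more cautious variant of the paper's direct claim that this term tends to $0$ in $L^1([0,1])$ (which for $m\ge7$ tacitly uses the Morrey bound available from $(c,\nu)\in\B$), but the overall structure is identical.
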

\begin{proof}
  From Theorem \ref{thm:monotonicity} we deduce that the approximating
  biharmonic maps $u_i$ satisfy the monotonicity
  formula~\eqref{bdry-monotonicity} with constants $\chi_i$, $K_i>0$
  that satisfy $\chi_i\to0$ and $K_i\to0$ as a consequence of
  $\|Dg_i\|_{C^3(B_2^+)}\to0$ in the limit
  $i\to\infty$. Discarding the
  non-negative integral on the left-hand side of the monotonicity
  formula, we deduce
  \begin{equation}\label{limit-Phi}
    \Phi_{u_i}(a;\rho)
    \le
    \Phi_{u_i}(a;r)+K_i\Psi_{u_i}(a;\rho,r)
  \end{equation}
  for all $i\in\N$ and a.e. radii $0<\rho<r<1$.  Next, for a.e. radius $r\in(0,1)$
  and $X(x):=x-a$, we may define
  \begin{equation*}
    f_i(r):=\int_{S_r^+(a)}\big(\partial_X|Du_i|^2+4|Du_i|^2-4r^{-2}|\partial_Xu_i|^2\big)d\Hm,
  \end{equation*}
  as well as
  \begin{equation*}
    g_i(r):= K_i r^{6-m}\int_{S_r^+(a)}|D^2u_i|^2\d \Hm.
  \end{equation*}
  Since $u_i\to c$ strongly in $W^{1,2}(B_2^+,\R^K)$, 
  $\sup_i\|D^2u_i\|_{L^2(B_2^+)}^2<\infty$, and $K_i\to0$, we have $f_i\to 0$ and
  $g_i\to0$ in $L^1([0,1])$, as $i\to\infty$. Therefore, there is a subsequence
  $\{i_j\}\subset\N$ with $f_{i_j}\to 0$ a.e. and $g_{i_j}\to0$ a.e.
  in $[0,1]$, as
  $j\to\infty$.  We conclude that for almost every $r\in[0,1]$,
  \begin{align*}
    \Phi_{u_{i_j}}(a;r)
    &=e^{\chi_{i_j}r}r^{4-m}\int_{B_r^+(a)}|\Delta u_{i_j}|^2\,\d x+
    e^{\chi_{i_j}r}r^{3-m}f_{i_j}(r)\\
    &\to r^{4-m}\nu(B_r^+(a))
  \end{align*}
  in the limit $j\to\infty$, if we choose in particular $r\in[0,1]$ in
  such a way that $\nu(\partial B_r(a))=0$.
  Similarly, we deduce 
  \begin{align*}
    &K_{i_j}\Psi_{u_{i_j}}(a;\rho,r)\\
    &\quad=K_{i_j}r+K_{i_j}\int_{B_r^+(a)\setminus
      B_\rho^+(a)}\bigg(\frac{|D^2u_{i_j}|^2}{|x-a|^{m-5}}+\frac{|Du_{i_j}|^2}{|x-a|^{m-3}}\bigg)\,\d x+g_{i_j}(\rho)+g_{i_j}(r)\\\nonumber
    &\quad\to0
  \end{align*}
  as $j\to\infty$, for a.e. $0<\rho<r<1$.
  The two preceding convergences imply assertion~\eqref{Phi-converge}. 
  Using this result in~\eqref{limit-Phi}, we infer
  \begin{equation*}
    \rho^{4-m}\nu(B_\rho^+(a))
    \le
    r^{4-m}\nu(B_r^+(a))
  \end{equation*}
  for a.e. $0<\rho<r\le 1$. Since the measures $\nu(B_\rho^+(a)),$ $\nu(B_r^+(a))$ depend
  left-continuously on $\rho,r\in[0,1]$, this implies the asserted
  monontonicity property.  
\end{proof}

\subsection{Existence of flat tangent measures}
We consider tangent pairs of $\mu=(u,\nu)\in\B$ in a boundary point
$a\in \overline T_1$ in the
following sense. For scaling factors $r\in(0,1)$ we define rescaled
versions $\mu_{a,r}=(u_{a,r},\nu_{a,r})$ by 
\begin{align}
  u_{a,r}(x)&:=u(a+rx)&&\mbox{\ \ \ \ for $x\in B_{1/r}^+$,}&
     \label{rescale}\\\nonumber
  \nu_{a,r}(A)&:=r^{4-m}\nu(a+rA)&&\mbox{\ \ \ \ 
   for every Borel set $A\subset B_{1/r}^+$}.&
\end{align}
We note that the definition of $\B$ implies
\begin{equation}\label{bound-rescalings}
  \int_{B_{2}^+}\big(|D^2u_{a,r}|^2+|Du_{a,r}|^2\big)\d x
  +
  \nu_{a,r}(\overline B_{2}^+)
  \le c(m)\Lambda
\end{equation}
for any $a\in\overline T_1$ and $r\in(0,\tfrac12)$. 
For a map $u_*\in W^{2,2}(B_2^+,N)$ and a Radon measure $\nu_*$ on $\overline B_2^+$, 
we call $\mu_*=(u_*,\nu_*)$ a \emph{tangent pair} of $\mu$  in $a\in\overline
T_1$ if there exists a sequence $r_i\searrow 0$ so that 
$\mu_{a,r_i}\Bto\mu_*$.
For the family of all tangent pairs of a given $\mu\in\B$ in a
boundary point, we write
\begin{equation*}
  \T(\mu):=\big\{\mu_*\,\big|\,\mu_{a,r_i}\Bto\mu_*\mbox{ for a sequence
               $r_i\searrow 0$  and some $a\in \overline T_1$}\big\}.
\end{equation*}
A standard diagonal sequence argument yields            
\begin{equation}\label{tangent}
 \T(\mu)\subset\B,
\end{equation}
cf. \cite[Lemma 3.3]{Scheven1}.
More precisely, assume that $\mu_{a,r_i}\Bto\mu_\ast$ as $i\to\infty$,
and let $u_k\in
W^{2,2}(B_2^+,N)$ be biharmonic maps as in Definition
\ref{def:BLambda} with $(u_k,0)\Bto \mu$ in the limit
$k\to\infty$. Then a diagonal sequence argument yields a sequence
$(k_i)_{i\in\N}$ in $\N$ with $(\tilde u_i,0):=((u_{k_i})_{a,r_i},0)\Bto \mu_\ast$ as $i\to\infty$,
which proves $\mu_\ast\in\B$, and thereby the claim~\eqref{tangent}. We note that the boundary
values $\tilde g_i:=(g_{k_i})_{a,r_i}\in C^\infty(B_2^+,N)$ of $\tilde
u_i$ satisfy
\begin{equation*}
  \|D\tilde g_i\|_{C^3}
  \le
  r_i\|Dg_{k_i}\|_{C^3}
  \le
  r_i\Lambda\to0
  \qquad\mbox{in the limit }i\to\infty.
\end{equation*}
This means that every tangent pair $\mu_\ast\in\T(\mu)$ is the limit of
biharmonic maps $\tilde u_i$ in the sense $(\tilde
u_i,0)\Bto\mu_\ast$, where the boundary values of $\tilde u_i$
converge to a constant $c_\ast\in N$ in the sense 
\begin{equation}
  \label{eq:3}
  \tilde g_i\to c_\ast
  \quad\mbox{in $C^4(B_2^+,N)$, as $i\to\infty$.}
\end{equation}

In the following lemma, we construct a flat defect measure by a double
blow-up procedure.
\begin{lem}\label{blow-up}
  Assume that $N$ does not carry a non-constant
  Paneitz-biharmonic $4$-sphere, and 
  that there is a pair $\mu=(u,\nu)\in\B$ with
  $\spt\nu\cap\overline B_1^+\neq\varnothing$. Then there
  exists a pair $(c_\ast,\bar\nu)\in\T(\mu)\subset\B$, where $c_\ast\in N$ represents a constant map
  and
  \begin{equation}
    \bar\nu=c_0\HM^{m-4}\edge\big(V\cap \overline B_2^+\big)\label{flat}
  \end{equation}
  for an $(m-4)$-dimensional subspace $V\subset\partial\R^m_+$ and a  constant $c_0>0$.  
\end{lem}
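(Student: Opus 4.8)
The plan is a double blow-up of $(u,\nu)$ followed by a Federer-type dimension reduction, adapting the interior scheme of \cite{Scheven1} to the boundary, with the boundary monotonicity formula (Theorem~\ref{thm:monotonicity}, in the form of Lemma~\ref{monodefect}) taking the place of its interior counterpart. Pick biharmonic maps $u_i$ with $(u_i,0)\Bto(u,\nu)$ as in the definition of $\B$. Since $N$ carries no non-constant Paneitz-biharmonic $4$-sphere, the interior compactness theorem for stationary biharmonic maps from \cite{Scheven1} applies on every open set compactly contained in $B_2^+\setminus T_2$ and yields $u_i\to u$ strongly in $W^{2,2}_{\mathrm{loc}}$ there, so that $\nu$ vanishes off $T_2$; hence $\spt\nu\subset\overline T_2$ and $\spt\nu\cap\overline B_1^+\subset\overline T_1$. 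By hypothesis this last set is nonempty, so $\nu(\overline B_1^+)>0$, and Lemma~\ref{structure} gives $\HM^{m-4}(\Sigma_\mu\cap\overline B_1^+)>0$. Using $u\in W^{1,4}(B_1^+,N)$, \cite[Lemma~3.2.2]{Ziemer} together with the upper bound of Lemma~\ref{structure} shows that for $\nu$-a.e.\ point $a$ — necessarily in $\overline T_1$ — one has $\rho^{4-m}\int_{B_\rho^+(a)}(|\Delta u|^2+|Du|^4)\,\d x\to0$ as $\rho\downarrow0$, while $\liminf_{\rho\downarrow0}\rho^{4-m}\nu(B_\rho^+(a))\ge\eps_0$. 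Fix such an $a$; by~\eqref{bound-rescalings}, the rescalings satisfy $\mu_{a,r_k}\Bto(u_*,\nu_*)=:\mu_*\in\T(\mu)\subset\B$ along some $r_k\downarrow0$. The first limit and Hölder's inequality force $\int_{B_2^+}|Du_*|^2=0$, i.e.\ $u_*\equiv c_*$ for a constant $c_*\in N$, while the second gives $\liminf_\rho\rho^{4-m}\nu_*(B_\rho^+(0))\ge\eps_0$, so $\nu_*\neq0$. As explained before the statement of the lemma, $\mu_*$ is approximated by biharmonic maps whose boundary data converge to $c_*$ in $C^4$, so Lemma~\ref{monodefect} is available for $\mu_*$.

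By Lemma~\ref{monodefect}, $r\mapsto r^{4-m}\nu_*(B_r^+(b))$ is nondecreasing for each $b\in\overline T_1$, so its limit $\Theta(b)$ exists; since $u_*\equiv c_*$, Lemma~\ref{structure} gives $\Sigma_{\mu_*}=\spt\nu_*=\{b\in\overline T_1:\Theta(b)\ge\eps_0\}$. Blowing up $\mu_*$ at a point $b$ with $\Theta(b)\ge\eps_0$ (for instance $b=0$) along a suitable $\sigma_k\downarrow0$, the bound~\eqref{bound-rescalings} produces $(c_*,\bar\nu)\in\T(\mu)\subset\B$ with
\begin{equation*}
  \bar\nu(B_\rho^+(0))=\lim_{k\to\infty}\sigma_k^{4-m}\,\nu_*(B_{\rho\sigma_k}^+(b))=\rho^{m-4}\,\Theta(b),
\end{equation*}
so $r\mapsto r^{4-m}\bar\nu(B_r^+(0))$ is \emph{constant}. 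Choosing by a diagonal argument biharmonic maps $w_k$ with $(w_k,0)\Bto(c_*,\bar\nu)$ whose boundary data tend to $c_*$ in $C^4$, and combining~\eqref{Phi-converge} with the monotonicity formula~\eqref{bdry-monotonicity} at the origin, one finds that the radial remainder term in~\eqref{bdry-monotonicity} vanishes in the limit; this upgrades the constancy of the centered density to the \emph{conicality} of $\bar\nu$, i.e.\ $(\bar\nu)_{0,\lambda}=\bar\nu$ for every $\lambda>0$. As in the first paragraph, the interior compactness of the $w_k$ gives $\spt\bar\nu\subset\overline T_2$, and $\bar\nu\neq0$ since $\Theta_{\bar\nu}(0)=\Theta(b)\ge\eps_0$.

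It remains to perform Federer's dimension reduction as in \cite{Scheven1}. The spine $V$ of the cone $\bar\nu$ — the set of directions along which $\bar\nu$ is translation-invariant — is a linear subspace, and $V\subset\partial\R^m_+$ because $\spt\bar\nu\subset\overline T_2$. Since $\bar\nu\ge c_1\eps_0\,\HM^{m-4}\edge(\spt\bar\nu\cap\overline B_1^+)$ by Lemma~\ref{structure} and $\bar\nu$ is locally finite, $\dim V\le m-4$. If $\dim V<m-4$, then $\HM^{m-4}(\spt\bar\nu\cap\overline B_1^+)>0$ forces $\spt\bar\nu\neq V$, hence there is $p\in(\spt\bar\nu\cap\overline T_1)\setminus V$ with $\Theta_{\bar\nu}(p)\ge\eps_0$; the blow-up of $\bar\nu$ at $p$, conical again by the argument above, inherits the $V$-invariance of $\bar\nu$ and, being a tangent cone of a cone at a point off its vertex, is in addition invariant along $\R p$, so its spine has dimension at least $\dim V+1$. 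After finitely many such steps we reach a conical $(c_*,\bar\nu)\in\T(\mu)$ whose spine $V\subset\partial\R^m_+$ has dimension exactly $m-4$. Writing $\partial\R^m_+=V\oplus W$ with $\dim W=3$, the $V$-invariance exhibits $\bar\nu$ as the product of $\HM^{m-4}\edge V$ with a Radon measure $\lambda$ on $W\cong\R^3$, and conicality forces $\lambda$ to be scale-invariant, hence $\lambda=c_0\delta_0$ with $c_0>0$ as $\bar\nu\neq0$. Therefore $\bar\nu=c_0\,\HM^{m-4}\edge(V\cap\overline B_2^+)$, and $(c_*,\bar\nu)\in\T(\mu)$ because an iterated blow-up of an element of $\B$ is again a tangent pair of $\mu$, by the usual diagonal argument and the identity $(\mu_{a,r})_{b,\sigma}=\mu_{a+rb,\,r\sigma}$.

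The main obstacle is the passage, in the second paragraph, from the constancy of $r\mapsto r^{4-m}\bar\nu(B_r^+(0))$ to the genuine scale-invariance of the measure $\bar\nu$: this cannot be read off $\bar\nu$ alone but requires descending to the biharmonic approximations $w_k$, using the \emph{full} monotonicity formula~\eqref{bdry-monotonicity} with its radial remainder together with~\eqref{Phi-converge}, and coupling the rescaling parameters in a diagonal argument so that the limiting measure is literally a cone. The accompanying bookkeeping — keeping every intermediate object inside $\B$ and ensuring that each iterated tangent pair is still approximated by biharmonic maps with $C^4$-small boundary data, so that Lemma~\ref{monodefect} remains applicable at every stage — is the other place that needs care.
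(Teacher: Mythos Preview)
Your route diverges from the paper's precisely at the point you flag as ``the main obstacle'', and the paper avoids that obstacle entirely rather than overcoming it. After the first blow-up producing $(c_*,\nu_*)$, the paper uses Lemma~\ref{monodefect} and Lemma~\ref{structure} only to obtain the density bounds
\[
  0<\eps_0\le\Theta^{m-4}(\nu_*,y)\le\Lambda
  \qquad\text{for $\nu_*$-a.e.\ }y\in\overline B_1^+,
\]
and then invokes \cite[Thm.~14.18]{Mattila} as a black box: a Radon measure whose $(m{-}4)$-density is positive and finite a.e.\ possesses, at a.e.\ point, an $(m{-}4)$-flat tangent measure $c_0\HM^{m-4}\edge V$. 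Since $\spt\nu_*\subset\overline T_1$, the plane $V$ is forced into $\partial\R^m_+$. That is the entire second blow-up; no conicality, no spine, no iteration. Incidentally, \cite{Scheven1} uses the same Mattila citation in the interior case, so the hands-on dimension reduction you attribute to that paper is not actually carried out there either.

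Your proposed alternative has a genuine gap at exactly the step you identify. The radial remainder in~\eqref{bdry-monotonicity} controls $\partial_Xw_k$ and $D\partial_Xw_k$ in weighted $L^2$; passing to the limit this says only that the \emph{map} component of the limit is radially homogeneous, which is vacuous here since that component is already the constant $c_*$. It does not force the defect measure $\bar\nu=\lim\Lm\edge|\Delta w_k|^2$ to be dilation-invariant: a sequence with $\partial_Xw_k\to0$ in $W^{1,2}$ can still concentrate $|\Delta w_k|^2$ along non-conical sets (energy sitting on spheres of varying radii, for instance). Constancy of $r\mapsto r^{4-m}\bar\nu(B_r^+(0))$ is strictly weaker than $(\bar\nu)_{0,\lambda}=\bar\nu$, and no diagonal coupling of $\sigma_k$ with $w_k$ bridges this without further input. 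Without conicality the notion of spine is not well-defined and the subsequent Federer iteration cannot start. If you insist on avoiding Mattila's theorem you would essentially have to reprove its content (Marstrand-type density arguments plus the classification of uniform measures), which is far more than what you have sketched; the cleaner fix is to follow the paper and quote \cite[Thm.~14.18]{Mattila} once the density bounds are in hand.
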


\begin{proof}
  Since $u\in W^{2,2}\cap W^{1,4}(\overline B_2^+,N)$, we know that for
  $\HM^{m-4}$-a.e. $a\in \overline T_1$,
  \begin{equation}\label{nodensity}
    \lim_{\rho\searrow 0}\,\rho^{4-m}\int_{B_\rho^+(a)}\big(|D^2u|^2+|Du|^4)\,\d x=0,
  \end{equation}
  cf. \cite[Lemma 3.2.2]{Ziemer}. Since $N$ does not carry a non-constant
  Paneitz-biharmonic $4$-sphere,
  we know from the analysis of the interior case in
  \cite[Thm. 1.6]{Scheven1} that
  $\spt\nu\subset\partial\R^n_+$. On the other hand, we have 
  $\spt\nu\cap\overline B_1^+\neq\varnothing$ by assumption, so that Lemma \ref{structure} implies
  $\HM^{m-4}(\Sigma_\mu\cap\overline T_1)>0$.
  Therefore, we may
  choose a point $a\in\Sigma_\mu\cap\overline T_1 $ with the property
  \eqref{nodensity}. As a consequence of \eqref{nodensity}, there is a 
  sequence $r_i\searrow 0$ so that the rescaled maps $u_{a,r_i}$ satisfy
  \begin{equation*}
    u_{a,r_i}\to c_\ast
    \qquad\mbox{strongly in $W^{2,2}(B_2^+,\R^L)$ and in $W^{1,4}(B_2^+,\R^L)$,}
  \end{equation*}
  as $i\to\infty$, for some constant $c_\ast\in N$. 
  In view of~\eqref{bound-rescalings}, by passing to a subsequence
  we can achieve the convergence
  $\mu_{a,r_i}\Bto (c_\ast,\nu_*)$ for some Radon measure $\nu_*$ on
  $\overline B_2^+$ as $i\to\infty$.
  The result of this first blow-up is a tangent pair
  $\mu_*:=(c_\ast,\nu_*)\in\T(\mu)\subset\B$ for which the terms
  $r^{4-m}\,\nu_*(B_r^+(a))$ depend monotonically nondecreasing
  on $r\in(0,1)$ for every $a\in\overline T_1$, see Lemma
  \ref{monodefect}. We point out that
  assumption~\eqref{C4-conv-bdry-values} of this lemma is satisfied 
  in view of~\eqref{eq:3}. 
  In particular,
  this monotonicity property implies that the $(m-4)$-dimensional density
  \begin{equation*}
    \Theta^{m-4}(\nu_*,a):=\lim_{r\searrow 0}\,r^{4-m}\nu_*(B_r^+(a))
  \end{equation*}
  exists for every $a\in\overline T_1$. Because of $(c_\ast,\nu_*)\in\B$
  we have $\Theta^{m-4}(\nu_*,a)\le \Lambda$ for every
  $a\in\overline T_1$.  
  As already noted above, the interior result \cite[Thm. 1.6]{Scheven1}
  implies 
  $\Sigma_{\mu_\ast}\subset\partial\R^m_+$.
  Since $\mu_*=(c_\ast,\nu_*)$ with a
  constant map $c_\ast$, we can characterize the set
  $\Sigma_{\mu_*}\cap\overline B_1^+$ by 
  \begin{equation*}
    \Sigma_{\mu_*}\cap\overline B_1^+
    =\big\{a\in \overline T_1\,:\,\Theta^{m-4}(\nu_*,a)\ge\eps_0\big\}.
  \end{equation*}
  Because Lemma \ref{structure} implies  $\nu_*(\overline
  B_1^+\setminus\Sigma_{\mu_*})=0$, we deduce 
  \begin{equation*}
    0<\eps_0\le \Theta^{m-4}(\nu_*,y)\le \Lambda\qquad\mbox{for
      $\nu_*$-a.e. $y\in\overline B_1^+$.}
  \end{equation*}
  The above property implies the existence of an
  $(m-4)$-flat tangent measure of $\nu_*$, see
  \cite[Thm. 14.18]{Mattila}. More precisely, for
  $\nu_*$-a.e. $y\in\overline B_1^+$, there is a sequence
  $\rho_j\searrow 0$ with $(\nu_*)_{y,\rho_j}\wsto\bar\nu$
  weakly$^\ast$ in the space of Radon measures,
  where $\bar\nu=c_0\HM^{m-4}\edge V$ for an
  $(m-4)$-dimensional subspace $V\subset\R^m$ and a constant $c_0>0$.
  As above, we infer from the interior case in
  \cite[Thm. 1.6]{Scheven1} that $\nu_*(\overline B_1^+\setminus
  \overline T_1)=0$, so that we can find a boundary point
  $y\in\overline T_1$ with the above property and with $V\subset\partial\R^n_+$. 
  Finally, we note that a diagonal sequence argument
  implies that tangent pairs
  of tangent pairs are again tangent pairs, which implies that 
  $(c_\ast,\bar\nu)\in\T(\mu)$. This completes the proof.
\end{proof}

\subsection{Construction of a biharmonic $4$-sphere or $4$-halfsphere}

\begin{thm}\label{sphere}
  Assume that there is a pair $\mu=(u,\nu)\in\B$ with 
  $\spt\nu\cap\overline B_1^+\neq\varnothing$. Then
  there exists a non-constant Paneitz-biharmonic sphere $v\in
  C^\infty(S^4,N)$ or a non-constant Paneitz-biharmonic half-sphere
  $C^\infty(S_+^4,N)$ with constant boundary values.
\end{thm}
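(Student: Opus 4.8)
The plan is to combine the two blow-up steps already prepared in the paper with a Lin-type second blow-up around carefully chosen energy concentration points. First, I would apply Lemma~\ref{blow-up} to pass from the original pair $\mu$ to a tangent pair $(c_\ast,\bar\nu)\in\T(\mu)$ with flat defect measure $\bar\nu=c_0\HM^{m-4}\edge(V\cap\overline B_2^+)$ for some $(m-4)$-dimensional subspace $V\subset\partial\R^m_+$. By the diagonal argument recorded after \eqref{tangent}, this pair is itself approximated by biharmonic maps, $(\tilde u_i,0)\Bto(c_\ast,\bar\nu)$, whose boundary values converge to the constant $c_\ast$ in $C^4$, and whose Morrey norms stay bounded by $\Lambda$. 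By Lemma~\ref{defect}\,(i), the $\tilde u_i$ converge to $c_\ast$ in $C^3_{\textup loc}$ away from $\Sigma_{(c_\ast,\bar\nu)}=V\cap\overline B_2^+$. So the situation is now reduced to a sequence of biharmonic maps that converges strongly to a constant \emph{except} on a single $(m-4)$-plane lying in the boundary, and with a nontrivial, translation-invariant defect along that plane.

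The heart of the proof is the second blow-up. I would fix a generic point $a_0\in V\cap B_1$ and, for each $i$, choose blow-up points $x_i$ and scales $s_i\searrow 0$ that detect the worst concentration of the rescaled gradient near $a_0$ at scale comparable to the defect; concretely, one uses a point-selection argument (in the spirit of the proof of Lemma~\ref{lem:uniform}, and of Lin \cite{Lin}, \cite[\S4]{Scheven1}) to produce $x_i\to a_0$, $s_i\to0$ and a definite amount of bi-energy $\delta_0>0$ sitting in $B_1^+(0)$ of the rescaled maps $w_i(x):=\tilde u_i(x_i+s_i x)$, while keeping $\|Dw_i\|_{L^2}$ on fixed balls uniformly small (this is where the strong convergence to a constant away from $V$, i.e.\ Lemma~\ref{blow-up} together with Lemma~\ref{defect}, is indispensable, since it forces the defect to be \emph{purely} in $|\Delta u|^2$ with no residual $W^{1,2}$-energy). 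Two cases arise according to whether $\mathrm{dist}(x_i,\partial\R^m_+)/s_i$ stays bounded or tends to infinity. In the first case the domains of the $w_i$ converge to a halfspace $\R^m_+$ (after a harmless shift), and in the second to the full space $\R^m$.

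In either case, the $w_i$ are biharmonic with boundary values converging to a constant in $C^4$, satisfy the Morrey bound \eqref{bound:morrey}, and by the $\eps$-regularity results (Theorem~\ref{epsreg}, Corollary~\ref{cor:uniform}) converge in $C^3_{\textup loc}$, away from a lower-dimensional concentration set, to a limit $w\in C^4$ which is biharmonic, nonconstant (because of the retained bi-energy $\delta_0$), of finite bi-energy $\int|\Delta w|^2<\infty$, and, in the halfspace case, has constant Cauchy data on $\partial\R^m_+$. The translation invariance of the flat defect $\bar\nu$ along $V$ lets one further reduce the effective dimension from $m$ to $4$: $w$ depends only on $4$ of the variables (the ones transverse to $V$), so it is essentially a biharmonic map $\R^4\to N$ (resp.\ $\R^4_+\to N$ with constant boundary data) of finite bi-energy. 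A removable-singularity argument plus the conformal invariance of the Paneitz bi-energy then turns $w$, via (inverse) stereographic projection, into a non-constant Paneitz-biharmonic map $v\in C^\infty(S^4,N)$ (resp.\ $C^\infty(S^4_+,N)$ with constant boundary values), which is the assertion.

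\textbf{Main obstacle.} The delicate point is the point-selection / blow-up step: one must choose $x_i$ and $s_i$ so that (a) a fixed positive amount of \emph{bi-energy} concentrates at the new unit scale, (b) the rescaled maps still satisfy the Morrey bound so that $\eps$-regularity applies off a small set and the limit is smooth and nonconstant, and (c) the $W^{1,2}$-energy of the rescaled maps on fixed balls goes to zero, so that the bi-energy of the limit is genuinely concentrated and finite rather than spread out; balancing (a) and (c) simultaneously is exactly where the flatness and boundary-localisation of $\bar\nu$ from Lemma~\ref{blow-up}, and the strong $W^{1,2}$-convergence from Lemma~\ref{defect}, must be used in an essential way. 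A secondary technical nuisance is controlling the boundary terms in the monotonicity formula \eqref{bdry-monotonicity} under the rescaling in the case $\mathrm{dist}(x_i,\partial\R^m_+)/s_i$ bounded, to guarantee that the limit halfsphere indeed has constant boundary values; this uses that $\|D\tilde g_i\|_{C^3}\to0$, hence the constants $\chi_i,K_i$ in Theorem~\ref{thm:monotonicity} vanish.
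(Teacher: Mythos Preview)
Your plan follows the same overall architecture as the paper's proof: reduce to a flat defect measure via Lemma~\ref{blow-up} (after first assuming, for contradiction, that no nontrivial Paneitz-biharmonic $4$-sphere exists, which is the hypothesis of that lemma), then perform a Lin-type second blow-up around carefully selected points, distinguish the interior and boundary cases by the behaviour of $\mathrm{dist}(x_i,\partial\R^m_+)/s_i$, and finish with stereographic projection. The broad strokes are right.

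Two of your key steps are, however, too imprecise as stated and would not go through in the form you describe. First, the dimension reduction from $m$ to $4$ is not a direct consequence of ``translation invariance of $\bar\nu$ along $V$''. What the paper actually does (Step~1) is exploit the \emph{full} monotonicity formula~\eqref{bdry-monotonicity}: for centers $a\in V$, the flatness of $\bar\nu$ forces $\Phi_{u_i}(a;r)-\Phi_{u_i}(a;\rho)\to 0$, and together with $K_i\to 0$ this kills the integral $\int |D\partial_X u_i|^2/|x-a|^{m-2}$. Combining two different centers $a=0$ and $a=\tfrac14 e_k$ then gives $\int_{B_{1/2}^+}|D\partial_k u_i|^2\to 0$ for each $k\le m-4$, which is what forces the blow-up limit to be independent of those variables. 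This is not the point-selection mechanism of Lemma~\ref{lem:uniform}. Second, your assertion that the rescaled maps converge in $C^3_{\mathrm{loc}}$ only ``away from a lower-dimensional concentration set'' would be fatal: if any bi-energy were allowed to escape into such a set, you could not conclude the limit is nonconstant. The paper (Step~6) proves $C^3$-convergence \emph{everywhere} by testing the stationarity identity~\eqref{stationary-bi} with $\xi=\psi_a e_k$ to show that $a'\mapsto\mathcal F_i(a)$ is almost constant in the $V$-directions; combined with the choice of scale (so that $\mathcal F_i((0,a''))\le 2^{1-m}\eps_0$), this puts every cylinder in the small-energy regime and Corollary~\ref{cor:uniform} gives uniform $C^4$ bounds. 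Your remark that ``$\|Dw_i\|_{L^2}$ on fixed balls is uniformly small'' is also off: the gradient energy of the rescaled maps is merely bounded (by $2^{1-m}\eps_0$ on the unit cylinder), not vanishing, since the limit is nonconstant.
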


\begin{proof}
  For the proof we will adapt techniques from \cite{Lin}, see also
  \cite{Scheven1} for the higher order case. Assume for
  contradiction that $N$ does not contain any non-constant Paneitz-biharmonic
  $4$-spheres.
  Then, after a suitable rotation, Lemma \ref{blow-up} yields the existence of a pair
  $\bar\mu=(c_\ast,\bar\nu)\in\T(\mu)\subset\B$, where $c_\ast\in N$ is a
  constant map and $\bar\nu$ is the measure on $\overline B_2^+$ 
  given by 
  \begin{equation*}
    \bar\nu
    =c_0\HM^{m-4}\edge\big(\overline B_2^{m-4}\x\{0\}\big)
  \end{equation*}
  for a positive constant $c_0$. Clearly, we have
  $\Sigma_{\bar\mu}=\overline B_2^{m-4}\times\{0\}$. By definition
  of $\B$ there are maps 
  $u_i\in W^{2,2}(B_2^+,N)$ and boundary values $g_i\in
  C^{\infty}(B_2^+,N)$ with $(u_i,Du_i)=(g_i,Dg_i)$ on
  $T_2$ in the sense of traces, so that 
  \begin{equation}
  \|D^2u_i\|_{L^{2,m-4}(B_2^+)}^2+\|Du_i\|_{L^{2,m-2}(B_2^+)}^2+\|g_i\|_{C^{4,\alpha}(B_2^+)}^2
  \le \Lambda\label{morrey-sequence}
\end{equation}
for any $i\in\N$, and the
  following convergence holds as $i\to\infty$.
  \begin{align}
    u_i\to&\ c_\ast&&\hspace{-8em}\mbox{in }W^{1,2}(B_2^+,N),&\label{W12}\\
    u_i\to&\ c_\ast&&\hspace{-8em}\mbox{in }
    C^3_{\mathrm{loc}}(B_2^+\setminus(B_2^{m-4}\times\{0\}),N),&\label{C2}\\
    g_i\to &\ c_\ast&&\hspace{-8em}\mbox{in }C^4(B_2^+,N),&\label{gC4}\\
    \Lm\edge|\Delta u|^2\wsto&\ c_0\HM^{m-4}\edge\big(\overline
    B_2^{m-4}\x\{0\}\big)&&
    \mbox{as measures on $\overline B_2^+$}.&\label{Mass}
  \end{align}
  In particular, the convergence \eqref{C2} follows from Lemma
  \ref{defect}\,(i), and the convergence \eqref{gC4} follows from~\eqref{eq:3}.
  
  In the sequel we will use the notation
  $\C_r(x)=\C_r(x',x''):=B_r^{m-4}(x')\x B^4_r(x'')$ for cylinders
  with barycenter $x=(x',x'')\in\R^{m-4}\x\R^4$, and $\C_r^+(x):=C_r(x)\cap\R^m_+$. The barycenter will
  be omitted in the notation if it is zero.
  For the proof of the theorem we will either construct a smooth biharmonic
  map $\R^4\to N$ or a biharmonic map $\R^4_+\to N$
  with constant boundary values 
  as the limit of a blow-up sequence
  \begin{equation}\label{rescaled}
    v_i(y):=u_i(p_i+\delta_i y)
    \mbox{ with $p_i=(p_i',p_i'')\in B^{m-4}_{1/4}\x B^4_{1/8}\cap\R^m_+$ and
      $\delta_i\searrow 0$,}
  \end{equation}
  where the parameters $p_i$ and $\delta_i$ will be carefully chosen below.  The
  above map is defined for $y\in\C_{R_i}$ with $y_m\ge
  -\delta_i^{-1}p_{i,m}$, where $R_i:=1/8\delta_i\to\infty$, and we
  write $p_{i,m}$ for the $m$-th component of the vector $p_i\in\R^m$.
  The construction is carried out in several steps.\\
 
  {\em Step 1. }We claim that after extracting a subsequence, there
  holds
  \begin{equation}\label{claim1}
    \sum_{k=1}^{m-4}\int_{B_{1/2}^+}|D\partial_k u_i|^2\,\d x\to 0,
    \qquad\mbox{as }i\to\infty.
  \end{equation}
  From the monotonicity formula stated in Theorem
  \ref{thm:monotonicity} we infer 
  \begin{align}\label{bdry-mono}
    &
    4\int_{B_r^+(a)\setminus B_\rho^+(a)}e^{\chi_i|x-a|}\frac{|D\partial_Xu_i|^2}{|x-a|^{m-2}}\,\d x\\\nonumber
    &\quad\le
    \Phi_{u_i}(a;r)-\Phi_{u_i}(a;\rho)
    +
    K_i\Psi_{u_i}(a;\rho,r)
  \end{align}
  for a.e. $0<\rho<r<1$ and every $a\in B_1^{m-4}\times\{0\}$, where we used the
  abbreviations $\Phi_{u_i}$ and $\Psi_{u_i}$ introduced in
  \eqref{Def-Phi} and \eqref{Def-Psi} and $X(x):=x-a$. Moreover, since
  $g_i\to c_\ast$ in $C^4$, Theorem \ref{thm:monotonicity} yields $K_i\to0$
  and $\chi_i\to0$ in the limit $i\to\infty$. 
  Lemma~\ref{monodefect} implies that after passing to
  a subsequence, 
  we have the convergence 
  \begin{align*}
    &\lim_{i\to\infty}\Big(\Phi_{u_{i}}(a;r)-\Phi_{u_{i}}(a;\rho)+K_i\Psi_{u_i}(a;\rho,r)\Big)\\
    &\qquad=
    r^{4-m}\bar\nu(B_r^+(a))-\rho^{4-m}\bar\nu(B_\rho^+(a))
    =0
  \end{align*}
  for a.e. $0<\rho<r<1$, where the last identity follows from the
  particular form of $\bar\nu$ and the fact $a\in B_1^{m-4}\times\{0\}$.
  Using the last formula to pass to the limit in \eqref{bdry-mono}, we deduce
  \begin{align*}
    &\limsup_{i\to\infty}r^{2-m}\int_{B_r^+(a)\setminus
      B_\rho^+(a)}|D\partial_Xu_i|^2\,\d x\\
    &\qquad\le
    \limsup_{i\to\infty}\int_{B_r^+(a)\setminus
      B_\rho^+(a)}e^{\chi_i|x-a|}\frac{|D\partial_Xu_i|^2}{|x-a|^{m-2}}\,\d x
    =0.
  \end{align*}
  Moreover, the Morrey bound \eqref{morrey-sequence} implies 
  \begin{align*}
  \limsup_{i\to\infty}r^{2-m}\int_{B_\rho^+(a)}|D\partial_Xu_i|^2\,\d x
  \le
  c\rho^{m-2}r^{2-m}\Lambda,
  \end{align*}
  which can be made arbitrarily small by choosing $\rho>0$ small
  enough. Combining the last two formulae, we arrive at 
  \begin{align*}
    &\lim_{i\to\infty}\int_{B_r^+(a)}|D\partial_Xu_i|^2\,\d x=0
  \end{align*}
  for a.e. $r\in(0,1]$, where $X(x)=x-a$. We apply this identity once
  with $a_0:=0$ and once with $a_k:=\tfrac14 e_k\in B_1^{m-4}\times\{0\}$ for
  $k\in\{1,\ldots,m-4\}$. In this way, we deduce
  \begin{align*}
    &\tfrac1{16}\int_{B_{1/2}^+}|D\partial_ku_i|^2\,\d x\\
    &\qquad\le
    2\int_{B_{1/2}^+}|D\langle Du_i,x\rangle|^2\,\d x
    +
    2\int_{B_{1/2}^+}|D\langle Du_i,x-\tfrac14 e_k\rangle|^2\,\d x
    \to0
  \end{align*}
  in the limit $i\to\infty$. This yields the claim~\eqref{claim1}.\\

  {\em Step 2 \textup{(}Choice of $p_i'$\textup{)}.} We claim that the parameters
  $p_i'\in B_{1/4}^{m-4}$ can be chosen with the properties
  \begin{align}
    \sup_{0<r\le1/8}\sum_{k=1}^{m-4}\,
    r^{4-m}\int_{B_r^{m-4}(p_i')\x B_{1/4}^4\cap\R^m_+}|D\partial_k
    u_i|^2\,\d x\to 0
    \qquad\mbox{as $i\to\infty$}\label{step2}
  \end{align}
  and
  \begin{align}
    \mbox{\ \ the maps $u_i$ are of class $C^\infty$ 
    in a neighborhood of $\{p_i'\}\x\overline{B_{1/4}^4}\cap\R^m_+$.}
    \label{step2-smooth}
  \end{align}
   For the proof of this claim, we consider the functions $f_i\in
   L^1(\R^{m-4})$ defined by 
  \begin{equation*} 
    f_i(x'):=\sum_{k=1}^{m-4}\int_{B_{1/4}^4\cap\R^4_+}|D\partial_k
    u_i(x',x'')|^2\,\d x''
    \qquad\mbox{for }x'\in B_{3/8}^{m-4}
  \end{equation*}
  and $f_i(x')=0$ otherwise. 
  Since $B_{3/8}^{m-4}\times B_{1/4}^4\subset
  B_{1/2}$, the convergence~\eqref{claim1} implies $f_i\to 0$ in
  $L^1(\R^{m-4})$, as $i\to\infty$. 
  The weak $L^1$-estimate for the Hardy-Littlewood maximal function
  implies 
  \begin{equation*}
    \mathcal{L}^{m-4}(\{x'\in\R^{m-4}:\mathcal{M}f_i(x')>\eps_i\})\le\frac{c(m)}{\eps_i}\|f_i\|_{L^1}
  \end{equation*}
   for every $\eps_i>0$, where we used the abbreviation
    \begin{equation*}
    \mathcal{M}f_i(x'):= \sup_{r>0}\
    r^{4-m}\int_{B_r^{m-4}(x')}f_i(y')\,\d y'\qquad\mbox{for }x'\in\R^{m-4}.
\end{equation*}
With the choices 
$\eps_i:=2c(m)\|f_i\|_{L^1}/\mathcal{L}^{m-4}(B_{1/4}^{m-4})\to0$, as
$i\to\infty$, the 
  above inequality implies
  \begin{equation}\label{max_func}
    \mathcal{L}^{m-4}(\{x'\in B_{1/4}^{m-4}:\mathcal{M}f_i(x')>\eps_i\})\le\frac12\mathcal{L}^{m-4}(B_{1/4}^{m-4}).
  \end{equation}
  Furthermore, the partial regularity result from Corollary
  \ref{cor:partial} implies 
  \begin{align}\nonumber
    &\mathcal{L}^{m-4}\left(\left\{x'\in B_{1/4}^{m-4}: 
            \mbox{there is a $x''\in\overline{B_{1/4}^4}\cap\R^4_+$
              with
            $(x',x'')\in\sing(u_i)$}\right\}\right) \\
    &\quad\le\mathcal{H}^{m-4}(\sing(u_i))=0.\label{reg}
  \end{align}
  Because of (\ref{max_func}) and (\ref{reg}), there are points $p_i'\in
  B_{1/4}^{m-4}$ satisfying \eqref{step2-smooth} and
  $\mathcal{M}f_i(p_i')\le\eps_i\to 0$ as $i\to\infty$. The latter property
  implies \eqref{step2}, so that the claim of Step 2 is verified.\\

  {\em Step 3 \textup{(}Choice of the scaling factors $\delta_i$\textup{)}.} The scaling
  factors have to be chosen carefully to make sure that the
  bi-energies of the rescaled maps neither tend to zero nor become
  unbounded, since we want to obtain a non-constant biharmonic map of
  finite bi-energy in
  the limit. In order to preserve a certain energy level during the blow-up, we consider the
  quantity
  \begin{equation*}
    \F_i(\delta):=
    \max_{z\in \overline{B^4_{1/4}}\cap\R^m_+}\,\delta^{4-m}\int_{\C_{\delta}^+(p_i',z)}\,
    \big(|\Delta u_i|^2+\delta^{-2}|Du_i|^2\big)\,\d x
  \end{equation*}
  for $\delta\in(0,1]$. 
  We may assume that $p_i'\to:p_0'\in\overline B_{1/4}^{m-4}$ as
  $i\to\infty$.  We will inductively construct a subsequence
  $i_j\in\N$, $j\in\N_0$, and a decreasing sequence
  $\delta_{i_j}\searrow 0$ with
  \begin{equation}
    \label{energylevel}
    \F_{i_j}(\delta_{i_j})=2^{1-m}\eps_0
    \qquad\mbox{for any $j\in\N$}.
  \end{equation}
  Note that we did not claim anything for the case $j=0$,
  so we can simply choose $i_0=1$
  and $\delta_0=\frac12$. 
  Now suppose that $i_{j-1}\in\N$ and $\delta_{i_{j-1}}>0$ have already been
  chosen for some $j\in\N$. For any
  $\delta_*\in(0,\frac12\delta_{i_{j-1}})$, we
  infer from \eqref{W12} and \eqref{Mass} that
  \begin{equation*}
    \lim_{i\to\infty}\,\delta_*^{4-m}\int_{\C_{\delta_*}^+(p_i',0)}
    \big(|\Delta u_i|^2+\delta_*^{-2}|Du_i|^2\big)\,\d x
    =\delta_*^{4-m}\bar\nu(\C_{\delta_*}^+(p_0',0))\ge\eps_0,
  \end{equation*}
  where the last estimate holds because of
  $(p_0',0)\in\Sigma_{\bar\mu}$ and
  $\C_{\delta_*}(p_0',0)\supset B_{\delta_*}(p_0',0)$.
  Choosing
  $i_j\in\N$ large enough, depending on $\delta_*$, we can thus
  achieve
  $$
  \F_{i_j}(\delta_*)\ge 2^{1-m}\eps_0.
  $$
  This fixes the index $i_j\in\N$. 
  On the other hand, we know from the choice of $p_i'$ in Step 2 that
  the map $u_{i_j}$ is smooth on
  $A_{\delta}:=\overline{B_{\delta}^{m-4}(p_{i_j}')\x
    B^4_{1/4+\delta}}\cap\R^m_+$ if $\delta>0$ is chosen small
  enough. Consequently,
  \begin{equation*}
    \F_{i_j}(\delta)\le \max_{A_\delta}\big(|\Delta u_{i_j}|^2+|Du_{i_j}|^2\big)\delta^{2-m}
    \Lm(\C_{\delta}^+)\le 2^{-m}\eps_0
  \end{equation*}
  if $\delta\in(0,\delta_*]$ is chosen small enough in dependence on
  $i_j$. Combining the last two estimates and applying the
  intermediate value theorem, we deduce the existence of a number
  $\delta_{i_j}\in(0,\frac12\delta_{i_{j-1}})$ with the property
  \eqref{energylevel}. This construction yields the desired sequence 
  $\delta_{i_j}\searrow0$ for $j\to\infty$ with \eqref{energylevel},
  which concludes Step 3.
  In what follows, we will denote the
  subsequence $\{i_j\}$ again by $\{i\}$ for simplicity.\\
  
  {\em Step 4 \textup{(}Choice of $p_i''$\textup{)}.} 
  We choose points
  $p_i''\in\overline{B_{1/4}^4}\cap\R^4_+$ for which $p_i:=(p_i',p_i'')$
  satisfies
  \begin{equation}\label{max0}
    \delta_i^{4-m}\int_{\C_{\delta_i}^+(p_i)}
    \big(|\Delta u_i|^2+\delta_i^{-2}|Du_i|^2\big)\,\d x
    =\F_i(\delta_i)=2^{1-m}\eps_0
  \end{equation}
  for all $i\in\N$, which is possible by~\eqref{energylevel} and the
  definition of $\F_i$.
  We claim that for all but finitely many values of
  $i\in\N$, we have $p_i''\in B_{1/8}^4$. Indeed, if this was not
  the case, after passing to a subsequence we would have 
  \begin{equation*}
    p_{i}\in\overline{B_{1/4}^{m-4}\x(B_{1/4}^4\setminus
      B_{1/8}^4)}\qquad\mbox{for any }i\in\N.
  \end{equation*}
  We consider a radius $R_0\le\frac1{20}$ 
  to be fixed later independently of $i\in\N$.
  We can assume 
  $\sqrt2\delta_i\le\frac{R_0}4$ by choosing $i\in\N$ large enough.
  We write $p_i^0$ for the orthogonal projection of $p_i$ onto
  $\partial\R^m_+$ and distinguish 
  between the cases $|p_i-p_i^0|<\frac{R_0}4$ and
  $|p_i-p_i^0|\ge\frac{R_0}4$. 
  In the first case, we observe that
  $\C^+_{\delta_{i}}(p_i)\subset B^+_{\sqrt 2\delta_i}(p_i)
  \subset B^+_{R_0/2}(p_i^0)$.
  Applying Lemma
  \ref{lemma:morrey} with the center $a=p_i^0\in T_1$, we thus infer
  \begin{align}\label{boundary-case}
    &\delta_{i}^{4-m}\int_{\C^+_{\delta_{i}}(p_i)}
    \big(|\Delta u_{i}|^2+\delta_{i}^{-2}|Du_{i}|^2\big)\,\d x\\\nonumber
    &\qquad\le
    cR_0^{4-m}\int_{B^+_{R_0}(p_i^0)}
    \big(|\Delta u_{i}|^2+R_0^{-2}|Du_{i}|^2\big)\,\d x
    +cR_0\\\nonumber
    &\qquad\le
    cR_0^{4-m}\int_{B^+_{5R_0/4}(p_i)}
    \big(|\Delta u_{i}|^2+R_0^{-2}|Du_{i}|^2\big)\,\d x+cR_0\\
    &\qquad\le
    cR_0^{2-m}\int_{B_{1/2}^{m-4}\x(B^4_{1/2}\setminus B^4_{1/16})\cap\R^m_+}
          \big(|\Delta u_i|^2+|Du_i|^2\big)\,\d x+cR_0,\nonumber
  \end{align}
  where we used the property $R_0\le\frac1{20}$ in the last step. In
  the remaining case $|p_i-p_i^0|\ge\frac{R_0}{4}$, we again assume
  $\sqrt2\delta_i\le\frac{R_0}4$, which implies
  $\C_{\delta_{i}}^+(p_i)\subset B_{\sqrt 2\delta_i}(p_i)
  \subset B_{R_0/4}(p_i)$. 
  Therefore, Lemma
  \ref{lemma:morrey} implies 
  \begin{align}
    \label{interior-case}
    &\delta_{i}^{4-m}\int_{\C^+_{\delta_{i}}(p_i)}
    \big(|\Delta u_{i}|^2+\delta_{i}^{-2}|Du_{i}|^2\big)\,\d x\\\nonumber
    &\qquad\le
    cR_0^{2-m}\int_{B_{R_0/2}(p_i)}
    \big(|\Delta u_{i}|^2+R_0^{-2}|Du_{i}|^2\big)\,\d x+cR_0\\\nonumber
    &\qquad\le
    cR_0^{2-m}\int_{B_{1/2}^{m-4}\x(B^4_{1/2}\setminus B^4_{1/16})\cap\R^m_+}
          \big(|\Delta u_i|^2+|Du_i|^2\big)\,\d x+cR_0.
  \end{align}
  Now, we recall the choice of
  $p_i''$, use either \eqref{boundary-case} or \eqref{interior-case}
  and then the convergences \eqref{W12} and \eqref{Mass}. In this way, we deduce
  \begin{eqnarray*}
    2^{1-m}\eps_0
    &=&\lim_{i\to\infty}\delta_{i}^{4-m}\int_{\C^+_{\delta_{i}}(p_i)}
        \big(|\Delta u_{i}|^2+\delta_{i}^{-2}|Du_{i}|^2\big)\,\d x\\
    &\le&cR_0^{2-m}\lim_{i\to\infty}\int_{B_{1/2}^{m-4}\x(B^4_{1/2}\setminus B^4_{1/16})\cap\R^m_+}
          \big(|\Delta u_i|^2+|Du_i|^2\big)\,\d x+cR_0\\
    &=& cR_0^{2-m}\,\bar\nu\big(B_{1/2}^{m-4}\x(B^4_{1/2}\setminus B^4_{1/16})\cap\R^m_+\big)+cR_0=cR_0.
  \end{eqnarray*}
  By choosing $R_0$ so small that $cR_0<2^{1-m}\eps_0$, we arrive at the desired
  contradiction. This yields the claim $p_i''\in B_{1/8}^4$.\\

  {\em Step 5 \textup{(}Blow-up\textup{)}.}
  As before, we denote the $m$-th component of the vector $p_i\in\R^m$
  by $p_{i,m}$. 
  We distinguish between the case $\delta_i^{-1}p_{i,m}\to\infty$
  and the case
  $\delta_i^{-1}p_{i,m}\to b$ for some $b\in[0,\infty)$
  in the limit $i\to\infty$. By passing to a
  subsequence, we can ensure that one of these two alternatives
  is satisfied.
  In the first case, we define rescaled maps $v_i$
  according to
  \begin{equation*}
    v_i(y):= u_i(p_i+\delta_iy),
    \qquad\mbox{for $y\in \C_{R_i}$ with $y_m\ge -\delta_i^{-1}p_{i,m}$,}
  \end{equation*}
  where $R_i:=1/8\delta_i\to\infty$. In the case
  $\delta_i^{-1}p_{i,m}\to\infty$, the domain of definition of
  $v_i$ contains arbitrarily large balls centered in the
  origin. Therefore, this case can be treated 
  analogously as in the interior situation, cf. the arguments
  following (3.24) in \cite{Scheven1}. In this way, it is possible to
  show that the limit map of the rescaled maps $v_i$ is of the form
  $\hat v(x',x'')=v(x'')$ for a non-constant biharmonic map
  $v\in C^\infty(\R^4,N)$ with $|D^2u|\in L^2(\R^4)$.
  Under stereographic projection, this map
  corresponds to a non-constant Paneitz-biharmonic map $v\in
  C^\infty(S^4,N)$, which is a contradiction to our assumptions. 

  We omit the details, which have been carried out in \cite{Scheven1},
  and present only the corresponding arguments in the boundary
  situation, i.e. in the case $\delta_i^{-1}p_{i,m}\to b<\infty$, as
  $i\to\infty$.
  In this situation, we
  rescale around the orthogonal projections $p_i^0\in\partial\R^m_+$
  of $p_i$, i.e. we define maps $v_i\in W^{2,2}(\C_{R_i}^+,N)$ by
  letting 
  \begin{equation*}
    v_i(y):= u_i(p_i^0+\delta_iy),
    \quad\mbox{for $y\in \C_{R_i}^+$ and $i\in\N$,}
  \end{equation*}
  where $R_i:=1/8\delta_i\to\infty$. For the corresponding rescaling
  of the boundary
  values, we write
  \begin{equation*}
    h_i(y):= g_i(p_i^0+\delta_iy),
    \quad\mbox{for $y\in \C_{R_i}^+$ and $i\in\N$.}
  \end{equation*}
We can assume in particular that
  $\delta_i^{-1}p_{i,m}<R_i$ for all $i\in\N$.  As a consequence of
  our construction, the maps $v_i$ have the following properties. By
  \eqref{step2} and the fact $(p_i^0)''\in B_{1/8}^4$, there holds
  \begin{equation}
    \label{con}
    \sum_{k=1}^{m-4}R^{4-m}\int_{B_R^{m-4}\x
      B_{R_i}^4\cap\R^m_+}|D\partial_kv_i|^2\,\d x\to 0
  \end{equation}
  as $i\to\infty$, for every $R>0$. Moreover, \eqref{max0} and the
  definition of $\F_i$ implies
  \begin{align}
    \label{max}
    &\int_{\C_1^+(0,\delta_i^{-1}p_{i,m})}\big(|\Delta v_i|^2+|Dv_i|^2\big)\,\d y\\
    &\qquad=\max_{z\in B^4_{R_i}\cap\R^4_+}\int_{\C_1^+(0,z)}\big(|\Delta v_i|^2+|Dv_i|^2\big)\,\d y
    =2^{1-m}\eps_0\nonumber
  \end{align}
  for all $i\in\N$. Finally, from \eqref{morrey-sequence} we infer
  \begin{equation}
    \label{mor}
    \sup_{B_\rho^+(a)\subset \C_{R_i}^+}\rho^{4-m}\int_{B_\rho^+(a)}
    \big(|D^2v_i|^2+\rho^{-2}|Dv_i|^2\big)\,\d y\le \Lambda
  \end{equation}
  for all $i\in\N$. From \eqref{mor} we deduce by
  Rellich's theorem, combined with a diagonal sequence argument, that
  there is a limit map $\hat v\in W^{1,2}_{\mathrm{loc}}(\R^m_+,N)$ with
  $v_i\to\hat v$ strongly in $W^{1,2}(B_R^+,N)$ for every $R>0$ and
  almost everywhere, after passing to a subsequence.  For a fixed
  $R>0$, we may additionally find a subsequence $\{i_j\}\subset\N$
  with $v_{i_j}\wto \hat v$ weakly in $W^{2,2}(B_R^+,\R^L)$. Therefore,
  \eqref{con} implies for every $1\le k\le m-4$
  \begin{equation}
    \label{conhat}
    \int_{B_R}|D\partial_k\hat
    v|^2\,\d y\le\lim_{j\to\infty}\int_{B_R}|D\partial_kv_{i_j}|^2\,\d y=0
    \qquad\mbox{for any }R>0.
  \end{equation}
  As a consequence, $\partial_k\hat v$ is constant, and since $N$ is
  compact, even $\partial_k\hat v\equiv 0$ for every $1\le k\le m-4$.
  This implies that there is a map
  $v\in W^{2,2}_{\textrm{loc}}(\R^4_+,N)$ with $\hat v(y',y'')=v(y'')$ for a.e.
  $(y',y'')\in\R^{m-4}\times\R^4_+$.
  Furthermore, we recall for further reference that the strong
  convergence $v_i\to\hat v$ in $W^{1,2}(B_R^+,N)$ implies
  \begin{equation}
    \label{con1}
    \lim_{i\to\infty}\int_{B_R^+}|\partial_k v_i|^2\,\d y=0
    \qquad\mbox{for all $R>0$ and $1\le k\le m-4$.}
  \end{equation}
  \ \\

  {\em Step 6 ($C^3$-convergence). }  In order to establish local
  $C^3$-convergence  $v_i\to \hat v$,
  we will show that the bi-energy of the maps
  $v_i$ is small on every cylinder $\C_{1/2}(a)$ for $a\in B_R^+$, if
  $i>i_0(R)$ is sufficiently large. More precisely, we will show that
  the quantities
  \begin{equation*}
    \CF(a):=\int_{\C_1^+}\big(|\Delta v_i|^2+|Dv_i|^2\big)(y+a)\psi(y)\,\d y
  \end{equation*}
  are small, where $a\in\C^+_{R_i-1}$ and
  $\psi\in C^\infty_{\mathrm{cpt}}(\C_1,[0,1])$ is a cut-off function with
  $\psi\equiv 1$ on $\C_{1/2}$ and $|D^2\psi|+|D\psi|\le c$ for some
  constant $c=c(m)$. For $1\le k\le m-4$, we will use the 
  test vector field $\xi:=\psi_ae_k$, where $\psi_a(y):=\psi(y-a)$, in
  the differential equation \eqref{stationary-bi} for the maps
  $v_i$. Note that the maps $v_i$ also satisfy \eqref{stationary-bi} by
  scaling invariance, and that the test
  vector field is admissible since $e_k$ is tangential to
  $\partial\R^m_+$. 
  Before applying the differential equation, we calculate
  \begin{eqnarray*}
    \frac{\partial}{\partial a_k}\,\CF(a)
    &=&-\int_{\R^m}\big(|\Delta v_i(y)|^2+|Dv_i(y)|^2\big)\partial_k\psi(y-a)\,\d y\\
    &=&-\int_{\R^m}\big(|\Delta v_i|^2\Div\xi-2Dv_i\cdot D\partial_kv_i\,\psi_a\big)\,\d y,
  \end{eqnarray*}
  using integration by parts in the last step. We re-write the first
  term in the last integral by an application of the
  differential equation \eqref{stationary-bi} for the maps $v_i$,
  with the result 
  \begin{align*}
    &\left|\frac{\partial}{\partial a_k}\,\CF(a)\right|^2\\
    &\qquad\le
     2\left|\int_{\R^m}
     4\Delta v_i\cdot \partial_\ell\partial_k v_i\,\partial_\ell\psi_a 
     +2\Delta v_i\cdot\partial_kv_i\Delta\psi_a
     -2(Dv_i\cdot D\partial_kv_i)\psi_a
     \,\d y\right|^2\\
   &\qquad\qquad+
   2\left|\int_{\R^m}
    2\Delta v_i\cdot \Delta \big[\Pi(v_i)(\psi_a\partial_kh_i)\big]     
     \,\d y\right|^2\\
   &\qquad\le c\|v_i\|^2_{W^{2,2}(\C_1^+(a))}\int_{\C_1^+(a)}
       \big(|D\partial_kv_i|^2+|\partial_kv_i|^2\big)\,\d y\\
   &\qquad\qquad
       +c\|\Delta
       v_i\|_{L^2(\C_1^+(a))}^2(1+\|v_i\|^2_{W^{2,2}(\C_1^+(a))})
       \|Dh_i\|^2_{C^2(\C_1^+(a))}.
  \end{align*}
  The right-hand side vanishes in the limit $i\to\infty$ because the sequence $v_i$
  is bounded in $W^{2,2}(\C_1^+(a))$ by \eqref{mor} and we have the convergences    
  \eqref{con}, \eqref{con1}, and $h_i\to c_\ast$ in $C^4(\C_1^+(a))$.
  Therefore, the above estimate implies 
  $\frac{\partial}{\partial a_k}\,\CF\to 0$ uniformly on $B_R^+$ for every $R>0$ and
  every $1\le k\le m-4$, as $i\to\infty$. Since we know furthermore
  $\CF((0,a''))\le2^{1-m}\eps_0$ for all $a''\in B_{R_i}^4\cap \R^4_+$ by
  \eqref{max}, we arrive at
  \begin{equation*}
    \int_{\C_{1/2}^+(a)}\big(|\Delta v_i|^2+|Dv_i|^2\big)\,\d y\le\CF(a)<2^{2-m}\eps_0
  \end{equation*}
  for all $a\in B_R^+$, if $i\ge i_0(R)$ is chosen sufficiently
  large. Applying Corollary~\ref{cor:uniform} on 
  $B_{1/2}^+(a)\subset\C_{1/2}^+(a)$, we infer the bound
  \begin{equation*}
    \sup_{i\in\N}\|Dv_i\|_{C^3(B^+_{1/4}(a))}\le c(\Lambda,\alpha,m,N)
    \qquad\mbox{for all }a\in B_R^+.
  \end{equation*}
  By Arz\'ela-Ascoli, this implies convergence $v_i\to\hat v$ in
  $C^3(B_R^+,N)$. Since we have already established almost everywhere
  convergence $v_i\to\hat v$, it is not necessary to pass to another
  subsequence. Therefore, the $C^3$-convergence holds on every ball
  $B_R^+$ with $R>0$.\\

  {\em Step 7 (Conclusion). }  Keeping in mind that
  $\delta_i^{-1}p_{i,m}\to b\in[0,\infty)$ as $i\to\infty$, we deduce from the
  $C^3$-convergence and the identity \eqref{max} that 
  \begin{align*}
    &\int_{\C_1^+(0,b)}\big(|\Delta\hat v|^2+|D\hat v|^2\big)\,\d y\\
    &\qquad
    =\lim_{i\to\infty}\int_{\C_1^+(0,\delta_i^{-1}p_{i,m})}\big(|\Delta v_i|^2+|Dv_i|^2\big)\,\d y=2^{1-m}\eps_0>0.
  \end{align*}
  Consequently, the map $\hat v$, and thus also the restriction $v=\hat v|_{\{0\}\x\R^4_+}$,
  is not constant. On the other hand, \eqref{mor} implies for any $R>0$
  \begin{equation*}
    \int_{B_R^4\cap\R^4_+}|D^2v|^2\,\d y
    =c(m)R^{4-m}\int_{\C_R^+}|D^2\hat v|^2\,\d y\le c(m)\Lambda,
  \end{equation*}
  which yields $D^2v\in L^2(\R^4_+,N)$.
  Since the maps $v_i$ are weakly biharmonic
  and converge to $\hat v$ in $C^3_{\mathrm{loc}}(\R^m_+,N)$, the map $\hat v$ is biharmonic as
  well, and so is its restriction $v=\hat v|_{\{0\}\x\R^4_+}$.
  As a consequence, we even have $v\in C^\infty(\R^4_+,N)$,
  cf.~\cite{LammWang}.
  Thus, $v\in C^\infty(\R^4_+,N)$ is a non-constant biharmonic map with
  finite bi-energy and 
  constant boundary values on $\partial\R^4_+$.
  By stereographic projection and the conformal invariance of the
  Paneitz-bi-energy, this corresponds to a smooth, non-constant
  Paneitz-biharmonic $4$-halfsphere with constant boundary values, cf.
  Lemma~\ref{lem:liouville-4}\,(ii).
  This completes
  the proof of the theorem. 
\end{proof}

\subsection{Proof of Theorem~\ref{thm:compact}}
  We consider a sequence $u_i\in W^{2,2}(B_4^+,N)$, $i\in\N$, of
  variationally biharmonic maps with respect to Dirichlet values
  $g_i\in C^\infty(B_4^+,N)$ on $T_4$ for which 
  \begin{equation*}
    \sup_{i\in\N}\big(\|u_i\|_{W^{2,2}(B_4^+)}+\|g_i\|_{C^{4,\alpha}(B_4^+)}\big)<\infty
  \end{equation*}
  holds true, for some $\alpha\in(0,1)$.
  In view of Lemma~\ref{lemma:morrey}, this estimate implies the uniform Morrey
  space bound 
  \begin{equation*}
    \sup_{i\in\N}\big(\|D^2u_i\|_{L^{2,m-4}(B_2^+)}+\|Du_i\|_{L^{2,m-2}(B_2^+)}\big)<\infty. 
  \end{equation*}
  This means that there exists
  a constant $\Lambda\ge1$ so that $(u_i,0)\in\B$ for all $i\in\N$. 
  Therefore, after passing to a subsequence we have convergence
  $(u_i,0)\Bto(u,\nu)\in\B$ in the sense of Definition
  \ref{def:BLambda}, in the limit $i\to\infty$. 
  Let us assume for
  contradiction that there is no strong subconvergence $u_i\to u$ in
  $W^{2,2}(B_{1}^+,\R^L)$. Then Lemma \ref{defect}\,(ii) implies
  $~\spt\nu\cap\overline B_1^+\neq\varnothing$.
  Therefore, Theorem \ref{sphere}
  yields the existence of a non-constant Paneitz-biharmonic $4$-sphere
  or non-constant Paneitz-biharmonic $4$-halfsphere
  with constant boundary values.
  But the existence of such maps is excluded by the assumptions of
  Theorem~\ref{thm:compact},
  so that the claimed strong convergence holds true.
  \hfill$\square$\\

  \section{Liouville type theorems for biharmonic maps on a half
    space}
  \label{sec:liouville}

  The next theorem excludes the existence of certain non-constant
  biharmonic maps which might occur as tangent maps in singular
  boundary points. We remark that since these maps are homogeneous of
  degree zero, they can also be interpreted as maps
  $v:S^{m-1}_+\to N$ that are biharmonic with respect to a certain
  Paneitz-bi-energy, cf. \cite[Lemma 5.1]{Scheven1}. We note that the
  case $m=5$, which corresponds to Paneitz-biharmonic $4$-halfspheres, 
  can not be treated with the same methods and will be postponed to
  Lemma~\ref{lem:liouville-4}. 
  
\begin{thm}\label{thm:Liouville}
  Let $m\ge 6$ and
  assume that $u\in W^{2,2}_{\mathrm{loc}}(\R^m_+,N)$ is a stationary
  biharmonic map that is homogeneous of degree zero and admits
  constant Dirichlet boundary values in the sense $(u,Du)=(c,0)$ on
  $\partial\R^m_+$.
  Moreover, we assume that $u$ is smooth in a neighbourhood of
  $\partial\R^m_+\setminus\{0\}$. Then $u$ is constant in $\R^m_+$. 
\end{thm}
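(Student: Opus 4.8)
The plan is to adapt the strategy of the interior Liouville theorem in \cite{Scheven1} to the boundary situation, reducing the statement to a rigidity property of the cross-section on the upper half sphere.

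\emph{Preliminary reductions.} Since the Dirichlet datum is the constant $c$, one first checks that $u$ satisfies the boundary stationarity identity \eqref{stationary-bi} with $g\equiv c$: away from the origin this follows by a classical integration by parts, using that $u$ is smooth there and that $u=c$, $Du=0$ on $\partial\R^m_+$; the error terms produced by cutting off a neighbourhood of $0$ are controlled by the homogeneity bounds $\int_{B_\rho^+}\big(|\Delta u|^2+|D^2u|^2\big)\,\d x\le c\rho^{m-4}$ and $\int_{B_\rho^+}|Du|^2\,\d x\le c\rho^{m-2}$ and hence vanish as $\rho\downarrow0$. Consequently the boundary monotonicity formula of Theorem~\ref{thm:monotonicity} applies with $\chi=K=0$, and since $\partial_Xu\equiv0$ by the $0$--homogeneity of $u$, the quantity $\Phi_u(0;\cdot)$ is constant. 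In particular, if this constant is smaller than the threshold $\eps_1$ of Theorem~\ref{epsreg}, then $u$ is smooth at $0$, hence smooth and $0$--homogeneous on all of $\overline{\R^m_+}$, thus constant; so we may assume it is $\ge\eps_1$. Moreover $\sing(u)$ is a cone, and by applying the interior Liouville theorem of \cite{Scheven1} at a point of $\sing(u)\cap\R^m_+\setminus\{0\}$ --- where, by the $0$--homogeneity of $u$, a further tangent map is translation invariant in one direction and therefore descends to a $0$--homogeneous biharmonic map of a Euclidean space of dimension $m-1\ge5$ --- one reduces to the case in which $u$ is smooth on $\overline{\R^m_+}\setminus\{0\}$.

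\emph{Reduction to the sphere and the Rellich--Pohozaev identity.} Writing $v:=u|_{S^{m-1}_+}$, the $0$--homogeneity gives $u(x)=v(x/|x|)$ and $\Delta u=|x|^{-2}\Delta_Sv$, so that $\Delta^2u=|x|^{-4}\big(\Delta_S^2v-2(m-4)\Delta_Sv\big)$; hence $v\in C^\infty(\overline{S^{m-1}_+},N)$ is a critical point of the fourth order functional $P_m(w):=\int_{S^{m-1}_+}\big(|\Delta_Sw|^2+2(m-4)|\nabla_Sw|^2\big)\,\d\HM^{m-1}$, that is, $\Delta_S^2v-2(m-4)\Delta_Sv\perp T_vN$, cf.\ \cite[Lemma~5.1]{Scheven1}. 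The boundary conditions $(u,Du)=(c,0)$ on $\partial\R^m_+$ translate into the Cauchy--type data $v=c$ and $\partial_\nu v=0$, and hence $\nabla_Sv=0$, on $\partial S^{m-1}_+=S^{m-2}$. I would then test the equation for $v$ against $Z\cdot\nabla_Sv$, where $Z$ is a suitable combination of conformal Killing fields of $S^{m-1}$ tangent to $S^{m-2}$ --- concretely the tangential gradients of the linear coordinate functions $x_1,\dots,x_{m-1}$ --- and integrate by parts four times. Since $\nabla_Sv$ and $\partial_\nu\nabla_Sv$ are controlled on $S^{m-2}$ by $v=c$, $\partial_\nu v=0$, all boundary integrals generated by these integrations by parts vanish; using the conformal Killing relation for $Z$ and the Bochner identity on the round sphere, the remaining bulk integrals combine into an identity whose leading part is $\gamma_m\int_{S^{m-1}_+}|\Delta_Sv|^2\,\d\HM^{m-1}+\delta_m\int_{S^{m-1}_+}|\nabla_Sv|^2\,\d\HM^{m-1}=0$, with $\gamma_m,\delta_m$ depending only on $m$. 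The point is that $\gamma_m$ is a positive multiple of $m-5$: it is strictly positive for $m\ge6$, forcing $\int_{S^{m-1}_+}|\Delta_Sv|^2=\int_{S^{m-1}_+}|\nabla_Sv|^2=0$ and hence $v\equiv c$, so that $u\equiv c$; and it vanishes exactly for $m=5$, which reflects the conformal invariance of the Paneitz bi-energy in dimension $4$ and is precisely why that borderline case must be handled separately in Lemma~\ref{lem:liouville-4}.

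\emph{Main difficulty.} The delicate point is the Rellich--Pohozaev computation itself: the careful bookkeeping of the boundary terms on $S^{m-2}$ (which involve derivatives of $v$ up to third order and must all be shown to cancel or to vanish by the Cauchy data), and, above all, the verification that after absorbing the contributions of the lower order term $2(m-4)|\nabla_Sw|^2$ and of the curvature of the round metric, the coefficient $\gamma_m$ of $\int_{S^{m-1}_+}|\Delta_Sv|^2$ retains the correct sign for every $m\ge6$. An alternative route I would keep in reserve, avoiding the passage to the sphere, is to work directly on $\R^m_+$ with the conservation laws associated to the vector fields tangent to $\partial\R^m_+$ (translations $e_k$, rotations, and the special conformal fields $2x_kx-|x|^2e_k$ with $k<m$); for a $0$--homogeneous map these reduce, together with the monotonicity formula, to a closed system of integral identities on $S^{m-1}_+$, with $m\ge6$ entering through the integrability of the resulting weighted integrals near the origin.
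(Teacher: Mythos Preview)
Your proposal has the right flavour---a Pohozaev-type identity is indeed the mechanism---but it misses the key choice of test direction, and this is a genuine gap rather than a matter of bookkeeping.

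The paper's proof is shorter and works directly on $\R^m_+$ with the stationarity identity. The decisive move is to use the vector field $\xi(x)=\eta(|x|)\,e_m$, i.e.\ the \emph{normal} direction to the boundary. This field is not admissible in \eqref{stationary-bi-interior}; instead one exploits the assumed smoothness near $\partial\R^m_+\setminus\{0\}$ to integrate by parts via Gauss' theorem, producing a boundary integral. Using $Du=0$ and hence $\Delta u=\partial_m^2u$ on $\partial\R^m_+$, this boundary term collapses to $-\int_{\partial\R^m_+}\eta(|x|)\,|\partial_m^2u|^2\,\d x\le 0$. On the interior side, the $0$--homogeneity and two elementary integrations by parts give
\[
(m-5)\int_0^\infty r^{m-6}\eta(r)\,\d r\int_{S_1^+}x_m\big[|\Delta u|^2+(m-3)|Du|^2\big]\,\d\HM^{m-1}\le 0,
\]
which for $m\ge6$ forces $Du\equiv0$ on $S_1^+$. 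The factor $(m-5)$ you anticipated appears here, but it comes from the normal test direction, not from a combination of tangential ones.

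By contrast, your choice of conformal Killing fields $Z=\nabla_S x_k$ with $k<m$ is tangential to $\partial S^{m-1}_+$, so---as you correctly note---all boundary terms vanish. But that is precisely the problem: the same computation carried out on $\R^m_+$ with $\xi=\eta(|x|)e_k$, $k<m$, yields only the neutral identity $\int_{S_1^+}|\Delta u|^2x_k=2\int_{S_1^+}\Delta u\cdot\partial_k u$, with no sign since $x_k$ changes sign on $S^{m-1}_+$. No combination of these tangential identities produces a coercive inequality; the signed boundary term $-\int\eta|\partial_m^2u|^2$ simply does not arise. Your ``alternative route'' suffers from the same issue, as it again restricts to translations $e_k$ and conformal fields with $k<m$. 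The essential insight you are missing is that one must test in the \emph{normal} direction and use the smoothness hypothesis near the boundary to justify the resulting boundary integral---this is exactly what the hypothesis is for.

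Finally, your preliminary reductions (monotonicity formula, interior Liouville to clear $\sing(u)\cap\R^m_+$) are not needed: the paper's argument uses only the homogeneity and the smoothness near $\partial\R^m_+\setminus\{0\}$ already assumed in the statement.
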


\begin{proof}
  We consider the vector field $\xi\in C^\infty(\R^m_+,\R^m)$ defined
  by $\xi(x):=\eta(|x|)e_m$ for a function $\eta\in
  C^\infty_0((0,\infty),[0,\infty))$. Since this vector field does not vanish
  on $\partial\R^m_+$, it is not admissible in the differential
  equation \eqref{stationary-bi-interior}. However, because $u$ is
  smooth in a neighbourhood of $\partial\R^m_+\setminus\{0\}$, we
  infer from Gau\ss' theorem that
  \begin{align}\nonumber\label{PDE-stationary}
    \mathrm{I}:=&\int_{\R^m_+} \big(4\Delta
    u\cdot \partial_i\partial_ju\,\partial_i\xi_j
    +2\Delta u\cdot \partial_iu\,\Delta\xi_i
    -|\Delta u|^2\partial_i\xi_i\big)\,\d x\\\nonumber
    &\quad=
    -\int_{\partial\R^m_+}\big(4\Delta
    u\cdot \partial_m\partial_ju\,\xi_j
    +2\Delta u\cdot \partial_iu\,\partial_m\xi_i\\
    &\qquad\qquad\qquad\qquad
    -2\partial_m(\Delta u\cdot\partial_iu)\xi_i
    -|\Delta u|^2\xi_m\big)\,\d x=:\mathrm{II},
  \end{align}
  where we used the convention to sum all double indices from $1$ to
  $m$. Using the definition of $\xi$ and the fact $Du=0$ on
  $\partial\R^m_+$, we compute
  \begin{align}\nonumber\label{calc-II}
    \mathrm{II}
    &=
    -\int_{\partial\R^m_+}\eta(|x|)\big(4\Delta
    u\cdot \partial_m^2u
    -2\partial_m(\Delta u\cdot\partial_mu)
    -|\Delta u|^2\big)\,\d x\\\nonumber
    &=
    -\int_{\partial\R^m_+}\eta(|x|)\big(2\Delta
    u\cdot \partial_m^2u
    -|\Delta u|^2\big)\,\d x\\
    &=
    -\int_{\partial\R^m_+}\eta(|x|)\,|\partial_m^2 u|^2\,\d x.
  \end{align}
  In the last step, we used the fact that $u$ is constant on
  $\partial\R^m_+$, which implies $\Delta u=\partial^2_mu$ on
  $\partial\R^m_+$. Moreover, with
  the abbreviation $r=|x|$, the definition of $\xi$ implies
  \begin{align*}
    \mathrm{I}&=
    \int_{\R^m_+}\tfrac1r \eta'(r)4\Delta u\, x_i\partial_i\partial_mu\,\d x\\
    &\qquad+\int_{\R^m_+} 2\Delta u\cdot \partial_m u
          \,r^{1-m}\tfrac{\partial}{\partial r}(r^{m-1}\eta'(r))\,\d x\\
    &\qquad-\int_{\R^m_+} \tfrac1r \eta'(r) |\Delta u|^2x_m\,\d x\\
    &=: \mathrm{I}_1+\mathrm{I}_2+\mathrm{I}_3.
  \end{align*}
  Since $u$ is homogeneous of degree zero, the derivative
  $\partial_mu$ is homogeneous of degree $-1$, which implies
  $x_i\partial_i\partial_mu=-\partial_mu$. By the homogeneity of $u$,
  we thus infer
  \begin{align}\label{calc-I1}
    \mathrm{I}_1
    =
    -4\int_0^\infty r^{m-5}\eta'(r) \,\d r \int_{S_1^+}\Delta u\cdot \partial_mu \,d\mathcal{H}^{m-1}.
  \end{align}
  Using integration by parts, we re-write the term $\mathrm{I}_2$ to
  \begin{align}\label{calc-I2}
    \mathrm{I}_2
    &=
    2\int_0^\infty r^{-3}\tfrac{\partial}{\partial r}(r^{m-1}\eta'(r))\,\d r
    \int_{S_1^+} \Delta u\cdot \partial_m u\,d\mathcal{H}^{m-1}\\\nonumber
    &=
    6\int_0^\infty r^{m-5}\eta'(r)\,\d r
    \int_{S_1^+} \Delta u\cdot \partial_m u\,d\mathcal{H}^{m-1}.
  \end{align}
  At this point, we used the assumption that $\eta$ has compact
  support in $(0,\infty)$, which implies that the boundary terms
  vanish. Finally, we compute
  \begin{align}\label{calc-I3}
    \mathrm{I}_3
    =
    -\int_0^\infty r^{m-5}\eta'(r)\,\d r
    \int_{S_1^+} |\Delta u|^2x_m\,d\mathcal{H}^{m-1}.
  \end{align}
  Plugging \eqref{calc-II}, \eqref{calc-I1}, \eqref{calc-I2}, and
  \eqref{calc-I3} into \eqref{PDE-stationary}, we deduce
  \begin{align}\label{conclusion-1}
    &-\int_0^\infty r^{m-5}\eta'(r)\,\d r
    \int_{S_1^+} \big[|\Delta u|^2x_m-2\Delta
    u\cdot\partial_mu\big]\,d\mathcal{H}^{m-1}\\\nonumber
    &\qquad=
    -\int_{\partial\R^m_+}\eta(|x|)\,|\partial_m^2 u|^2\,\d x\le0.
  \end{align}
  Using the homogeneity of $u$ and Gau\ss' theorem, we furthermore
  compute
  \begin{align*}
    -\int_{S_1^+}2\Delta u\cdot\partial_mu\,d\mathcal{H}^{m-1}
    &=
    -(m-3)\int_{B_1^+}2\Delta u\cdot\partial_mu\,\d x\\
    &=
    (m-3)\int_{B_1^+}2 Du\cdot\partial_m Du\,\d x,
  \end{align*}
  where the boundary integrals vanish because of
  $\frac{\partial u}{\partial r}=0$ on $S_1^+$ and $\partial_mu=0$ on
  $\partial\R^m_+$. Another application of Gau\ss' theorem implies
  \begin{align*}
    -\int_{S_1^+}2\Delta u\cdot\partial_mu\,d\mathcal{H}^{m-1}
    &=
    (m-3)\int_{B_1^+}\partial_m|Du|^2\,\d x\\
    &=
    (m-3)\int_{S_1^+}x_m|Du|^2 \,d\mathcal{H}^{m-1}.
  \end{align*}
  We use this identity in \eqref{conclusion-1} and integrate by parts
  with respect to $r$. In this way, we arrive at
    \begin{align}\label{final-liouville}
    &(m-5)\int_0^\infty r^{m-6}\eta(r)\,\d r
    \int_{S_1^+} x_m\big[|\Delta u|^2+(m-3)|Du|^2\big]
    \,d\mathcal{H}^{m-1}\\\nonumber
    &\qquad=
    -\int_{\partial\R^m_+}\eta(|x|)\,|\partial_m^2 u|^2\,\d x\le0.
  \end{align}
  Since $m>5$ and $\eta\ge0$, we infer $|Du|=0$ on $S_1^+$. By the
  homogeneity of $u$, this implies that $u$ is constant on $\R^m_+$, as claimed. 
\end{proof}

\begin{rem}\label{rem:m=5}
  \textup{For further reference we note that in the case $m=5$,
  the preceding proof yields that $D^2u\equiv0$ holds true on $\partial\R^5_+$
  for any map $u\in W^{2,2}_{\mathrm{loc}}(\R^5_+,N)$ as in
  Theorem~\ref{thm:Liouville}.
  For the verification of this assertion, we observe that
  the condition $m>5$ was only used in the very last
  step of the proof, so that identity \eqref{final-liouville} also holds in the
  case $m=5$. This identity implies $\partial_5^2u\equiv0$ on $\partial\R^5_+$,
  and since $Du\equiv0$ holds on $\partial\R^5_+$ by assumption, we deduce
  that all second derivatives vanish on the boundary. }
\end{rem}

In the case $m=5$, we obtain the result corresponding to
Theorem~\ref{thm:Liouville} under the additional assumption
of non-existence of Paneitz-biharmonic $4$-halfspheres.

\begin{lem}\label{lem:liouville-4}
  Assume that the manifold $N$ does not carry any non-constant
  Paneitz-biharmonic $4$-halfspheres with constant boundary
  values. Then the following two statements holds true.
  \begin{enumerate}
  \item Any biharmonic map $u\in C^\infty(\R^5_+\setminus\{0\},N)$
    that is homogeneous of degree zero and attains constant
    boundary values $(u,Du)=(c,0)$ on $\partial\R^5_+$ is constant.
  \item Any biharmonic map $w\in C^\infty(\R^4_+,N)$ with finite
    bi-energy and constant
    boundary values $(w,Dw)=(c,0)$ on $\partial\R^4_+$ is constant.
  \end{enumerate}
\end{lem}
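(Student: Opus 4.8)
The plan is to reduce both parts, through conformal geometry, to the hypothesis that $N$ carries no non-constant Paneitz-biharmonic $4$-halfsphere with constant boundary values. For part (i) I would use the homogeneity directly. Writing $r=|x|$ and $v:=u|_{S^4_+}\in C^\infty(S^4_+,N)$ with $S^4_+:=S^4\cap\R^5_+$, homogeneity of degree zero gives $u(x)=v(x/|x|)$, $\partial_ru\equiv0$, hence $\Delta u=r^{-2}\Delta_Sv$ and therefore
\begin{equation*}
  \Delta^2u=r^{-4}\big(\Delta_S^2v-2\Delta_Sv\big)=r^{-4}\,\mathcal{P}v
\end{equation*}
on $\R^5_+$, with the Paneitz operator $\mathcal{P}$ on $S^4$; this is the reduction recorded in \cite[Lemma 5.1]{Scheven1}, and it is exactly the algebraic fact that breaks down when $m\ge6$, where the same computation yields the non-conformal operator $\Delta_S^2-2(m-4)\Delta_S$ that the Pohozaev-type argument of Theorem~\ref{thm:Liouville} disposes of by hand. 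Since $u$ is biharmonic, $\Delta^2u\perp T_uN$, so the displayed identity gives $\mathcal{P}v\perp T_vN$ on $S^4_+$; moreover $(u,Du)=(c,0)$ on $\partial\R^5_+$ restricts to $(v,Dv)=(c,0)$ on $\partial S^4_+$. Thus $v$ is a smooth Paneitz-biharmonic $4$-halfsphere with constant boundary values, hence $v\equiv c$ by assumption, and so $u\equiv c$.

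For part (ii) I would use stereographic projection together with the conformal invariance of the Paneitz-bi-energy. Choose the projection $\sigma\colon S^4\setminus\{p\}\to\R^4$ from a point $p$ on the equator $S^3=\partial S^4_+$, normalised so that $\sigma(S^4_+)=\R^4_+$ and $\sigma(S^3\setminus\{p\})=\partial\R^4_+$. Then $\widetilde w:=w\circ\sigma\colon S^4_+\setminus\{p\}\to N$ is smooth and Paneitz-biharmonic there, has finite Paneitz-bi-energy $P_{S^4_+}(\widetilde w)=E_2(w)<\infty$, and satisfies $(\widetilde w,D\widetilde w)=(c,0)$ on $\partial S^4_+\setminus\{p\}$. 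The heart of the matter is a boundary removable-singularity statement: such a map extends to a smooth Paneitz-biharmonic map on all of $S^4_+$. I would obtain this from the $\eps$-regularity theorem: flattening $S^4_+$ near $p$ turns the situation into that of Theorem~\ref{epsreg} for a metric close to the Euclidean one, and because $m=4$ the relevant energy is scale-invariant, so $\int_{B_\rho^+(p)}|\Delta_S\widetilde w|^2\to0$ as $\rho\downarrow0$ by absolute continuity of the integral; the remaining quantity $\rho^{-2}\int_{B_\rho^+(p)}|D\widetilde w|^2$ is controlled by the Morrey estimate of Lemma~\ref{lemma:morrey} (in the metric version), which applies once one knows that $\widetilde w$ satisfies the stationarity identity~\eqref{stationary-bi} up to $p$: away from $p$ this holds since $\widetilde w$ is classically biharmonic, and it persists across $p$ because a single point has vanishing $W^{2,2}$-capacity in dimension four, so the admissible test fields in~\eqref{stationary-bi} may be cut off near $p$ with vanishing error. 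Once the smallness hypothesis of Theorem~\ref{epsreg} holds on a small half-ball about $p$, $\widetilde w$ is smooth up to $p$, hence a genuine Paneitz-biharmonic $4$-halfsphere with constant boundary values, hence constant; therefore $w$ is constant.

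The step I expect to be the main obstacle is the boundary removable-singularity argument in part (ii). In the critical dimension $m=4$ the bi-energy is scale-invariant, so finite energy does not by itself yield the scale-invariant smallness required by Theorem~\ref{epsreg} at the point $p$; the work lies in producing the Morrey bound near $p$ from the monotonicity formula and in justifying, via the capacity of a point, that this bound and the stationarity identity remain valid through the puncture. Part (i), by contrast, is essentially immediate once the homogeneous reduction to the conformal Paneitz operator is recorded and compared with Theorem~\ref{thm:Liouville}.
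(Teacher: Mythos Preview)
Your argument for part~(i) is exactly the paper's: compute $\Delta^2u=r^{-4}(\Delta_S^2v-2\Delta_Sv)$ from homogeneity, read off that $v=u|_{S^4_+}$ is Paneitz-biharmonic with constant boundary data, and invoke the hypothesis.

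For part~(ii) you and the paper share the same opening move (stereographic projection to $S^4_+\setminus\{p\}$, conformal invariance of the Paneitz-bi-energy) and the same endgame ($\eps$-regularity removes the singularity at $p$). The difference is in how the removable-singularity step is executed, and here your proposal has a gap: both Theorem~\ref{epsreg} and Lemma~\ref{lemma:morrey} are stated and proved only for $m\ge5$, so you cannot invoke them directly on the four-dimensional $S^4_+$. Your plan to reach the Morrey smallness of $\rho^{-2}\int_{B_\rho^+(p)}|D\widetilde w|^2$ through the monotonicity formula therefore does not go through as written. (In fact this quantity can be controlled more cheaply: since $\widetilde w\in W^{2,2}\cap L^\infty$ one has $\widetilde w\in W^{1,4}$, and H\"older gives $\rho^{-2}\int_{B_\rho^+}|D\widetilde w|^2\le c\big(\int_{B_\rho^+}|D\widetilde w|^4\big)^{1/2}\to0$; together with a four-dimensional boundary $\eps$-regularity result such as \cite{LammWang} this would close your argument.)

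The paper sidesteps the dimension mismatch by a neat trick: rather than staying on $S^4_+$, it takes the \emph{homogeneous degree-zero extension} $u(x):=\widetilde w(x/|x|)$ to $\R^5_+$ and runs the removable-singularity argument there. By the identity from part~(i), $u$ is biharmonic on $\R^5_+\setminus(\R_{\ge0}\,p)$; a line has vanishing $W^{2,2}$-capacity in dimension~$5$, so $u$ is weakly biharmonic on all of $\R^5_+$. The Morrey smallness near the singular ray comes \emph{for free} from homogeneity: the rescaled energy on $B_\rho^+(y)$ with $y$ near $p$ equals a fixed multiple of $\int$ over a small spherical cap of $|D^2\widetilde w|^2+|D\widetilde w|^4$, which tends to zero by absolute continuity since $\widetilde w\in W^{2,2}\cap W^{1,4}(S^4_+)$. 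No monotonicity formula is needed, and the $\eps$-regularity invoked is the five-dimensional one already available in the paper (via \cite[Lemma~3.1]{GongLammWang}). This lifting-to-one-dimension-higher is the idea you are missing; it lets the paper stay entirely within its own $m\ge5$ toolkit.
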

\begin{proof}
 (i) Since $u$ is homogeneous of degree zero, we have 
 \begin{equation}\label{Bi-Laplace}
   \Delta^2u=\big(\tfrac{\partial^2}{\partial
     r^2}+\tfrac{4}{r}\tfrac{\partial}{\partial
     r}+\tfrac1{r^2}\Delta_S)\tfrac1{r^2}\Delta_Su
   =\tfrac1{r^4}(\Delta_S^2u-2\Delta_Su)
 \end{equation}
 in $\R^5_+\setminus\{0\}$, where $\Delta_S$ denotes the
 Laplace-Beltrami operator on $S^4_+$, and we abbreviated $r=|x|$.
 Since $u$ is biharmonic on $\R^5_+\setminus\{0\}$, we deduce that
 the restriction $v:=u|_{S^4_+}$ is Paneitz-biharmonic on $S^4_+$. Hence,
 the map $v$ is constant by assumption, and so is its homogeneous
 extension $u$. 
  
 (ii)  By stereographic projection, the biharmonic map $w\in
  C^\infty(\R^4,N)$ gives rise to a map $v\in
  C^\infty(S^4_+\setminus\{e_1\},N)$. The conformal invariance of the
  Paneitz-bi-energy, see \cite{Paneitz,Chang}, implies that 
  \begin{equation*}
    P_{S^4_+}(v)=\int_{\R^4_+}|\Delta w|^2\,\d x<\infty
  \end{equation*}
  and that $v\in W^{2,2}(S^4_+,N)$ is a critical point of the
  Paneitz-bi-energy. More precisely, the map $v$ is Paneitz-biharmonic
  on $S^4_+\setminus\{e_1\}$. To conclude, it remains to show that the
  singularity in $e_1$ can be removed. To this end, we consider the homogeneous
  extension $u(x):=v(\frac{x}{|x|})$. The identity~\eqref{Bi-Laplace}
  implies that $u\in W^{2,2}_{\mathrm{loc}}(\R^5_+,N)$ is biharmonic
  on $\R^5_+\setminus(\R_{\ge0}e_1)$. Since $\R_{\ge0}e_1$ is a
  one-dimensional set and is therefore negligible with respect to the
  $W^{2,2}$-capacity, we infer that $u$ is weakly biharmonic on
  $\R^5_+$. Moreover, since $u$ is the homogeneous extension of a map
  in $W^{2,2}\cap W^{1,4}(S^4_+,N)$, for any $\eps>0$ we can choose a radius $\delta>0$
  sufficiently small to ensure
  \begin{equation}\label{small-Morrey}
    \sup_{B_\rho^+(y)\subset
      B_\delta(e_1)}\frac1\rho\int_{B_\rho^+(y)}\big(|D^2u|^2+|Du|^4\big)\d
    x\le\eps.
  \end{equation}
  Therefore, the $\eps$-regularity result from \cite[Lemma 3.1]{GongLammWang}
  can be applied, which yields that $v$ is smooth in a neighbourhood of
  $e_1$. We point out that in view of \eqref{small-Morrey}, no
  monotonicity formula is required for this result. Consequently, we
  have shown that $v\in C^\infty(S^4_+,N)$ is a Paneitz-biharmonic map
  with constant Dirichlet boundary values. By assumption, the map $v$
  is constant, and so is the map $w\in
  C^\infty(\R^4_+,N)$. This completes the proof of the lemma. 
\end{proof}

We end this section with an example of a target manifold for which our
standing assumptions on the non-existence of Paneitz-biharmonic spheres are satisfied.

\begin{prop}\label{prop:flat-torus}
  Let $N=S_{r_1}^1\times\ldots\times S_{r_n}^1\subset\R^{2n}$ be a
  flat torus, where $n\ge1$ and $r_i>0$ for $i=1,\ldots,n$. Then $N$
  does neither carry a non-constant Paneitz-biharmonic $4$-sphere nor
  a non-constant Paneitz-biharmonic $4$-halfsphere with constant
  boundary values. 
\end{prop}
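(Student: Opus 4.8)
The plan is to exploit the flatness of the target together with the simple connectivity of $S^4$ and of $S^4_+$. Since $N=S^1_{r_1}\times\dots\times S^1_{r_n}\subset\R^2\times\dots\times\R^2=\R^{2n}$ is a Riemannian product and the Paneitz operator $\mathcal Pu=\Delta_S^2u-2\Delta_Su$ acts componentwise in $\R^{2n}$, the orthogonality $\mathcal Pu\perp T_uN$ decouples into the analogous conditions for the factors $u_j\colon S^4\to S^1_{r_j}\subset\R^2$, and likewise on $S^4_+$. Hence it suffices to treat a single circle $N=S^1_r$. Because $S^4$ and $S^4_+$ are simply connected, a map $u\in C^4$ into $S^1_r$ admits a lift $\theta$ of class $C^4$ with $u=r(\cos\theta,\sin\theta)$; in the halfsphere case the lift can be chosen so that $\theta\equiv\theta_0$ for a constant $\theta_0$ on the connected boundary $\partial S^4_+$, and the condition $Du=0$ on $\partial S^4_+$ then forces $\partial_\nu\theta\equiv0$ there.

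Next I would rewrite the Paneitz-bi-energy in terms of $\theta$. Using $|Du|^2=r^2|\nabla_S\theta|^2$ and the identity $\Delta_Su=re^{i\theta}(i\,\Delta_S\theta-|\nabla_S\theta|^2)$ (identifying $\R^2\cong\mathbb C$), a short computation gives
\[
  P_{S^4}(u)=r^2\int_{S^4}\big[(\Delta_S\theta)^2+|\nabla_S\theta|^4+2|\nabla_S\theta|^2\big]\,dx,
\]
and the same formula on $S^4_+$. Since $\mathcal P$ is the Euler--Lagrange operator of $P_{S^4}$ and every tangential variation of $u$ along $N$ corresponds, to first order, to a variation $\theta\mapsto\theta+s\psi$, the map $u$ is Paneitz-biharmonic exactly when $\theta$ solves the Euler--Lagrange equation of the above integrand, namely
\[
  \Delta_S^2\theta-2\,\Div\!\big(|\nabla_S\theta|^2\nabla_S\theta\big)-2\Delta_S\theta=0
\]
in $S^4$, respectively in the interior of $S^4_+$.

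To conclude, I would test this equation with $\theta$ in the sphere case and with $\theta-\theta_0$ in the halfsphere case, and integrate by parts twice. In both cases the result is
\[
  \int\big[(\Delta_S\theta)^2+2|\nabla_S\theta|^4+2|\nabla_S\theta|^2\big]\,dx=0,
\]
whence $\nabla_S\theta\equiv0$, so that $\theta$ is constant and $u$ is constant (equal to $c$ in the halfsphere case). On the closed sphere no boundary terms occur. The main point requiring care is the halfsphere case: one must verify that every boundary term produced by the integrations by parts vanishes, which is precisely where the full clamped boundary datum $(u,Du)=(c,0)$ enters --- the value condition $u=c$ (i.e.\ $\theta\equiv\theta_0$) annihilates the terms carrying a factor $\theta-\theta_0$, while the derivative condition $Du=0$ (i.e.\ $\partial_\nu\theta\equiv0$) annihilates the single remaining term $\int_{\partial S^4_+}\partial_\nu\theta\,\Delta_S\theta$. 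Everything else is a routine computation.
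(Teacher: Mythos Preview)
Your argument is correct and takes a genuinely different route from the paper. You exploit that $S^4$ and $S^4_+$ are simply connected to lift each circle-valued component $u_j$ to a real-valued function $\theta_j$, reducing the problem to an unconstrained scalar Euler--Lagrange equation; testing with $\theta_j-\theta_0$ and integrating by parts then immediately gives a sum of non-negative terms equal to zero. The boundary bookkeeping is exactly as you describe: three of the four boundary terms vanish because $\theta=\theta_0$ on $\partial S^4_+$, and the remaining term $-\int_{\partial S^4_+}\partial_\nu\theta\,\Delta_S\theta$ vanishes because $Du=0$ forces $\partial_\nu\theta=0$.

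The paper proceeds quite differently. It does not lift to the universal cover but works directly with $u$ as a map into the embedded torus in $\R^{2n}$: it passes to the zero-homogeneous extension on $\R^5_+$, uses Remark~\ref{rem:m=5} (derived from the stationarity identity with test field $\eta(|x|)e_m$) to obtain $D^2u\equiv0$ on $\partial\R^5_+$, and then tests the weak Paneitz equation with the tangential field $V=\Delta_S^\top u$. A careful computation using that the flat torus has vanishing curvature and a parallel second fundamental form $\nabla^\perp A\equiv0$ produces the inequality $\int_{S^4_+}\big(2|\Delta_S^\top u|^2+4|A(u)(\partial_iu,\partial_iu)|^2\big)\le0$, whence both the tangential and normal parts of $\Delta_Su$ vanish; thus $u$ is harmonic in $\R^5_+\setminus\{0\}$, hence smooth across the origin, hence constant by homogeneity. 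Your approach is more elementary and transparent for this particular target because it bypasses the second fundamental form machinery entirely via the lift. The paper's approach, on the other hand, is formulated in a way that is closer to the intrinsic Riemannian structure of the target (flat curvature, parallel $A$) and reuses the boundary analysis already developed in Theorem~\ref{thm:Liouville}; it would also adapt more readily to flat targets that are not tori, where no global lift to $\R^n$ is available from simple connectivity of the domain alone.
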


\begin{proof}
  The assertion concerning the full spheres has already been proved in
  \cite[Lemma 5.4]{Scheven1}. Here, we demonstrate that the same argument can
  be applied to yield the constancy of the corresponding halfspheres
  with constant boundary values.

  To this end, we consider a Paneitz-biharmonic map $u\in C^\infty(S^4_+,N)$ with
  constant boundary values in the sense $(u,Du)=(c,0)$ on $\partial
  S^4_+$. We extend $u$ to a function on $\R^5_+\setminus\{0\}$
  that is homogeneous of
  degree zero. Because of identity~\eqref{Bi-Laplace}, the extended
  function, which we still denote by $u$,
  is biharmonic on $\R^5_+\setminus\{0\}$. Therefore, Remark~\ref{rem:m=5}
  implies $D^2u\equiv0$ on $\partial\R^5_+$.   
  For every vector field $V\in C^\infty(S^4_+,\R^{2n})$
  with $V(x)\in T_{u(x)}N$ for every $x\in S^4_+$, the fact
  $\Delta_S^2u(x)-2\Delta_S u(x)\perp T_{u(x)}N$ for all $x\in S^4_+$
  and two integrations by parts imply
  \begin{align}\label{weak-Euler}
    0&=\int_{S^4_+}(\Delta_S^2u-2\Delta_Su)\cdot V\,\d\HM^4\\\nonumber
    &=\int_{S^4_+}(\Delta_S u\cdot \Delta_S V-2\Delta_Su\cdot V)\,\d\HM^4
    +\int_{\partial S^4_+}\big(
    \Delta_Su\cdot\partial_5V-\partial_5\Delta_S u\cdot V\big)\d\HM^3.
    \end{align}
    Since $\Delta_Su=0$ on $\partial S^4_+$, the boundary integral
    vanishes for every test vector field 
    with $V\equiv0$ on
    $\partial S^4_+$. In particular, this holds true for the choice
    $V:=\Delta_S^\top u$, where $\Delta_S^\top$ denotes the tangential
    part of the Laplace-Beltrami operator on $S^4_+$. We assume
    that $V$ is extended to a function on $\R^5_+\setminus\{0\}$
    that is homogeneous
    of degree zero. More precisely, this means that
    $V(x)=|x|^2\nabla^u_{e_i}\partial_iu(x)$ for any
    $x\in\R^5_+\setminus\{0\}$, where we used the notation $\nabla^u$ for
    the covariant derivative on the 
    bundle $u^\ast\mathrm{T}N$.  Since $u$ and $V$ are both homogeneous of degree
    zero, we have
    \begin{equation*}
      \Delta_S u
      =
      \partial_i\partial_i u
      =
      \Delta_S^\top u+A(u)(\partial_iu,\partial_iu)
    \end{equation*}
    and
    \begin{equation*}
      \Delta_SV
      =
      \partial_k\partial_kV
      =
      \partial_k\big[\nabla^u_{e_k}V+A(u)(\partial_ku,V)\big],
    \end{equation*}
    where $A$ denotes the second fundamental form of the submanifold
    $N\subset\R^{2n}$.
    Moreover, we employ the usual summation convention and sum all
    double indices from $1$ to $5$.
    Consequently, choosing $V:=\Delta_S^\top u$ in~\eqref{weak-Euler},
    we infer 
    \begin{align}\label{euler-lagrange}
      0&=
      \int_{S^4_+}(\Delta_S u\cdot\Delta_S V-2\Delta_Su\cdot
      V)\,\d\HM^4\\
      \nonumber
      &=\int_{S^4_+}
      \Big(\Delta_S^\top u\cdot \partial_k \nabla^u_{e_k}V
      +
      \Delta_S^\top u\cdot \partial_k[A(u)(\partial_ku,V)]\Big)
      \d\HM^4\\\nonumber
      &\qquad+
      \int_{S^4_+}\Big(A(u)(\partial_iu,\partial_iu)\cdot \partial_k\nabla^u_{e_k}V
      +
      A(u)(\partial_iu,\partial_iu)\cdot\partial_k[A(u)(\partial_ku,V)\big]\Big)\,\d\HM^4\\\nonumber
      &\qquad-\int_{S^4_+}2\Delta_S^\top u\cdot V\d\HM^4.
  \end{align}
  In the first and the third term appearing on the right-hand side, we use the
  identity 
  \begin{equation*}
     \partial_k\nabla^u_{e_k}V
     =
     \nabla^u_{e_k}\nabla^u_{e_k}V+A(u)(\partial_ku,\nabla^u_{e_k}V),
   \end{equation*}
   which follows from the definition of the covariant derivative and
   the second fundamental form.
  Next, using the equations by Weingarten and Gau\ss\ and keeping in mind
  that the Riemannian curvature of $N$ vanishes, we deduce
   \begin{align*}
     \Delta_S^\top u\cdot \partial_k[A(u)(\partial_ku,V)]
     &=
     -A(u)(\partial_ku,\Delta_S^\top u)\cdot A(u)(\partial_ku,V)\\
     &=
     -A(u)(\partial_ku,\partial_ku)\cdot A(u)(\Delta_S^\top u,V).
   \end{align*}
   Finally, because the flat torus has a parallel second fundamental
   form, i.e. $\nabla^\perp A\equiv 0$, we can compute
   \begin{align*}
     \partial_k\big[A(u)(\partial_ku,V)\big]
     &=
     A(u)(\Delta_S^\top u,V)+A(u)(\partial_ku,\nabla^u_{e_k}V).
   \end{align*}
   Using the preceding observations in~\eqref{euler-lagrange}, we deduce
   \begin{align}\label{flat-torus-1}
     0&=
     \int_{S^4_+}\big[\Delta_S^\top u\cdot
     \nabla^u_{e_k}\nabla^u_{e_k}V
     -2\Delta_S^\top u\cdot V\big]\d\HM^4\\\nonumber
     &\quad+
     2\int_{S^4_+}A(u)(\partial_iu,\partial_iu)\cdot
     A(u)(\partial_ku,\nabla^u_{e_k}V)\,\d\HM^4\\\nonumber
     &=:\mathrm{I}+\mathrm{II}. 
   \end{align}
   Next, we use the identity
   $\nabla^u_{e_k}\nabla^u_{e_k}V=\trace_S(\nabla^u\nabla^uV)$ on
   $S^4_+$,
   where we abbreviated $\trace_S$ for the trace on 
   $\mathrm{T}S^4_+$.
   Integrating by parts and recalling the definition of $V$, we then compute
   \begin{equation}
     \label{eq:I}
     \mathrm{I}
     =
     -\int_{S^4_+}\big[|\nabla^u\Delta_S^\top u|^2+2|\Delta_S^\top
     u|^2\big]\d\HM^4\le
     -2\int_{S^4_+}|\Delta_S^\top
     u|^2\d\HM^4.
   \end{equation}
   Since $N$ has vanishing Riemannian curvature, we have the following
   identity on $S^4_+$.
   \begin{align*}
     \nabla_{e_k}^uV
     &=
     \nabla_{e_k}\big(|x|^2\nabla^u_{e_\ell}\partial_\ell u\big)\\
     &=
     \nabla_{e_\ell}^u\nabla^u_{e_\ell}\partial_k u
     +2x_k\nabla_{e_\ell}^u\partial_\ell u\\
     &=
     \trace_S(\nabla^u\nabla^u\partial_ku)
     +\big(\tfrac{\nabla^2}{\partial r^2}+4\tfrac{\nabla}{\partial
       r}\big)\partial_ku+2x_k\nabla_{e_\ell}^u\partial_\ell u\\
     &=
     \trace_S(\nabla^u\nabla^u\partial_ku)
     -2\partial_ku
     +2x_k\nabla_{e_\ell}^u\partial_\ell u,
   \end{align*}
   where the last step follows from the fact
   that $\partial_ku$ is homogeneous of degree $-1$. 
   We use this identity, recall the fact $x_k\partial_ku=0$
   and integrate by parts. Since the second fundamental form is parallel, we infer
   \begin{align}\label{flat-torus-2}
     \mathrm{II}&=
     2\int_{S^4_+}A(u)(\partial_iu,\partial_iu)\cdot
     \trace_S \big[A(u)(\partial_ku,\nabla^u\nabla^u\partial_ku)\big]\,\d\HM^4\\\nonumber
     &\qquad-4\int_{S^4_+}|A(u)(\partial_iu,\partial_iu)|^2\,\d\HM^4\\\nonumber
     &=
     -
     2\int_{S^4_+}A(u)(\partial_iu,\partial_iu)\cdot
     \trace_S\big[
     A(u)(\nabla^u\partial_ku,\nabla^u\partial_ku)\big]\,\d\HM^4\\\nonumber
     &\qquad-4\int_{S^4_+}\trace_S\big[A(u)(\partial_iu,\nabla^u\partial_iu)\cdot
     A(u)(\partial_ku,\nabla^u\partial_ku)\big]\,\d\HM^4\\\nonumber
     &\qquad
     -4\int_{S^4_+}|A(u)(\partial_iu,\partial_iu)|^2\,\d\HM^4.
   \end{align}
   We note that the boundary term appearing in the integration by parts vanishes
   because of $Du\equiv0$ on $\partial\R^5_+$. The second
   integral on the right-hand side is non-positive, and so is the
   first one, because the
   Gau\ss\ equations on the flat manifold $N$ imply 
   \begin{align*}
     &-2A(u)(\partial_iu,\partial_iu)\cdot
     \trace_S\big[A(u)(\nabla^u\partial_ku,\nabla^u\partial_ku)\big]\\
     &\qquad=
     -2\trace_S\big[A(u)(\partial_iu,\nabla^u\partial_ku)\cdot
     A(u)(\partial_iu,\nabla^u\partial_ku)\big]
     \le0.
   \end{align*}
   Consequently, equation~\eqref{flat-torus-2} implies
   \begin{equation}
     \label{eq:II}
     \mathrm{II}
     \le
     -4\int_{S^4_+}|A(u)(\partial_iu,\partial_iu)|^2\,\d\HM^4.
   \end{equation}
   Using~\eqref{eq:I} and~\eqref{eq:II} in~\eqref{flat-torus-1}, we arrive at
   \begin{equation*}
     \int_{S^4_+}\big[2|\Delta_S^\top u|^2
     +
     4|A(u)(\partial_iu,\partial_iu)|^2\big]\,\d\HM^4
     \le0.
   \end{equation*}
   This implies that both the tangential and the normal part of
   $\Delta u$ vanish on $S^4_+$, which means that 
   $u:\R^5_+\setminus\{0\}\to\R^{2n}$ is a harmonic function.
   Classical regularity results for harmonic functions now yield
   $u\in C^\infty(\R^5_+,N)$. Since $u$ is homogeneous of degree
   zero, this implies that it is a constant map, as claimed. This
   completes the proof of the proposition. 
\end{proof}

\section{Full boundary regularity}
\label{sec:dim-red}

\subsection{Properties of tangent maps}
The strategy for the proof of the full boundary regularity is to
apply the dimension reduction argument by Federer in
order to prove that the dimension of the singular set is
zero. To this end, we first provide some properties of tangent maps of
a variationally biharmonic map $u\in W^{2,2}(B^+_2,N)$ in a boundary
point $a\in T_1$. For $r\in(0,1)$ we define the rescaled  maps
\begin{equation*}
  u_{a,r}\in W^{2,2}(B_{1/r}^+\,,N)\mbox{\ \ \ by\ \ \ }u_{a,r}(x):=u(a+rx).
\end{equation*}
A map $v\in W^{2,2}_{\mathrm{loc}}(\R^m_+,N)$ is called a {\em tangent map} 
of $u$ at the point $a\in T_1$ if
there is a sequence $r_i\searrow 0$ with $u_{a,r_i}\to v$ strongly in 
$W^{2,2}_{\mathrm{loc}}(\R^m,N)$ as $i\to\infty$. \\

The Compactness Theorem \ref{thm:compact} is crucial at this point to ensure
the strong convergence of suitable rescaled maps to a tangent map.
Moreover, we obtain the 
following structure theorem for tangent maps of biharmonic maps.
\begin{thm}\label{theo:tangent_maps}
  Assume that $u\in W^{2,2}(B_2^+,N)$ is a weakly biharmonic map that
  satisfies the stationarity condition~\eqref{stationary-bi} and
  satisfies $(u,Du)=(g,Dg)$ on $T_2$ in the sense of traces. 
  Moreover, we assume that $N$ does not carry any nontrivial 
  Paneitz-biharmonic $4$-spheres nor Paneitz-biharmonic
  $4$-halfspheres with constant boundary data.
  Then the following statements are true:
  \begin{enumerate}
  \item For any $a\in T_1$ and any sequence $r_i\searrow 0$ there is a
    tangent map $v\in W^{2,2}_{loc}(\R^m_+,N)$ so that $u_{a,r_i}$
    subconverges to $v$ in $W^{2,2}_{loc}(\R^m_+,N)$ as
    $i\to\infty$. The tangent map is again weakly biharmonic,
    satisfies \eqref{stationary-bi} and attains constant Dirichlet
    boundary values on $\partial\R^m_+$.
  \item  Every tangent
    map of $u$ in a point $a\in T_1$ is homogeneous of degree zero.
  \item Let $s\ge 0$. At
    $\mathcal{H}^{s}$-a.e. point $a\in\sing(u)\cap T_1$ there exists a
    tangent map $v$ of $u$ with
    $\mathcal{H}^{s}(\sing(v)\cap T_1)>0$.
  \end{enumerate}
\end{thm}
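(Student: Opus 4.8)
I would prove the three assertions in turn, transplanting the interior tangent-map analysis of \cite{Scheven1} (and, in the harmonic case, \cite{Lin}) to the boundary, with the Compactness Theorem~\ref{thm:compact} and the boundary monotonicity formula of Theorem~\ref{thm:monotonicity} as the two key inputs. For (i), fixing $a\in T_1$ and $r_i\searrow0$, I would pass to the rescalings $u_{a,r_i}(x)=u(a+r_ix)$; by scaling invariance these are again weakly biharmonic and satisfy \eqref{stationary-bi} on $B_{1/r_i}^+$, with boundary data $g_{a,r_i}(x)=g(a+r_ix)$ on the flat part satisfying $\|Dg_{a,r_i}\|_{C^3}\le r_i\|Dg\|_{C^3}\to0$, so that $g_{a,r_i}\to g(a)=:c_a$ in $C^4_{\mathrm{loc}}(\R^m_+)$. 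Lemma~\ref{lemma:morrey} bounds $\|D^2u_{a,r_i}\|_{L^{2,m-4}(B_R^+)}+\|Du_{a,r_i}\|_{L^{2,m-2}(B_R^+)}$ uniformly on every fixed half-ball $B_R^+$, so after rescaling to $B_4^+$ Theorem~\ref{thm:compact} applies; a diagonal argument over $R\to\infty$ yields a subsequence with $u_{a,r_i}\to v$ strongly in $W^{2,2}_{\mathrm{loc}}(\R^m_+,N)$. Since \eqref{weakly-bi}, \eqref{stationary-bi} and the trace relation are all stable under strong $W^{2,2}$-convergence, the limit $v$ is a tangent map that is weakly biharmonic, satisfies \eqref{stationary-bi}, and has $(v,Dv)=(c_a,0)$ on $\partial\R^m_+$.

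For (ii), I would recall from Corollary~\ref{cor:density} that the radial averages of $\Phi_u(a;\cdot)$ converge to a value $\Theta_u(a)$ as the radius tends to $0$. Since $\Phi$ is scale invariant up to the exponential factor $e^{\chi r}$, whose constant $\chi$ vanishes as $\|Dg\|_{C^2}\to0$, the corresponding averages for $u_{a,r_i}$ at the origin converge to $\Theta_u(a)$ as well for every fixed radius, and the strong $W^{2,2}_{\mathrm{loc}}$-convergence of (i) identifies these with the averages of the boundary-monotone quantity $\Phi_v(0;\cdot)$ of $v$. Because $v$ carries constant boundary data, Theorem~\ref{thm:monotonicity} applies to $v$ with $\chi=K=0$, so $\Phi_v(0;\cdot)$ is genuinely monotone; together with the previous identity this forces $\Phi_v(0;s)=\Theta_u(a)$ for a.e. $s>0$. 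Feeding this back into the degenerate monotonicity inequality $\Phi_v(0;\rho)+4\int_{B_r^+\setminus B_\rho^+}\big(|x|^{2-m}|D\partial_Xv|^2+(m-2)|x|^{-m}|\partial_Xv|^2\big)\,\d x\le\Phi_v(0;r)$ forces the middle integral to vanish for a.e. $0<\rho<r$, hence $\partial_Xv\equiv0$ on $\R^m_+$, i.e. $v$ is homogeneous of degree zero.

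For (iii), I would argue by contradiction along the lines of Federer's dimension reduction. One may assume $\mathcal{H}^s(\sing(u)\cap T_1)>0$, the claim being vacuous otherwise. Suppose the set $S_\ast$ of points $a\in\sing(u)\cap T_1$ at which \emph{every} tangent map $v$ has $\mathcal{H}^s(\sing(v)\cap T_1)=0$ satisfies $\mathcal{H}^s(S_\ast)>0$; after checking that $S_\ast$ is $\mathcal{H}^s$-measurable I would fix a compact $K\subset S_\ast$ with $0<\mathcal{H}^s(K)<\infty$ and, passing to a further subset of positive measure, also $\mathcal{H}^s(K\cap B_\rho(y))\le C\rho^s$ for all $y\in\R^m$, $\rho>0$. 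At $\mathcal{H}^s$-a.e. $a\in K$ the lower $s$-density bound supplies $r_i\searrow0$ with $\mathcal{H}^s(K\cap B_{r_i}(a))\ge c_0r_i^s$; along a subsequence I would obtain simultaneously a tangent map $u_{a,r_i}\to v$ (by (i)) and a weak-$\ast$ limit $\mu_i:=r_i^{-s}\,\mathcal{H}^s\edge K\,(a+r_i\,\cdot)\wsto\mu$, where $\mu$ is a Radon measure on $\overline{\R^m_+}$ with $\mu(\overline B_1)\ge c_0$ and $\mu(B_\rho(y))\le C\rho^s$ for all $y,\rho$. The upper bound yields $\mu\le C\mathcal{H}^s$, so $\mathcal{H}^s(\spt\mu\cap\overline B_1)>0$. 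Conversely, if $b\notin\sing(v)$ then $v$ is smooth near $b$, hence its scaled energy there is small, and Corollary~\ref{cor:uniform} makes $u_{a,r_i}$ smooth near $b$ for large $i$; since $K\subset\sing(u)$, this forces $\mu_i$ to vanish near $b$, so $b\notin\spt\mu$. Thus $\spt\mu\subset\sing(v)\cap\partial\R^m_+$ (the inclusion in $\partial\R^m_+$ being immediate from $K\subset T_1$), so $\mathcal{H}^s(\sing(v)\cap\partial\R^m_+)>0$; since $\sing(v)$ is a cone by (ii), also $\mathcal{H}^s(\sing(v)\cap T_1)>0$, contradicting $a\in S_\ast$. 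Hence $\mathcal{H}^s(S_\ast)=0$, which is (iii).

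The main obstacle, I expect, is concentrated in (iii): parts (i)–(ii) are essentially bookkeeping once the compactness theorem and the monotonicity formula are available, the one delicate point in (ii) being that $\Phi$ passes to the rescaled maps and to the strong $W^{2,2}$-limit only in a radially averaged sense, since its sphere integrals converge merely for a.e. radius. In (iii) the genuine work is to couple the blow-up of the Hausdorff weight $\mathcal{H}^s\edge K$ with the blow-up $u_{a,r_i}\to v$, to reduce $K$ beforehand to a uniformly $s$-upper-regular subset so that the blown-up measure $\mu$ charges $\sing(v)$ with positive $\mathcal{H}^s$-measure, and to handle the measurability of $S_\ast$.
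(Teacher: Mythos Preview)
Your proofs of (i) and (ii) are correct and essentially identical to the paper's: compactness plus stability under strong $W^{2,2}$-convergence for (i), and the fact that for a tangent map with constant boundary data the averaged $\Phi_v(0;\cdot)$ is both monotone (since $\chi=K=0$) and independent of the radius (since it equals $\lim_{S\downarrow0}\mint_{S/2}^S\Phi_u(a;\sigma)\,\d\sigma$), forcing the radial-derivative integral in the monotonicity formula to vanish, for (ii). Your remark about the sphere integrals converging only for a.e.\ radius is exactly the reason radial averaging is used.

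For (iii) your argument is sound in outline but takes a detour the paper avoids. The paper does not introduce $S_\ast$, does not need its measurability, and does not blow up the measure $\mathcal H^s\edge K$. Instead it uses only the standard density fact (for $\mathcal H^s$-a.e.\ $a\in\sing(u)\cap T_1$ there is $r_i\searrow0$ with $r_i^{-s}\mathcal H^s(\sing(u)\cap T_{r_i}(a))\ge 2^{-s}\alpha(s)$), passes to a subsequence along which $u_{a,r_i}\to v$, and argues by contradiction: if $\mathcal H^s(\sing(v)\cap T_1)=0$, one covers $\sing(v)\cap T_1$ by balls of total $s$-weight $<\eps$, then uses $\eps$-regularity to show $\sing(u_{a,r_i})\cap T_1$ is eventually contained in this cover, contradicting the density bound for small $\eps$. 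This dispatches (iii) in a few lines. Your route through measure blow-up works too, but the measurability of $S_\ast$ (defined by a universal quantifier over tangent maps) and the reduction to a compact upper-$s$-regular $K$ are genuine extra steps that buy nothing here; the paper's argument needs only the density of $\sing(u)$, not of $S_\ast$.
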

\begin{proof}
  The strong subconvergence $u_{a,r_i}\to v$ to some 
  tangent map $v\in W^{2,2}_{\mathrm{loc}}(\R^m_+,N)$ 
  is a consequence of the Compactness Theorem \ref{thm:compact}.
  The differential equations \eqref{weakly-bi} and
  \eqref{stationary-bi} are preserved under strong convergence, which
  yields the statements on the biharmonicity of the tangent
  map. The boundary values of the rescaled maps $u_{a,r_i}$ are given
  by the maps $g_{a,r_i}$, which satisfy
  \begin{equation*}
    \sup_{B_{1/r}^+}\big(|D^2g_{a,r_i}|+|Dg_{a,r_i}|\big)
    \le
    r_i\|g\|_{C^2}\to0,
    \qquad\mbox{as }i\to\infty.    
  \end{equation*}
  This implies $g_{a,r_i}\to c$ for a constant $c\in N$ and proves
  that the tangent map attains constant boundary values. This
  completes the proof of (i).

  Our next goal is to show (ii). To this end, let  
  $v$ be a tangent map of $u$ in a point $a\in T_1$. 
  Since $v$ possesses constant boundary values, the 
  monotonicity formula from Theorem~\ref{thm:monotonicity} with center
  $a=0$ reads
  \begin{equation}\label{v_monoton}
    4\int_{B_r^+\setminus B_\rho^+}\left(\frac{|D\,\partial_X
  v|^2}{|x|^{m-2}}+(m-2)\frac{|\partial_X v|^2}{|x|^m}\right)\d x
  \le
  \Phi_v(0,r)-\Phi_v(0,\rho)
  \end{equation}
  for a.e. $0<\rho<r$, where $X(x)=x$.
  The strong convergence $u_{a,r_i}\to v$ implies that the functions
  $f_i(\rho):=\Phi_{u_{a,r_i}}(0,\rho)$ converge in
  $L^1_{\mathrm{loc}}([0,\infty))$ to $f(\rho):=\Phi_v(0,\rho)$, as
  $i\to\infty$. Therefore, for any $R>0$ we deduce 
  \begin{align*}
    \mint_{R/2}^R \Phi_v(0,\rho)\,\d\rho
    &=
    \lim_{i\to\infty}\,\mint_{R/2}^R\Phi_{u_{a,r_i}}(0,\rho)\,\d\rho\\
    &=
    \lim_{i\to\infty}\,\mint_{R/2}^R\Phi_{u}(a,\rho r_i)\,\d\rho
    =
    \lim_{S\downarrow0}\,\mint_{S/2}^S\Phi_{u}(a,\sigma)\,\d\sigma,
  \end{align*}
  since the last limit exists by Corollary \ref{cor:density}.
  We deduce that the left-hand side does not depend on
  $R>0$. Therefore, inequality~\eqref{v_monoton} implies
  \begin{align*}
    \int_{B_{R/2}^+}\left(\frac{|D\,\partial_X
        v|^2}{|x|^{m-2}}+(m-2)\frac{|\partial_X v|^2}{|x|^m}\right)\d x
    =0
  \end{align*}
  for any $R>0$, which is only possible if $\partial_Xv=0$. This
  yields the asserted homogeneity of the tangent map.
  
  For the proof of the last statement (iii), 
  we recall a standard result on densities of sets, which states that
  for $\mathcal{H}^s$-a.e. point $a\in\sing(u)\cap T_1$ there is
  a sequence $r_i\searrow 0$ with
  \begin{equation}\label{pos_dens}
    \lim_{i\to\infty}r_i^{-s}\mathcal{H}^s(\sing(u)\cap T_{r_i}(a))\ge 2^{-s}\alpha(s),
  \end{equation}
  where $\alpha(s):=\Gamma(\frac12)^s/\Gamma(\frac s2+1)$, cf. Remark~3.7 in
  \cite{Simon}.
  As shown above, after passing to a subsequence, the rescaled maps $u_{a,r_i}$ converge to a tangent map 
  $v\in W^{2,2}_{\mathrm{loc}}(\R^m_+,N)$ locally with respect to the
  $W^{2,2}$-norm.
  Let us assume for contradiction that $\mathcal{H}^s(\sing(v)\cap T_1)=0$.
  Then for any $\eps>0$ there is a cover 
  $\cup_{j\in\N}B_j\supset\sing(v)\cap T_1$ of open balls $B_j$ with radii $\rho_j$ so that
  \begin{equation}
    \label{eps_cover}
    \sum_{j\in\N}\rho_j^s<\eps.
  \end{equation}
  We claim that $\sing(u_{a,r_i})\cap T_1\subset U_\eps:=\cup_jB_j$ for all but finitely many 
  values of $i\in\N$. If this was not the case, after 
  passing to a subsequence we could choose singular boundary points 
  $p_i\in\sing(u_{a,r_i})\cap T_1$ with $p_i\to p\not\in\sing(v)$ as $i\to\infty$. 
  Since $v$ is smooth in a neighbourhood of $p$, we have 
  \begin{equation*}
    r^{4-m}\int_{B_r^+(p)}\big(|\Delta v|^2+r^{-2}|Dv|^2\big)\d x+r<2^{2-m}\eps_1
  \end{equation*}
  for all sufficiently small radii $r\in(0,\delta)$, where $\eps_1>0$
  denotes the constant from the Regularity Theorem~\ref{epsreg} for
  the case of constant boundary values. 
  The preceding inequality implies for sufficiently large values of $i>i_0(r)$ that
    \begin{eqnarray*}
     \left(\tfrac
       r2\right)^{4-m}\int_{B_{r/2}^+(p_i)}\big(|\Delta(u_{a,r_i})|^2+\left(\tfrac
      r2\right)^{-2}|D(u_{a,r_i})|^2\big)\d x
      +\tfrac r2<\eps_1     
    \end{eqnarray*}
    holds true, since we can in particular assume
    $B_{r/2}^+(p_i)\subset B_r^+(p)$. Theorem~\ref{epsreg} therefore
    implies $p_i\not\in\sing(u_{a,r_i})$, which is a contradiction.  
    We conclude that $\sing(u_{a,r_i})\cap T_1\subset U_\eps$ holds true for all sufficiently large
    $i\in\N$. But this means
  \begin{equation*}
    r_i^{-s}\mathcal{H}^s(\sing(u)\cap T_{r_i}(a))=\mathcal{H}^s(\sing(u_{a,r_i})\cap T_1)
                   \le \alpha(s)\sum_{j\in\N}\rho_j^s<\alpha(s)\eps
  \end{equation*}
  for all sufficiently large $i\in\N$, where we used \eqref{eps_cover}
  in the second last step. By choosing $\eps:=2^{-s-1}$, we
  achieve a contradiction to the density condition 
  \eqref{pos_dens}, which completes the proof of the theorem.
\end{proof}

\subsection{Proof of Theorem~\ref{thm:boundary-regularity}}
Having established the properties of the tangent maps gathered in the preceding
theorem, the proof of Theorem~\ref{thm:boundary-regularity} follows in a standard way from the 
dimension reduction principle due to Federer, see for example 
\cite{Simon}, Thm. A.4. We briefly sketch the remainder of the proof.

Let $d\in\N_0$ be the smallest natural number with the property that
\begin{equation}\label{def:d}
  \Hdim(\sing u\cap T_1)\le d
\end{equation}
holds true for every weakly biharmonic map $u\in W^{2,2}(B_1^+,N)$
with~\eqref{stationary-bi} and smooth boundary values $g\in
C^\infty(B_1^+,N)$ on $T_1$. Here, $\Hdim$ denotes the Hausdorff
dimension.
We assume for contradiction that there are weakly biharmonic maps
with~\eqref{stationary-bi} and smooth boundary values that have
singular boundary points. Then by definition of $d$, we can find such a weakly
biharmonic map $u\in W^{2,2}(B_1^+,N)$ and some $s\in(d-1,d]$
with $\mathcal{H}^s(\sing u\cap T_1)>0$. By Theorem
\ref{theo:tangent_maps} there exists a tangent map $v_0$ of $u$ in a
singular boundary point $a\in \sing u\cap T_1$ that satisfies
$\mathcal{H}^s(\sing v_0\cap T_1)>0$ as well. Moreover, $v_0$ possesses
constant boundary values and is homogeneous of degree
zero. Unfortunately, we do not know whether $v_0$ is smooth in a
neighbourhood of $\partial\R^m_+\setminus\{0\}$, so that
Theorem~\ref{thm:Liouville} is not applicable. Therefore, in the
case $d>0$, we repeat the procedure and construct in turn a tangent map $v_1$ of $v_0$ in a
point $b\in \sing v_0\cap T_1\setminus\{0\}$ that again satisfies
$\mathcal{H}^s(\sing v_1\cap T_1)>0$. Moreover, this second tangent
map is homogeneous of degree zero and satisfies moreover
$\partial_bv_1\equiv0$. This implies in particular that the singular set
contains the one-dimensional subspace $\R
b\subset\partial\R^m_+$. We repeat the last step $d$ times to
successively construct tangent maps $v_1,v_2,\ldots,v_d$ whose
singular sets contain subspaces of increasingly higher dimension. The result
is a tangent map $v_d\in W^{2,2}(\R^m_+,N)$ whose singular set contains
a $d$-dimensional linear subspace $V\subset\partial\R^m_+$. After a
rotation, we may assume that $\sing v_d\supset \R^d\times\{0\}$. Moreover,
by construction, the map $v_d$ is weakly biharmonic, satisfies
\eqref{stationary-bi}, has constant boundary values on $T_1$,
is homogeneous of degree zero and constant in direction of $\R^d\times\{0\}$.
By the definition of $d$ according to \eqref{def:d}, we know
\begin{equation*}
  \sing v_d\cap \partial\R^m_+= \R^d\times\{0\}.
\end{equation*}
In view of the partial regularity result
from Corollary \ref{cor:partial}, 
this is only possible if $d\le m-5$.
We consider the restriction $\tilde
v:=v_d|_{\{0\}\times \R^{m-d}_+}\in W^{2,2}_{\mathrm{loc}}(\R^{m-d}_+,N)$.
A standard cut-off argument implies that $\tilde
v$ is stationary biharmonic on $\R^{m-d}_+$, cf. \cite[Lemma 4.3]{Scheven1}. Moreover, $\tilde v$ has
constant boundary values in the sense of $(u,Du)=(c,0)$ on
$\partial\R^{m-d}_+$, and $\tilde v$ is homogeneous of degree
zero. Moreover, by construction we have
$\sing \tilde v\cap \partial\R^{m-d}_+=\{0\}$, which means
that $\tilde v$ is smooth on a neighbourhood of
$\partial\R^{m-d}_+\setminus\{0\}$.
In the case $d\le m-6$, we may therefore apply Theorem \ref{thm:Liouville},
which implies that $\tilde v$ is constant. If $d=m-5$, the partial regularity
result from Corollary~\ref{cor:partial} and the homogeneity of $\tilde v$
imply that $\tilde v$ is smooth on $\R^{m-d}_+$. Therefore, 
our assumptions and Lemma~\ref{lem:liouville-4} again yield that $\tilde v$
is a constant map.
In any case, we obtain a contradiction to $\sing(\tilde v)\cap\partial\R^{m-d}_+=\{0\}$.
We conclude that our assumption must have been false, i.e. $\sing
u\cap T_1=\varnothing$ holds for any weakly biharmonic map $u\in
W^{2,2}(B_1^+,N)$ with~\eqref{stationary-bi} and smooth boundary
values. This completes the proof of
Theorem~\ref{thm:boundary-regularity}. 
\hfill\qed

\end{document}